\renewenvironment{proof}[1][\proofname] {\par\pushQED{\qed}\normalfont\topsep6\p@\@plus6\p@\relax\trivlist\item[\hskip\labelsep\bfseries#1\@addpunct{.}]\ignorespaces}{\popQED\endtrivlist\@endpefalse}
\renewcommand{\AA}{\mathcal{A}}
\newcommand{\CC}{\mathcal{C}}
\newcommand{\JJ}{\mathcal{J}}
\newcommand{\RR}{\mathbb{R}}
\newcommand{\SO}{\operatorname{SO}}
\newcommand{\Sph}{\mathbb{S}}
\newtheorem{proposition}{Proposition}[section]
\newtheorem{lemma}[proposition]{Lemma}
\newtheorem{theorem}[proposition]{Theorem}
\newtheorem{question}[proposition]{Question}
\theoremstyle{definition}
\newtheorem*{remark*}{Remark}
\newtheorem*{theorem*}{Theorem}
\DeclareMathSymbol{\lsb@l}{\mathalpha}{letters}{`l}
\title{Rotation inside convex Kakeya sets}
\author{Barnabás Janzer\thanks{Department of Pure Mathematics and Mathematical Statistics, University of Cambridge, Wilberforce Road, Cambridge CB3 0WB, United Kingdom. Email: bkj21@cam.ac.uk. This work was supported by EPSRC DTG.}
}
\date{\vspace{-21pt}}
\begin{document}
	\maketitle
\begin{abstract}
	Let $K$ be a convex body (a compact convex set) in $\RR^d$, that contains a copy of another body $S$ in every possible orientation. Is it always possible to continuously move any one copy of $S$ into another, inside $K$? As a stronger question, is it always possible to continuously select, for each orientation, one copy of $S$ in that orientation? These questions were asked by Croft.
	
	We show that, in two dimensions, the stronger question always has an affirmative answer. We also show that in three dimensions the answer is negative, even for the case when $S$ is a line segment -- but that in any dimension the first question has a positive answer when $S$ is a line segment. And we prove that, surprisingly, the answer to the first question is negative in dimension four for general $S$.
\end{abstract}
\section{Introduction}
A subset $K$ of $\RR^d$ is called a \emph{Kakeya set} (or \emph{Besicovitch set}) if it contains a unit segment in all directions, i.e., whenever $v\in \Sph^{d-1}$ then there is some $w\in K$ such that $w+tv\in K$ for all $t\in[0,1]$. Some of the earliest results about Kakeya sets were proved by Besicovitch \cite{besicovitch1,besicovitch1928kakeya}, who proved that there exist Kakeya sets of measure zero, and also showed that there are (Kakeya) sets in $\RR^2$ of arbitrarily small measure in which a unit segment can be continuously moved and rotated around by $360^\circ$. Since then there has been a lot of interest in Kakeya sets and related problems, see, e.g., \cite{davies_1971,bourgain1999dimension,kolasa1999some,katz2002new}. The study of Kakeya sets is connected to surprisingly many different areas of mathematics, including harmonic analysis, arithmetic combinatorics and PDEs (see, e.g.,~\cite{fefferman1971multiplier,bourgain1999dimension}). One of the most interesting open problems about Kakeya sets is the Kakeya conjecture, which claims that if $K$ is a compact Kakeya set in $\RR^d$, then $K$ has (Hausdorff) dimension $d$ (see, e.g., \cite{bourgain1999dimension}).


While the conjecture above is probably the most important open problem in the area, there has also been much interest in questions about Kakeya sets that are more similar to the original problem, and study when we can rotate a unit segment around inside another body. For example, van~Alphen~\cite{vanalphen} showed that it is possible to construct sets of arbitrarily small area and bounded diameter in $\RR^2$ in which a segment can be rotated around. Cunningham~\cite{cunningham1971kakeya} showed that such a set can even be made simply connected. Csörnyei, Héra and Laczkovich~\cite{csornyei2017closed} showed that if $S$ is a closed and connected set in $\RR^2$ such that any two copies of $S$ can be moved into each other within a set of arbitrarily small measure, then $S$ must be a segment, a circular arc, or a singleton. Järvenpää, Järvenpää, Keleti and Máthé~\cite{jarvenpaa2011continuously} proved that for $n\geq 3$ it is possible to move a line around within a set of measure zero in $\RR^n$ such that all directions are traversed; however, if $K\subseteq \RR^n$ is  such that we can choose a copy of a line in each direction simultaneously in a continuous way (parametrized by $\Sph^{n-1}$), then the complement of $K$ must be bounded. There is a very large literature on Kakeya sets, and many other interesting problems have been studied, see, e.g.,~\cite{dvir2009size,hera2016kakeya,falconer1980continuity}.

As hinted above, several results about Kakeya sets concern the stronger property of being able to continuously move and rotate around a segment (or some other set), as opposed to simply containing a segment in each direction (i.e., being Kakeya). It is then interesting to ask how strong the former property is compared to the latter: can we make some additional, natural assumption on our set such that the second property implies the first one? Without any such assumptions, being Kakeya does not imply the first property -- for example, our set could consist of two, disconnected components that together cover all possible orientations of segments. It is also easy to see that being connected is not enough -- but what happens if our set is \emph{convex}? This question, in the following more general form, was asked by Croft~\cite{hallardcroft}.

\begin{question}[Croft \cite{hallardcroft}]\label{question_kakeya}
	If $K$ is a convex and compact set in $\RR^d$ that contains a copy of $S\subseteq \RR^d$ in every possible orientation, 
	is it necessarily possible to continuously transform any given copy of $S$ into any other one within $K$?
\end{question}

While it is very natural to study \emph{convex} Kakeya sets, and they were already considered over a hundred years ago by Pál~\cite{pal1921minimumproblem} (who proved that the minimal possible area of a convex Kakeya set in $\RR^2$ is $1/\sqrt{3}$), it is important to point out that the question above is of a different flavour. Indeed, apart from focusing on convex sets, a significant difference between Question~\ref{question_kakeya} and most of the known results about Kakeya sets is that here we are not interested in the measure of our Kakeya set (unlike in the papers mentioned earlier).

To formalise Question~\ref{question_kakeya}, we first need some definitions. For any set $S\subseteq \RR^d$, let us say that $K\subseteq \RR^d$ is \emph{$S$-Kakeya} if $K$ contains a translate of any rotated copy of $S$, i.e., whenever $\rho\in \SO(d)$ then there is some $w\in \RR^d$ such that $\rho(S)+w\subseteq K$. In particular, when $S$ is a segment of length $1$ then this is just the usual notion of being a Kakeya set. 
Let us also say that \emph{any two $S$-copies can be rotated into each other within $K$} if whenever $\rho_0,\rho_1\in \SO(d)$ and $w_0,w_1\in \RR^d$ are such that $\rho_i(S)+w_i\subseteq K$ ($i=0,1$), then there are some $\gamma:[0,1]\to \SO(d)$ and $\delta: [0,1]\to \RR^d$ continuous functions such that $\gamma(i)=\rho_i, \delta(i)=w_i$ for $i=0,1$ and $\gamma(t)(S)+\delta(t)\subseteq K$ for all $t$.

We mention that instead of having continuous $\gamma, \delta$ as above, we could define this notion in terms of a single continuous function $\psi$ mapping each $t\in [0,1]$ to a (rotated and translated) copy of $S$ in a continuous way (with respect to the Hausdorff metric), our results below still hold in this alternative characterisation. Furthermore, in the case of usual Kakeya sets (i.e., when $S$ is a unit segment), we can also parametrize the possible orientations of segments by the sphere $\Sph^{d-1}$ or by the projective space $\mathbb{P}\RR^d$ instead of $\SO(d)$, but these changes would make no difference.\medskip

Our first result shows that in the case of usual Kakeya sets, any two unit segments can be rotated into each other within $K$ (if $K$ is convex and compact).
\begin{theorem}\label{theorem_Kakeyasegments}
	Let $d\geq 2$ be a positive integer and let $K$ be a convex Kakeya body in $\RR^d$. Then any two unit segments can be rotated into each other within $K$.
\end{theorem}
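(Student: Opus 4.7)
The plan is to reformulate the theorem as a path-connectedness question. Identify the space of (oriented) unit segments in $K$ with $X = \{(w, v) \in K \times \Sph^{d-1} : w + v \in K\}$; by convexity of $K$, the condition $w, w+v \in K$ is equivalent to $w + [0,1]v \subseteq K$. Since the map $\SO(d) \to \Sph^{d-1}$, $\rho \mapsto \rho(e_1)$, is a principal bundle with fibre $\SO(d-1)$ (path-connected for $d \geq 2$), the theorem is equivalent to showing that $X$ itself is path-connected. The projection $\pi : X \to \Sph^{d-1}$ is surjective by the Kakeya property, and its fibres $K_v := K \cap (K - v)$ are closed, convex and non-empty, hence path-connected. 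Therefore the path-components of $X$ project to a partition of $\Sph^{d-1}$, and by connectedness of the sphere it suffices to show that every $v_0 \in \Sph^{d-1}$ has a neighbourhood $U$ with $\pi^{-1}(U)$ in a single path-component of $X$.

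For the local connectedness around a given $v_0$, the first natural attempt is to find a pivot point $p \in K_{v_0}$ with $p + v_0$ in the interior of $K$. Then for all $v$ close enough to $v_0$ one still has $p + v \in \mathrm{int}(K)$, so $p \in K_v$, and the family $\{[p, p+v(t)]\}$ gives a continuous rotation about $p$ as $v(t)$ varies in $U$; combining this with convex interpolation inside each fibre $K_v$ yields path-connectedness of $\pi^{-1}(U)$. Such a pivot $p$ exists precisely when the convex set $K_{v_0}$ has non-empty interior in $\RR^d$, which is the generic situation.

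The main obstacle is the degenerate case in which no such pivot exists: that is, every unit segment of direction $v_0$ contained in $K$ has both endpoints on $\partial K$. The archetype is Pál's equilateral triangle of height $1$ in $\RR^2$, where the (unique) vertical unit segment runs from the midpoint of the base to the apex, with both endpoints on $\partial K$. In this situation one cannot rotate around a fixed pivot, and both endpoints of the segment must be moved simultaneously as the direction changes. The heart of the proof is therefore to show that local path-connectedness persists in this degenerate situation: via a careful analysis of $\partial K$ near the endpoints of a rigid $v_0$-segment, using supporting hyperplanes together with the convexity of $K$, one would construct for each $v$ in a neighbourhood of $v_0$ a unit segment $[w(v), w(v)+v] \subseteq K$ depending continuously on $v$ and reducing at $v = v_0$ to a prescribed rigid segment. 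Producing this local continuous selection in the degenerate case is the main technical difficulty; once it is in hand, combining it with the generic case and the convexity of the fibres completes the gluing argument and yields the theorem.
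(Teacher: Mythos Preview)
Your reduction to path-connectedness of $X$ and the treatment of the generic case (when $K_{v_0}$ has non-empty interior) are fine and match the paper's setup. The genuine gap is in your handling of the degenerate case. You propose to produce, for each $v$ in a neighbourhood of $v_0$, a segment $[w(v),w(v)+v]\subseteq K$ depending \emph{continuously} on $v$. But such a local continuous section of $\pi$ need not exist when $d\geq 3$: the paper's Theorem~\ref{theorem_continuouscounterexample} gives a convex Kakeya body $K\subseteq\RR^3$ and a direction $v_0=(0,1,0)$ at which $I_{v_0}$ is a segment (so exactly your degenerate case), yet for directions $v=(0,\cos\varphi,\sin\varphi)$ with $\varphi\neq 0$ the fibre $I_v$ is a \emph{single point}, and these unique points approach two different limits as $\varphi\to 0^+$ and $\varphi\to 0^-$. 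So no continuous $w(v)$ can exist on any neighbourhood of $v_0$, and the ``main technical difficulty'' you defer is in fact impossible to resolve in the way you suggest.

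The paper avoids this obstruction by not attempting a continuous selection. It refines your dichotomy into three cases according to whether $I_v$ has non-empty interior, is a single point, or is in between. The key geometric input (Lemma~\ref{lemma_spheresaroundtypeline}) is that in the third case, in all but a codimension-one set of directions out of $v$ on the sphere, one immediately enters the region where $I_p$ has non-empty interior; moreover those $p$ are reachable from $v$. One can then prove reachability along any smooth path whose tangent, at every such bad point, avoids that codimension-one set of forbidden directions (Lemma~\ref{lemma_pathsavoidingbadcase}). A Baire-category argument in local charts (Lemmas~\ref{lemma_takingchartsnew}--\ref{lemma_chartsconvenient}) shows that an arbitrary path can be perturbed, as little as one likes, into one satisfying this tangent condition. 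So the paper proves reachability along paths $\epsilon$-close to any given $\gamma$, which is strictly weaker than a local section but is exactly what is needed---and is all that is true.
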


Given Theorem~\ref{theorem_Kakeyasegments}, one might expect that the corresponding statement is in fact true for any set $S$. Surprisingly, this is not the case.

\begin{restatable}{theorem}{fourdimcounterexample}\label{theorem_counterexamplereachable_introduction}
	There exist convex bodies $S$ and $K$ in $\RR^4$ such that $K$ is $S$-Kakeya but there are two $S$-copies which cannot be rotated into each other within $K$.
\end{restatable}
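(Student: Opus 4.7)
The plan is to construct explicit convex bodies $S, K \subset \RR^4$ and prove both that (i) $K$ is $S$-Kakeya and (ii) the configuration space
\[
\mathcal{M}(S,K) = \{(\rho, w) \in \SO(4) \times \RR^4 : \rho(S) + w \subseteq K\}
\]
fails to be path-connected. The phenomenon must be topologically subtle: each fibre $C(\rho) = K \ominus \rho(S)$ of the projection $\pi: \mathcal{M}(S,K) \to \SO(4)$ is nonempty and convex, hence connected, and the correspondence $\rho \mapsto C(\rho)$ is upper hemicontinuous. A standard tube-lemma argument then shows $\mathcal{M}(S,K)$ itself must be connected as a subspace of $\SO(4) \times \RR^4$, so the counterexample has to realise an $\mathcal{M}(S,K)$ that is connected yet not path-connected---a topologist's-sine-curve-like pathology produced via convex geometry in four dimensions.

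For the construction, I would choose $S$ so that its stabiliser in $\SO(4)$ is a nontrivial continuous subgroup exploiting four-dimension-specific structure. A natural candidate is $H = \SO(2)\times \SO(2)$ with $S = D_1 \times D_2 \subset \RR^2 \oplus \RR^2$ a product of two disks of distinct radii, so that the effective orientation space is the Grassmannian $\SO(4)/H \cong S^2 \times S^2$. The isoclinic splitting $\SO(4) \cong (\operatorname{Spin}(3)\times \operatorname{Spin}(3))/\ZZ_2$ provides the extra freedom: left-isoclinic and right-isoclinic placements of $S$ inside $K$ can be arranged to sit in inequivalent families. Then $K$ would be built as an intersection of a small number of carefully tuned convex constraints so that (a) for every orientation $\rho$ there is at least one admissible position of $S$ inside $K$ (the Kakeya condition), while (b) along some critical one-parameter family of orientations the admissible positions oscillate between two disjoint families in a way that no continuous path $[0,1] \to \mathcal{M}(S,K)$ can accommodate.

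Verification would proceed in three steps: first, exhibit the explicit bodies $S$ and $K$ together with a position function $w(\rho)$ witnessing the Kakeya property; second, identify two $S$-copies $\rho_i(S) + w_i$ ($i = 0, 1$) sitting in the two distinct families; third, argue by contradiction that no continuous path connects them, since such a path would either be forced through an orientation with no admissible position bridging the families, or to oscillate infinitely often on $[0, 1]$---ruled out by continuity of $\gamma$ and $\delta$.

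The main obstacle is the tension between the Kakeya requirement (which forces $K$ to be large enough to contain every rotation of $S$) and the topological separation (which forces $K$ to be tight enough to sustain the oscillating configuration). The use of dimension 4 is essential: in $\RR^2$ and $\RR^3$ the rotation groups and their relevant coset spaces are topologically too simple to support the necessary splitting, whereas the isoclinic structure of $\SO(4)$ makes a convex construction encoding the required pathology feasible.
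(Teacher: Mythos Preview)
Your high-level diagnosis is correct: the configuration space $\mathcal{M}(S,K)$ is forced to be connected (each fibre is convex, the correspondence is upper hemicontinuous), so the obstruction must be a connected-but-not-path-connected pathology of topologist's-sine-curve type. That is exactly what the paper builds. However, your proposal stops at a plan and never produces a construction, and the specific mechanism you suggest would not, as stated, generate the needed pathology.

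The paper's construction is quite different from your product-of-disks / isoclinic idea. They take $S$ to be (the convex hull of) two parallel $2$-spheres on the unit $3$-sphere, namely $\{v\in\Sph^3:\langle v,e_1\rangle=\pm 1/2\}$, and they confine $K$ to the solid cylinder $\{x_1^2+x_2^2\le 1\}$. Any slightly rotated copy $S_v$ meets the boundary cylinder $C=\{x_1^2+x_2^2=1\}$ in two antipodal pairs, so placing $S_v+w$ inside $K$ forces the last two coordinates of $w$ to lie in a prescribed set $A_p\subset\RR^2$ depending on where those antipodal pairs sit on $\Sph^1$. The whole proof then reduces to engineering a family $(A_p)_{p\in\Sph^1}$ of compact convex subsets of $\RR^2$ that (a) pairwise intersect, so every orientation admits a placement, yet (b) oscillate infinitely often between the three sides $T_1,T_2,T_3$ of a triangle as $p$ approaches a fixed point, with the full triangle inserted at dyadic scales to keep the family closed. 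Any continuous path of placements starting at $v=(1,0,0,0)$ would have to track a point common to infinitely many of the $T_i$, which does not exist.

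Two points in your sketch need correction. First, ``two disjoint families'' cannot work: if the admissible positions split into two disjoint pieces along some arc of orientations, upper hemicontinuity plus convexity of fibres would already give an orientation with empty fibre, violating the Kakeya condition. The paper instead uses \emph{three} pairwise-intersecting convex sets with empty common intersection---this is precisely why the transverse space must be at least $2$-dimensional (in $\RR^1$, pairwise-intersecting intervals have a common point by Helly), and that, not the isoclinic structure of $\SO(4)$, is the reason the construction lives in $\RR^4$. Second, the isoclinic splitting of $\SO(4)$ plays no role in the paper's argument; the relevant orientation parameter is effectively one-dimensional (a small arc in $\Sph^3$ near $e_1$), and all the work happens in the $2$-dimensional translation freedom orthogonal to the cylinder axis. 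Your proposal, as written, does not supply any concrete $K$, any verification of the Kakeya property, or any argument ruling out a continuous path, so it does not yet constitute a proof.
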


While the result above is stated for $d=4$, it is in fact easy to modify our construction to get a counterexample for any $d\geq 4$. 

We mention that, in contrast with Theorem~\ref{theorem_counterexamplereachable_introduction}, if we replace the assumption `$K$ compact' by `$K$ open', then an easy connectedness argument shows that any two copies can be rotated into each other.\medskip

An alternative way to interpret Question~\ref{question_kakeya} is to ask for a way to select a copy of $S$ (in an $S$-Kakeya set) in each direction simultaneously in a continuous way. That is, we want the stronger property that there exists a continuous map $f: \SO(d)\to \RR^d$ such that $\rho(S)+f(\rho)\subseteq K$ for all $\rho$. We show that this can be achieved for any shape in $2$ dimensions.
\begin{theorem}\label{theorem_continuousin2d}
	Let $K$ be a convex body in $\RR^2$ and let $S\subseteq \RR^2$, $S\not =\emptyset$. Assume that $K$ is $S$-Kakeya. Then there is a continuous map $f: \SO(2)\to \RR^2$ such that $\rho(S)+f(\rho)\subseteq K$ for all $\rho\in \SO(2)$.
\end{theorem}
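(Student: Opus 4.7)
My plan is to show that the correspondence $\rho \mapsto F(\rho) := \{w \in \RR^2 : \rho(S) + w \subseteq K\}$ is continuous with respect to the Hausdorff metric on non-empty compact convex subsets of $\RR^2$, and then to define $f(\rho)$ to be the Steiner point of $F(\rho)$. Since $K$ is closed and convex, $\rho(S) + w \subseteq K$ is equivalent to $\rho(\overline{\mathrm{conv}}(S)) + w \subseteq K$, so I may replace $S$ by the closure of its convex hull and assume $S$ is a non-empty compact convex set; then $F(\rho) = K \ominus \rho(S)$ (the Minkowski difference) is a non-empty (by the $S$-Kakeya hypothesis), compact, convex subset of $\RR^2$. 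The Steiner point of a compact convex set always lies in the set and depends Lipschitz-continuously on it in the Hausdorff metric, so once continuity of $\rho \mapsto F(\rho)$ is established, $f$ will be the required continuous section.

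Upper semi-continuity of $F$ is immediate: given $\rho_n \to \rho$ and $w_n \in F(\rho_n)$ with $w_n \to w$, passing to the limit in $\rho_n(s) + w_n \in K$ yields $\rho(s) + w \in K$ for every $s \in S$, hence $w \in F(\rho)$. For lower semi-continuity, given $w \in F(\rho)$ and $\rho_n \to \rho$, I need $w_n \in F(\rho_n)$ with $w_n \to w$. If $w$ lies in the interior of $F(\rho)$---equivalently, $\rho(S) + w$ is at positive distance from $\partial K$---then $\rho_n(S) + w$ remains inside $K$ for all large $n$, and we may simply take $w_n := w$. Whenever $F(\rho)$ has non-empty interior, arbitrary boundary points of $F(\rho)$ are handled by approximating them by interior points and diagonalising; and when $F(\rho) = \{w\}$ is a singleton, upper semi-continuity alone forces $F(\rho_n) \to \{w\}$, so any selection works.

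The main obstacle is the remaining case, in which $F(\rho)$ is a line segment of positive length. Were lower semi-continuity to fail here, Blaschke's selection theorem would produce a subsequence along which $F(\rho_{n_k}) \to F^*$ in Hausdorff metric; upper semi-continuity gives $F^* \subseteq F(\rho)$, and the hypothetical failure would force $F^* \subsetneq F(\rho)$. The task is to rule this out, that is, to prove $F^* = F(\rho)$. This is where the two-dimensional structure is essential: one has to exploit that $\partial K$ is a single simple closed convex curve, together with the fact that $K$ is $S$-Kakeya for \emph{every} rotation---not just $\rho$---to preclude an endpoint $w^*$ of the segment $F(\rho)$ being lost. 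Concretely, the pattern of contact of $\rho(S) + w^*$ with $\partial K$ should deform continuously as $\rho$ is perturbed, and combined with the non-emptiness of each $F(\rho_n)$ this should produce positions in $F(\rho_{n_k})$ clustering arbitrarily close to $w^*$. Since Theorem~\ref{theorem_counterexamplereachable_introduction} shows that the analogous selection statement can fail in dimension $d \geq 4$, some such 2-dimensional geometric input is genuinely needed; this step is the crux of the proof.
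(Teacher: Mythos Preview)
Your outline matches the paper's strategy almost exactly: the paper also proves that $\rho\mapsto I_\rho$ (your $F(\rho)$) is Hausdorff-continuous and then applies a continuous selection (Chebyshev centre rather than Steiner point, but either works). Upper semicontinuity, the full-dimensional case, and the singleton case are handled in the paper just as you sketch them.

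The gap is precisely where you flag it: the one-dimensional case. What you have written there is not a proof but a wish. The phrases ``the pattern of contact \dots\ should deform continuously'' and ``this should produce positions \dots\ clustering arbitrarily close to $w^*$'' carry no argument, and your suggestion that the \emph{full} $S$-Kakeya hypothesis (copies in every orientation) is what drives this step is off the mark---the paper in fact proves Hausdorff-continuity of $S'\mapsto I_{S'}$ for \emph{arbitrary} compact $S'$ with $I_{S'}\neq\emptyset$ (Lemma~\ref{lemma_2dallhausdorff}), using no rotational structure whatsoever.

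Here is what the paper actually does. Suppose, after a rigid motion, that $I_S\supseteq[-\delta,\delta]\times\{0\}$ and $I_S$ has empty interior. First (Lemma~\ref{lemma_2dsameprojections}) one shows that the vertical extents of $S$ and of $K$ coincide: if the top of $S$ were strictly below the top of $K$, then using the horizontal slack $[-\delta,\delta]$ and convexity of $K$ one could push $S$ upward by some $\epsilon>0$ and keep it inside $K$, contradicting that $I_S$ has empty interior. Now let $S'$ be $\eta$-close to $S$ in Hausdorff distance; shift $S'$ vertically by some $|z|<\eta$ so that its vertical extent lies inside that of $K$. The claim (Lemma~\ref{lemma_hausdorffclosecopy}) is that then $S'+(0,z)\subseteq K$, which produces a point of $I_{S'}$ within $O(\eta)$ of the origin, and by translation near any interior point of the segment $I_S$. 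To verify the claim, take $q=(x_1,y_1)\in S'$, choose $q'=(x_2,y_2)\in S$ with $|q-q'|<\eta$, and let $p=(x_0,y_0)\in S$ attain the maximal height. A direct computation shows that, for $\eta$ small enough, $q$ lies on the segment from $p$ to $q'$ after a horizontal shift by some $s\in[-\delta,\delta]$; since $p+(s,0),\,q'+(s,0)\in K$ and $K$ is convex, $q\in K$. A short compactness argument is needed to control the case $y_2\approx y_0$, where the slope of the segment $pq'$ could blow up. This explicit geometric step is the two-dimensional content you allude to, and it is what your proposal leaves undone.
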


Again, in light of Theorem~\ref{theorem_continuousin2d}, one might expect that the corresponding statement is true in higher dimensions too, at least when $S$ is a line segment. However, this strong property fails already when $d=3$, even when $S$ is a unit segment.
\begin{theorem}\label{theorem_continuouscounterexample}
	There exists a convex Kakeya body $K\subseteq \RR^3$ such that there is no continuous function $\psi: \Sph^2\to \RR^3$ satisfying $\psi(v)+tv\in K$ for all $v\in \Sph^2, t\in [0,1]$.
\end{theorem}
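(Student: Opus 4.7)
The plan is to construct a specific convex Kakeya body $K\subseteq \RR^3$ so that any hypothetical continuous selection $\psi$ would produce a nowhere-vanishing continuous tangent vector field on $\Sph^2$, contradicting the Hairy Ball Theorem.

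I would aim to design $K$ to be ``tight in every direction'': for every $v\in\Sph^2$, the convex set
\[ F(v) := \{w\in\RR^3 : w + tv \in K \text{ for all } t\in[0,1]\} \]
of valid basepoints should be forced to lie in a $2$-dimensional affine plane $P(v)$ perpendicular to $v$. The natural way to enforce this is to require $K$ to have width exactly $1$ in every direction, so that the projection of $F(v)$ onto the $v$-axis degenerates to a single point; candidates include non-spherical bodies of constant width $1$ (such as a Meissner-type body), or the convex hull of an explicitly designed family of curves.

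I would then fix a continuous reference map $\psi_0:\Sph^2\to\RR^3$ lying in $P(v)$ for every $v$ -- for instance, the unique point of $P(v)$ on the axis through the origin in direction $v$, given by $\psi_0(v)=-h_K(-v)\,v$, where $h_K$ denotes the support function of $K$. The key geometric property to arrange is that $\psi_0(v) \notin F(v)$ for every $v\in\Sph^2$, i.e.\ no valid basepoint sits on the central $v$-axis. Assuming such a $K$, any continuous selection $\psi$ satisfies $\psi(v),\psi_0(v)\in P(v)$, so $X(v):=\psi(v)-\psi_0(v) \in v^\perp \cong T_v\Sph^2$ gives a continuous section $X$ of $T\Sph^2$ that is nowhere zero by construction. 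The Hairy Ball Theorem ($\chi(\Sph^2)=2\neq 0$) rules out such a section, delivering the desired contradiction.

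The main obstacle is the construction itself. The body $K$ must be simultaneously convex, Kakeya, ``tight'' enough that $F(v)$ lies in a single perpendicular plane, and asymmetric enough that $F(v)$ consistently avoids the central axis for \emph{every} direction $v\in\Sph^2$. These constraints pull in different directions: the Kakeya property and convexity favour larger, more symmetric bodies (whose fibres tend to contain central-axis basepoints), whereas the exclusion condition demands a delicate asymmetric displacement valid for every direction simultaneously. I expect the final construction to take $K$ as the convex hull of an explicit twisted curve (or a carefully modified constant-width body), with a direct geometric verification of both the Kakeya property and the axis-avoidance condition over all of $\Sph^2$.
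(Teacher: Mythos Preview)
Your approach has a genuine gap: requiring $K$ to have width exactly $1$ in every direction forces $K$ to be a body of constant width, and such bodies \emph{always} admit a continuous selection, so they can never serve as the counterexample.

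Here is why. If $K$ has constant width $1$, then $\operatorname{diam}(K)=1$. For any $v\in\Sph^2$, take $p$ in the lower face $K_-^v=\{x\in K:\langle x,v\rangle=-h_K(-v)\}$ and $q$ in the upper face $K_+^v$. Then $\langle q-p,v\rangle = h_K(v)+h_K(-v)=1$ while $|q-p|\le\operatorname{diam}(K)=1$, so equality in Cauchy--Schwarz gives $q=p+v$. In particular, if $p_1,p_2\in K_-^v$ and $q\in K_+^v$ then $q=p_1+v=p_2+v$, forcing $p_1=p_2$: every face of $K$ is a single point, i.e.\ $K$ is strictly convex. Hence $F(v)=K_-^v\cap(K_+^v-v)=K_-^v$ is the singleton $\{c(-v)\}$, and $v\mapsto c(-v)$ is continuous (upper semicontinuity of the face map plus single-valuedness). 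So the unique selection $\psi(v)=c(-v)$ is automatically continuous, and the Hairy Ball contradiction can never fire. Your axis-avoidance condition also fails concretely: if $p^*\in K$ is the point farthest from the origin and $v^*=p^*/|p^*|$, then $h_K(v^*)v^*=p^*\in K$ and $-h_K(-v^*)v^*=p^*-v^*\in K$ (it is the unique point of $K_-^{v^*}$), so $\psi_0(v^*)\in F(v^*)$ regardless of where the origin sits.

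The paper's construction is of an entirely different nature and does \emph{not} use constant width. It takes $K$ to be the convex hull of the surface swept out by a circle of diameter $1$ that is simultaneously translated along the $x$-axis and rotated about it through angle $\pi$. This body has width strictly greater than $1$ in the $x$-direction, but it is contained in the solid cylinder $\{y^2+z^2\le 1/4\}$, and its intersection with the bounding cylinder is explicitly controlled. For directions $v=(0,\cos\varphi,\sin\varphi)$ with $\varphi\in(-\pi,\pi)\setminus\{0\}$ the set $F(v)$ is forced to be a single point, and a direct computation shows this point jumps by $\pi/2$ in the $x$-coordinate as $\varphi\to 0^\pm$. The obstruction is thus a local discontinuity at a single direction, not a global topological one.
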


As before, the fact that we chose to parametrize orientations of segments by the sphere $\Sph^{d-1}$ instead of $SO(d)$ or the projective space $\mathbb{P}\RR^d$ does not change anything, Theorem~\ref{theorem_continuouscounterexample} would remain true for these parametrizations as well, and reason why the counterexample works is not topological.

The rest of the paper is organised as follows. In Section~\ref{section_2d}, we prove Theorem~\ref{theorem_continuousin2d} and Theorem~\ref{theorem_continuouscounterexample} concerning the stronger property of being able to continuously select in all directions. In Section~\ref{section_Kakeyareachable}, we prove Theorem~\ref{theorem_Kakeyasegments} about rotating in convex Kakeya sets in $\RR^d$ for any $d$, and in Section~\ref{section_reachablecounterexample} we give a counterexample for the corresponding statement for general bodies. We finish with some concluding remarks and open questions in Section~\ref{section_Kakeyaconcluding}.

The proofs in Section~\ref{section_2d} are simpler than the ones in the later sections, but several elements of those proofs reappear or motivate our later approach. In particular, one of the main methods we will have for analysing different cases is to consider the dimensions of the sets $I_\rho=\{w\in \RR^d: \rho(S)+w\subseteq K\}$. It is easy to deal with $\rho$ (and its neighbourhood) if $I_\rho$ has dimension $d$ (i.e., has non-empty interior). One might initially expect that the larger the dimension of $I_\rho$ is, the more room we have to move the copies around and hence the easier to deal with $\rho$. However, this is not entirely true, and the $0$-dimensional case (when $I_\rho$ is a single point) will be quite easy to deal with. For example, it is not difficult to prove that if $I_\rho=\{w_\rho\}$ is a single point for all $\rho$, then $\rho\mapsto w_\rho$ must be continuous. So the most difficult cases in Theorem~\ref{theorem_Kakeyasegments} will come from the situation when some $I_\rho$ has dimension between $1$ and $d-1$, and these will also be the cases we use to obtain counterexamples in Theorems~\ref{theorem_continuouscounterexample} and \ref{theorem_counterexamplereachable_introduction}.

\section{Continuous choice in each direction}\label{section_2d}
In this section we prove Theorem~\ref{theorem_continuousin2d} and Theorem~\ref{theorem_continuouscounterexample} about selecting a copy in each direction in a continuous way. We begin with Theorem~\ref{theorem_continuousin2d}.

First we recall the definition of the Hausdorff metric. Given a point $p\in \RR^d$ and a non-empty compact set $A\subseteq \RR^d$, write $$d(p,A)=\min_{a\in A}|p-a|.$$ Given two non-empty compact sets $X,Y\subseteq \RR^d$, their distance in the Hausdorff metric $d$ is defined as

$$d(X,Y)=\max\{\max_{x\in X}d(x,Y),\max_{y\in Y}d(y,X)\}.$$
It is well-known that this makes the set $\CC_d$ of non-empty compact subsets of $\RR^d$ a metric space. Let $\mathcal{K}_d$ denote the set of non-empty compact convex sets in $\RR^d$ (so $\mathcal{K}_d\subseteq\mathcal{C}_d$).

We will prove the following result.

\begin{lemma}\label{lemma_Hausdorffcontinuous}
	Let $S$ be a non-empty compact subset of $\RR^2$ and let $K$ be convex, compact and $S$-Kakeya. For all $\rho\in \SO(2)$, let $I_{\rho}=\{v\in\RR^2:\rho(S)+v\subseteq K\}$. Then the map $\SO(2)\to \mathcal{K}_2$ given by $\rho\mapsto I_{\rho}$ is continuous.
\end{lemma}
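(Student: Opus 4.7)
The plan is to decompose Hausdorff continuity into upper and lower semi-continuity. Throughout, $I_\rho=\bigcap_{s\in S}(K-\rho(s))$ is a non-empty (by the $S$-Kakeya hypothesis), compact, convex subset of $\RR^2$, and all $I_\rho$ lie in a common bounded region (for instance, $K-\rho(s_0)$ for any fixed $s_0\in S$ is uniformly bounded as $\rho$ varies).

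The upper semi-continuity is a standard compactness argument. If $\rho_n\to\rho_0$ and $v_n\in I_{\rho_n}$, any convergent subsequence $v_{n_k}\to v$ satisfies $\rho_0(s)+v\in K$ for every $s\in S$ (by closedness of $K$), so $v\in I_{\rho_0}$; a contradiction argument then gives $\sup_{v\in I_{\rho_n}}d(v,I_{\rho_0})\to 0$.

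For lower semi-continuity, I would split on $\dim I_{\rho_0}\in\{0,1,2\}$. When $\dim I_{\rho_0}=0$, upper semi-continuity together with non-emptiness of each $I_{\rho_n}$ immediately forces Hausdorff convergence to the single point. When $\dim I_{\rho_0}=2$, any interior point $v_0$ satisfies $B(v_0,\delta)+\rho_0(S)\subseteq K$ for some $\delta>0$, and the uniform small perturbation $\sup_{s\in S}|\rho(s)-\rho_0(s)|$ for $\rho$ near $\rho_0$ is absorbed by this slack, giving $v_0\in I_\rho$; boundary points of $I_{\rho_0}$ are approximated by interior ones, and uniformity on the compact set $I_{\rho_0}$ follows from the $1$-Lipschitz property of $v\mapsto d(v,I_\rho)$ combined with a standard equicontinuity argument.

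The main obstacle is the case $\dim I_{\rho_0}=1$, where $I_{\rho_0}$ is a segment; the difficulty concentrates at its endpoints. At such an endpoint $v_0$, the outward normal cone of $I_{\rho_0}$ is a closed half-plane, so the touch points of $v_0+\rho_0(S)$ with $\partial K$ have outward normals (with respect to $K$) sweeping out that entire half-plane. Under a small rotation by angle $\theta$, each touch point $s^*$ is displaced by approximately $\theta\cdot(s^*)^\perp$, producing a linear constraint $w\cdot n\leq -\theta\,(s^*)^\perp\cdot n$ on the correcting translation $w=v-v_0$ for each touch point $s^*$ with outward normal $n$. The core geometric claim, which uses the $2$-dimensionality of $\RR^2$ in an essential way, is that this system of constraints admits a solution with $|w|=O(|\theta|)$ for either sign of $\theta$, by exploiting the one-dimensional arc structure of the touch-point normals along $\partial K$ and the monotone way in which touch points vary along that arc. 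This is where planarity is essential, consistent with the fact that the analogous statement fails in higher dimensions, as Theorem~\ref{theorem_continuouscounterexample} shows.
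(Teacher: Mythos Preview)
Your overall scaffolding---upper semi-continuity by compactness, then lower semi-continuity split by $\dim I_{\rho_0}\in\{0,1,2\}$---matches the paper exactly, and your treatments of the $0$- and $2$-dimensional cases are essentially identical to the paper's.

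The divergence is in the $1$-dimensional case, and here your proposal has a genuine gap. You reduce correctly to the endpoints of the segment $I_{\rho_0}$, set up the linearised touch-point constraints $\langle w,n\rangle\le -\theta\langle (s^*)^\perp,n\rangle$, and then assert a ``core geometric claim'' that the system is feasible with $|w|=O(|\theta|)$, appealing to a monotone variation of touch points along $\partial K$. But this claim \emph{is} the entire difficulty, and you have not proved it. Several issues are swept under the rug: the touch point $s^*(n)$ need not be unique or vary continuously in $n$; the linearised system ignores curvature of $\partial K$, so feasibility at first order does not obviously give an actual point of $I_\rho$; and the ``monotonicity'' you invoke is neither stated precisely nor shown to force feasibility for both signs of $\theta$. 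Since the analogous statement is false already in $\RR^3$ (Theorem~\ref{theorem_continuouscounterexample}), any valid argument must exploit planarity in a verifiable way, and your sketch does not yet do so.

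The paper handles this case quite differently and more elementarily. After normalising so that $I_{\rho_0}\supseteq[-\delta,\delta]\times\{0\}$, it first observes (Lemma~\ref{lemma_2dsameprojections}) that because $I_{\rho_0}$ has empty interior, the vertical projections of $\rho_0(S)$ and $K$ coincide at both the top and the bottom. Then, for $S'$ Hausdorff-close to $\rho_0(S)$, one applies a single small \emph{vertical} shift to align the top of $S'$ with the top of $K$, and shows (Lemma~\ref{lemma_hausdorffclosecopy}) that each point $q$ of the shifted $S'$ lies on a chord of $K$ between a topmost point $p$ of $\rho_0(S)$ and a nearby point $q'\in\rho_0(S)$, using the horizontal slack $\delta$ to absorb the error. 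This argument is explicit, uses only convexity, and the place where $d=2$ enters is transparent: the direction orthogonal to the segment is one-dimensional, so a single scalar shift suffices to match the projection. If you want to salvage your approach, you would need to replace the heuristic monotonicity claim with an argument of comparable concreteness.
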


Given $I\in \mathcal{K}_d$, we say that $I$ has Chebyshev centre $c$ if $x=c$ minimises $\max_{p\in I} |x-p|$ among all points $x\in\RR^d$. We will use the following properties of Chebyshev centres. (Much more general statements are known about Chebyshev centres in Banach spaces, but the next result is enough for our purposes.)

\begin{lemma}\label{lemma_chebyshevproperties}(See, e.g., \cite[Theorem~5]{amir1978chebyshev} and \cite[subsection 7.1]{amir1984best})
	If $I\in \mathcal{K}_d$ then $I$ has a unique Chebyshev centre $c_I$. Moreover, $c_I\in I$ for all $I$, and the map $\mathcal{K}_d\to\RR^d$ given by $I\mapsto c_I$ is continuous.
\end{lemma}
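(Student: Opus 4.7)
The plan is to reduce the whole lemma to properties of the single auxiliary function
$f_I:\RR^d\to\RR$ defined by $f_I(x)=\max_{p\in I}|x-p|$, which is well-defined since $I$ is compact. I would first observe that $f_I$ is continuous (in fact $1$-Lipschitz in $x$), convex as a supremum of the convex norm-distance functions $x\mapsto|x-p|$, and coercive ($f_I(x)\to\infty$ as $|x|\to\infty$), so it attains its infimum; this gives existence of a Chebyshev centre. The main obstacle — and the crux of the whole lemma — will be uniqueness, because on its own $f_I$ is only convex, not strictly convex (take $I$ a single point). The trick I would use is the parallelogram-type identity in Euclidean space: if $c_1,c_2$ are both Chebyshev centres with common value $r=f_I(c_1)=f_I(c_2)$ and $m=(c_1+c_2)/2$, then for each $p\in I$,
\[
|m-p|^2=\tfrac12|c_1-p|^2+\tfrac12|c_2-p|^2-\tfrac14|c_1-c_2|^2\le r^2-\tfrac14|c_1-c_2|^2.
\]
Taking the maximum over $p\in I$ yields $f_I(m)^2\le r^2-\tfrac14|c_1-c_2|^2$, which forces $c_1=c_2$.

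For $c_I\in I$, I would argue by contradiction using the metric projection onto $I$: if $c_I\notin I$, let $c'$ be the (unique) nearest point of $I$ to $c_I$. The standard variational characterisation of projection onto a closed convex set gives $(c_I-c')\cdot(p-c')\le 0$ for every $p\in I$, and hence
\[
|c_I-p|^2=|c_I-c'|^2-2(c_I-c')\cdot(p-c')+|c'-p|^2\ge|c_I-c'|^2+|c'-p|^2.
\]
Taking the max over $p\in I$ gives $f_I(c')^2\le f_I(c_I)^2-|c_I-c'|^2<f_I(c_I)^2$, contradicting the definition of $c_I$.

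For the continuity of $I\mapsto c_I$, I would first show the elementary estimate $\lvert f_{I_1}(x)-f_{I_2}(x)\rvert\le d_H(I_1,I_2)$, uniformly in $x$: if $p\in I_1$ realises $f_{I_1}(x)$, then picking $p'\in I_2$ with $|p-p'|\le d_H(I_1,I_2)$ gives $f_{I_2}(x)\ge|x-p'|\ge f_{I_1}(x)-d_H(I_1,I_2)$, and symmetrically. Now suppose $I_n\to I$ in Hausdorff distance; the sets $I_n$ lie eventually in a fixed ball, and since $c_{I_n}\in I_n$ (by the previous step), the sequence $(c_{I_n})$ is bounded. To show $c_{I_n}\to c_I$, it suffices to show every convergent subsequence has limit $c_I$; passing to such a subsequence with $c_{I_{n_k}}\to c^*$, the uniform estimate gives
\[
f_{I_{n_k}}(c_{I_{n_k}})\le f_{I_{n_k}}(c_I)\longrightarrow f_I(c_I),\qquad f_{I_{n_k}}(c_{I_{n_k}})\ge f_I(c_{I_{n_k}})-d_H(I_{n_k},I)\longrightarrow f_I(c^*),
\]
so $f_I(c^*)\le f_I(c_I)$, and the already-established uniqueness forces $c^*=c_I$. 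The hardest single step is uniqueness (where one must use something Euclidean-specific beyond convexity); the continuity argument is then the standard “uniform convergence plus unique minimiser'' package and should go through cleanly.
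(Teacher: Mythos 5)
Your proof is correct, but note that the paper does not actually prove this lemma: it is quoted from the literature (Amir's surveys on Chebyshev centres), with the remark that much more general Banach-space versions are known. What you have supplied is a complete, self-contained Euclidean proof, and all four steps check out: existence via coercivity and continuity of $f_I$; uniqueness via the parallelogram identity $|m-p|^2=\tfrac12|c_1-p|^2+\tfrac12|c_2-p|^2-\tfrac14|c_1-c_2|^2$, which is exactly the Euclidean-specific input needed since mere convexity of $f_I$ would not suffice; membership $c_I\in I$ via the variational inequality for the metric projection; and continuity via the uniform bound $|f_{I_1}(x)-f_{I_2}(x)|\le d_H(I_1,I_2)$ combined with the standard ``bounded sequence, every subsequential limit is a minimiser, unique minimiser'' argument. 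The only point worth making explicit is that the subsequential limit $c^*$ satisfies $f_I(c^*)\le f_I(c_I)\le f_I(x)$ for all $x$, so it is itself a Chebyshev centre and uniqueness applies; you state the conclusion correctly. Compared with citing the general Banach-space theory, your argument buys self-containedness at essentially no cost, since the strong convexity of the squared Euclidean norm does all the work; the cited results are stronger (e.g.\ uniqueness and continuity of Chebyshev centres in uniformly convex spaces) but unnecessary for the paper's purposes.
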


It is easy to see that Theorem~\ref{theorem_continuousin2d} follows from Lemmas~\ref{lemma_Hausdorffcontinuous} and \ref{lemma_chebyshevproperties}. So we now need to prove Lemma~\ref{lemma_Hausdorffcontinuous}. 
In fact, we will prove the following stronger statement.
\begin{lemma}\label{lemma_2dallhausdorff}
	Let $K$ be a compact convex set in $\RR^2$. For any non-empty compact set $S$ in $\RR^2$, let $I_S=\{w\in \RR^2: S+w\subseteq K\}$. Let $\mathcal{A}_K$ be the set of all $S$ with $I_S$ non-empty. Then the map $\psi:\mathcal{A}_K\to \mathcal{K}_2$ given by $S\mapsto I_S$ is continuous (with respect to the Haudorff metric on both sides).
\end{lemma}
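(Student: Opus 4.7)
The plan is to prove Hausdorff continuity of $\psi$ by establishing upper and lower semi-continuity. Since $K$ is convex, $I_S = I_{\overline{\mathrm{conv}}(S)}$, and closed convex hull is itself Hausdorff-continuous on compact subsets of $\RR^2$, so I reduce to the case of convex $S$.

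For upper semi-continuity, suppose $S_n \to S$ in Hausdorff metric and $w_n \in I_{S_n}$ with $w_n \to w$. For each $s \in S$, choose $s_n \in S_n$ with $s_n \to s$; then $w_n + s_n \in K$, and closedness of $K$ yields $w + s \in K$, so $w \in I_S$. A standard compactness argument then promotes this to $\sup_{v \in I_{S_n}} d(v, I_S) \to 0$.

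For lower semi-continuity, I need $I_S \subseteq \liminf_n I_{S_n}$, and I would split into cases by $\dim I_S$. If $I_S$ has non-empty interior, any interior point $v$ satisfies $v + S + \bar B_\rho \subseteq K$ for some $\rho>0$, so $v \in I_{S'}$ whenever $d_H(S, S') < \rho$; density of the interior in $I_S$ handles this case. If $I_S = \{w_0\}$, upper semi-continuity forces any $w' \in I_{S'}$ (non-empty by hypothesis) to approach $w_0$. The main obstacle is when $I_S = [a, b]$ is a non-trivial segment. Here I would use convexity of each $I_{S_n}$ to reduce to showing the two endpoints $a, b$ lie in $\liminf I_{S_n}$; then for any $a^{(n)} \in I_{S_n}$ with $a^{(n)} \to a$ and similarly $b^{(n)} \to b$, the segments $[a^{(n)}, b^{(n)}] \subseteq I_{S_n}$ Hausdorff-converge to $[a,b]$, covering intermediate points by interpolation. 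For the endpoint $a$, I would exploit 2D convex geometry: the normal cone $N_a I_S$ to $I_S$ at $a$ is exactly the closed half-plane $H = \{\nu : \langle \nu, b-a\rangle \le 0\}$, and it equals the positive cone generated by the outward normals to $K$ at the tight contact points $\{a + s : s \in \mathrm{conv}(S),\ a+s \in \partial K\}$. Given a perturbation $S_n$ of $S$, the condition $a + u \in I_{S_n}$ reduces to first order to a linear system $\langle u, \nu \rangle \le -\langle \Delta s_\nu, \nu\rangle$ over the tight normals; a direct computation in $\RR^2$ shows this is solvable with $|u| = O(d_H(S, S_n))$ unless two antipodal normals $\nu, -\nu$ are simultaneously tightened — a scenario ruled out by the hypothesis $I_{S_n} \neq \emptyset$.

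The main obstacle is the segment case: with $\mathrm{int}(I_S) = \emptyset$, the buffer argument of the full-dimensional case is unavailable, and one must analyse first-order variation of the defining constraints. The 2D-specific input is that the tight normal cone is a half-plane whose boundary is 1-dimensional, so feasibility of the shift system is cleanly controlled by the non-emptiness hypothesis; in higher dimensions the corresponding cones can have higher-dimensional boundaries and the compensation argument would be considerably more delicate.
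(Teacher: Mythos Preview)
Your overall framework matches the paper: upper semi-continuity via the compactness argument you give, then lower semi-continuity split by $\dim I_S$, with the single-point and full-dimensional cases handled exactly as you describe. The divergence, and the gap, is in the one-dimensional case.

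Your claim that $N_a I_S$ equals the positive cone generated by the outward normals to $K$ at the contact points of $a+\operatorname{conv}(S)$ is false. Take $K$ to be the stadium $[-1,1]\times\{0\}+\bar B_1$ and $S=\{0\}\times[-1,1]$; then $I_S=[-1,1]\times\{0\}$, and at the endpoint $a=(-1,0)$ the only contacts of $a+S$ with $\partial K$ are $(-1,\pm 1)$, whose outward normals are $(0,\pm 1)$. These span only the $y$-axis, not the half-plane $N_a I_S=\{\nu:\nu_1\le 0\}$; the obstruction to moving $a$ leftward is a curvature effect invisible to your linearisation. Concretely, with $S'$ the segment from $(-\epsilon,-1)$ to $(0,1)$ your first-order system returns $u_2=0$ with $u_1$ unconstrained, yet $u=0$ fails and one actually needs $u_1\ge\epsilon$ to get $a+u\in I_{S'}$. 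So feasibility of the first-order system does not by itself locate a point of $I_{S'}$, and the step ``a direct computation shows solvability with $|u|=O(d_H)$'' is where the missing idea lies.

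The paper avoids all of this by working at \emph{interior} points of the segment rather than the endpoints. After rotating so that $I_S\subseteq\RR\times\{0\}$, it first observes that since $I_S$ has empty interior, the maximal and minimal $y$-coordinates of $S$ and of $K$ must coincide (otherwise one could translate $S$ slightly in the $y$-direction, forcing $I_S$ to be two-dimensional). Then for $S'$ Hausdorff-close to $S$, one shifts $S'$ vertically by the small amount needed to make its $y$-range sit inside that of $K$, and shows by an explicit convexity argument (placing each point of the shifted $S'$ on a chord of $K$ between a nearby point of $S$ and a $y$-extremal point of $S$) that this shifted copy lies in $K$. This produces a point of $I_{S'}$ near any interior point of $I_S$; the endpoints are then recovered by approximation from the interior, which is exactly where your reduction-to-endpoints strategy makes life harder than necessary.
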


Lemma~\ref{lemma_2dallhausdorff} certainly implies Lemma~\ref{lemma_Hausdorffcontinuous}, as $\rho\mapsto \rho(S)$ is easily seen to be continuous for any fixed $S$. Also, note that  Lemma~\ref{lemma_Hausdorffcontinuous} and Lemma~\ref{lemma_2dallhausdorff} are not true in dimensions greater than $2$, by the construction in Theorem~\ref{theorem_continuouscounterexample}.

Let us start the proof of Lemma~\ref{lemma_2dallhausdorff}. The first lemma towards the proof essentially says that if $I_S$ is a segment on the $x$ axis (so $I_S$ is one-dimensional), then the projections of $K$ and $S$ to the $y$ axis have the same maximum values (and similarly minimum values). This is rather easy to see when $S$ is a segment, and only slightly more complicated in general.

\begin{lemma}\label{lemma_2dsameprojections}
	Suppose that $K\subseteq\RR^2$ is compact and convex, $S\subseteq \RR^2$ is non-empty and compact, and $\delta>0$ is such that $\{v\in \RR^2: S+v\subseteq K\}\supseteq\{(a,0):|a|\leq \delta\}$. Let $p=(x_0,y_0)$ and $p'=(x_0',y_0')$ be points of $S$ and $K$ (respectively) with maximal second coordinates. Then either $y_0=y_0'$, or there is some $\epsilon>0$ such that $S+(0,\epsilon)\subseteq K$. Similarly, if $p''=(x_0'',y_0'')$ and $p'''=(x_0''',y_0''')$ are points of $S$ and $K$ (respectively) with minimal second coordinates, then either $y_0''=y_0'''$, or there is some $\epsilon>0$ such that $S-(0,\epsilon)\subseteq K$
\end{lemma}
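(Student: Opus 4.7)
The plan is to handle the two halves of the lemma symmetrically, so I focus on the maximum-coordinate case. Note that $(0,0)\in I_S$ (taking $a=0$ in the hypothesis), so $S\subseteq K$, giving $y_0\le y_0'$; it therefore suffices to show that if $y_0<y_0'$ then some upward shift of $S$ remains inside $K$.

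The main idea is to realise each point of $S+(0,\epsilon)$ explicitly as a convex combination of two points already known to lie in $K$: one on a horizontal shift of $S$, and the topmost point of $K$. Fix $p'=(x_0',y_0')\in K$. For $s=(s_1,s_2)\in S$ and $\epsilon\in(0,y_0'-s_2)$, set
\[
\lambda=\frac{\epsilon}{y_0'-s_2}\in(0,1),\qquad a=\frac{\lambda(s_1-x_0')}{1-\lambda}.
\]
A direct computation gives the identity $(s_1,s_2+\epsilon)=(1-\lambda)(s_1+a,s_2)+\lambda p'$. By the hypothesis on $\delta$, the point $(s_1+a,s_2)$ lies in $K$ provided $|a|\le\delta$, and $p'\in K$, so convexity of $K$ would then yield $(s_1,s_2+\epsilon)\in K$.

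The only remaining task is to choose $\epsilon>0$ so that the condition $|a|\le\delta$ holds simultaneously for every $s\in S$. Rewriting, $|a|\le\delta$ amounts to $\epsilon(|s_1-x_0'|+\delta)\le\delta(y_0'-s_2)$. Since $s_2\le y_0<y_0'$, the right-hand side is bounded below by $\delta(y_0'-y_0)>0$, and by compactness of $S$ the quantity $M:=\sup_{s\in S}|s_1-x_0'|$ is finite. Then $\epsilon:=\delta(y_0'-y_0)/(M+\delta)>0$ satisfies the inequality for every $s\in S$, so $S+(0,\epsilon)\subseteq K$, as required.

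The minimum-coordinate case is entirely analogous: replace $p'$ by a bottommost point $p'''=(x_0''',y_0''')$ of $K$, replace $\lambda$ by $\epsilon/(s_2-y_0''')$, and the same identity (with sign flipped in the second coordinate) shows $(s_1,s_2-\epsilon)\in K$ for all $s\in S$ under an analogous bound on $\epsilon$. I do not foresee a genuine obstacle here — the convex-combination identity essentially dictates the calculation, and the only subtle point is that compactness of $S$ is used to keep the choice of $\epsilon$ uniform in $s$ even though the individual tolerances depend on $s_2$ and on the horizontal distance $|s_1-x_0'|$.
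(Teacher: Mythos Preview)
Your proof is correct and follows essentially the same approach as the paper: both realise each shifted point of $S$ as a convex combination of a horizontally translated copy of that point (known to lie in $K$ by the hypothesis on $\delta$) and the topmost point $p'$ of $K$, then use compactness of $S$ to make $\epsilon$ uniform. Your version is in fact slightly cleaner, since by solving for the exact horizontal shift $a$ you avoid the paper's case split on the sign of $s_1-x_0'$; the only cosmetic point is that when $M=0$ your chosen $\epsilon$ gives $\lambda=1$ at the top point of $S$, but there the conclusion is immediate since that shifted point equals $p'$.
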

\begin{proof}
	We only prove the first claim, as the second one is similar. Certainly $y_0'\geq y_0$ as $S\subseteq K$. Let us assume that $y_0'>y_0$, we show that if $\epsilon>0$ is sufficiently small then for any $q=(x_1,y_1)\in S$ we have $q+(0,\epsilon)\in K$. It is enough to consider the case $x_1\geq x_0'$.
	Let $L>0$ be such that $S\subseteq[-L,L]^2$. We know that $q'=(x_1+\delta,y_1)$  is in $K$. By convexity, the line segment between $q'$ and $p'$ also lies in $K$ and hence $\left(x_1, \frac{y_1-y_0'}{x_1+\delta-x_0'}(x_1-x_0')+y_0'\right)\in K$. But we have 
	\begin{align*}
	\left(\frac{y_1-y_0'}{x_1+\delta-x_0'}(x_1-x_0')+y_0'\right)-y_1=\frac{\delta}{x_1+\delta-x_0'}(y_0'-y_1)\geq \frac{\delta}{2L+\delta}(y_0'-y_0).
	\end{align*}
	It follows that $\epsilon=\frac{\delta}{2L+\delta}(y_0'-y_0)$ satisfies the conditions.
\end{proof}

The next lemma will be used to prove Hausdorff-continuity in the difficult case, i.e., when $I_S$ is one-dimensional.

\begin{lemma}\label{lemma_hausdorffclosecopy}
	Suppose that $K\subseteq\RR^2$ is compact and convex, and define the sets $I_S$ and $\AA_K$ as in Lemma~\ref{lemma_2dallhausdorff}. Assume that $u\in \RR^2$, $\delta>0$ and $S\in \AA_K$ such that $I_S$ has empty interior but $I_S\supseteq\{u+(a,0):|a|\leq \delta\}$. Then for all $\epsilon>0$ there is some $\eta>0$ such that whenever $S'\in \AA_K$ satisfies $d(S,S')<\eta$ then there is some $w\in I_{S'}$ with $|w-u|<\epsilon$.
\end{lemma}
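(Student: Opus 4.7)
The plan is to construct, for $\eta$ sufficiently small, an explicit $w = (0, w_y) \in I_{S'}$ with $|w_y| \leq \eta$, by showing that the uniform horizontal ``slack'' of $S$ inside $K$ is inherited by $S'$ with only a small error.

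After translating so that $u = 0$ and replacing $S, S'$ by their convex hulls (which affects neither $I_S$ nor $I_{S'}$, and is $1$-Lipschitz for the Hausdorff metric), the hypotheses force $I_S$ to be a horizontal segment $[-\alpha, \beta] \times \{0\}$ with $\alpha, \beta \geq \delta$, since a compact convex set in $\RR^2$ with empty interior lies in a line. Applying Lemma~\ref{lemma_2dsameprojections} in the ``up'' and ``down'' orientations shows that $S$ and $K$ share the same extremal $y$-coordinates, which I call $y_t$ and $y_b$. I describe each of $S, S', K$ by its left and right boundary functions on its vertical extent, using the notation $s^{\pm}, {s'}^{\pm}, K^{\pm}$. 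The containments $S + (a,0) \subseteq K$ for $a \in [-\alpha, \beta]$ then translate slice by slice into the uniform slack inequalities $s^-(h) - K^-(h) \geq \alpha$ and $K^+(h) - s^+(h) \geq \beta$ for every $h \in [y_b, y_t]$. I would then take $w_y$ to be the point of $[y_b - y'_b,\, y_t - y'_t]$ closest to $0$: this interval is non-empty because $S' \in \AA_K$ forces $y'_t - y'_b \leq y_t - y_b$, and it is contained in $[-\eta, \eta]$ since Hausdorff closeness gives $|y'_t - y_t|, |y'_b - y_b| \leq \eta$. The desired containment $S' + w \subseteq K$ then unfolds slicewise into ${s'}^-(h') \geq K^-(h' + w_y)$ and ${s'}^+(h') \leq K^+(h' + w_y)$ for every $h' \in [y'_b, y'_t]$, so the task reduces to verifying these inequalities uniformly.

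The main obstacle is the boundary case $h' \in \{y'_b, y'_t\}$. On any compact subinterval of $(y_b, y_t)$ the standard Hausdorff convergence of convex bodies gives uniform convergence ${s'}^{\pm} \to s^{\pm}$, and combined with continuity of $K^{\pm}$ the two slacks stay above $\alpha - o(1)$ and $\beta - o(1)$ there. Near the top and bottom, however, the slice functions of $S$ and $S'$ need not be Hausdorff-close in a Lipschitz sense (a sharp apex of $S$ can flatten into a segment of width $\sim \sqrt{\eta}$ in $S'$). I would handle this directly: any $x$ in the top slice of $S'$ gives a point $(x, y'_t) \in S'$ within $\eta$ of some $(x_0, h_0) \in S$ with $h_0 \in [y_t - 2\eta, y_t]$, so $x \geq s^-(h_0) - \eta$; the continuity (not Lipschitz) of $s^-$ at the endpoint $y_t$, which is a standard fact for compact convex bodies in $\RR^2$ verifiable via a convex-combination argument, then yields $x \geq s^-(y_t) - \omega_S(2\eta) - \eta$ for some modulus $\omega_S$ with $\omega_S(t) \to 0$ as $t \to 0$. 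Combined with $s^-(y_t) - K^-(y_t) \geq \alpha$ and an analogous continuity estimate for $K^-$ at $y_t$, the top-left slack stays above $\alpha/2$ for $\eta$ small, and symmetric arguments handle the top-right slack and both slacks at $y'_b$. Choosing $\eta$ small enough that all the above error terms lie below $\min(\alpha, \beta)/2$ and also below $\epsilon$ produces a $w$ with $|w - u| \leq \eta < \epsilon$, as required.
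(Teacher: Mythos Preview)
Your approach is correct and takes a genuinely different route from the paper's. The paper does \emph{not} pass to convex hulls; instead, after the same vertical alignment, it handles each point $q=(x_1,y_1)\in S'$ individually: it picks a nearby $q'=(x_2,y_2)\in S$ and a suitable top point $p=(x_0,y_0)\in S$, and shows by an explicit computation that $q$ lies on the segment $[p,q']$ after a horizontal shift of size at most $\delta$ (whence $q\in K$, since $S+(s,0)\subseteq K$ for $|s|\leq\delta$ and $K$ is convex). The key technical step there---a compactness claim that near the top of $S$ every point is within horizontal distance $\delta/2$ of some point on the top slice of $S$---plays exactly the role of your continuity of $s^-$ at the apex, but phrased for general compact (not necessarily convex) $S$. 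Your slice-by-slice argument via boundary functions is more structural once you have convexified; one remark both streamlines and completes it: the modulus-of-continuity estimate you give for the top slice in fact works uniformly for \emph{every} $h'\in[y'_b,y'_t]$. Indeed, $({s'}^-(h'),h')\in S'$ is within $\eta$ of some $(x_0,h_0)\in S$, so ${s'}^-(h')\geq s^-(h_0)-\eta$; since $|h_0-(h'+w_y)|<2\eta$ and $s^-$ is uniformly continuous on the compact interval $[y_b,y_t]$, this gives ${s'}^-(h')\geq s^-(h'+w_y)-\omega_S(2\eta)-\eta\geq K^-(h'+w_y)+\alpha-\omega_S(2\eta)-\eta$, which exceeds $K^-(h'+w_y)$ for small $\eta$. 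This removes the need for your case split into ``compact subinterval of $(y_b,y_t)$'' versus ``endpoints $\{y'_b,y'_t\}$'', and simultaneously closes the small gap that those two regions together do not exhaust $[y'_b,y'_t]$.
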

\begin{proof}
We may assume $u=0$ (by replacing $K$ by $K-u$). Since $I_S$ has empty interior, by Lemma~\ref{lemma_2dsameprojections} we have $y_0=y_0'$ and $y_0''=y_0'''$ (using the notation in the statement of that lemma). If $S'\in \AA_K$ and $d(S,S')<\eta$, we know $S'+(0,z)\subseteq \RR\times [y_0'',y_0]$ for some $z\in \RR$ with $|z|<\eta$. (Indeed, we can pick $z$ such that the largest second coordinate of a point in $S'$ is $y_0+z$.) We show that if $\eta$ is small enough, then we must have $(0,z)\in I_{S'}$. (Then we are done, as we can choose $\eta<\epsilon$.) By replacing $S'$ with $S'+(0,z)$ (and $\eta$ by $2\eta$), we may assume that $z=0$.

So we need to show that for any point $q=(x_1,y_1)\in S'$, we have $q\in K$ (if $\eta$ is small). We know there is some $q'=(x_2,y_2)\in S$ with $|x_1-x_2|<\eta$, $|y_1-y_2|<\eta$. We may assume that $y_1\geq y_2$. We wish to show that for some  $s\in[-\delta,\delta]$, $q-(s,0)$ must lie on the line segment between $p=(x_0,y_0)$ and $q'=(x_2,y_2)$. (Then we are done, since $p,q'\in S$, $S+(s,0)\subseteq K$, and $K$ is convex.)
\begin{figure}[h!]
	\includegraphics[clip,trim=0cm 0cm 0cm 0cm, width=0.6\linewidth]{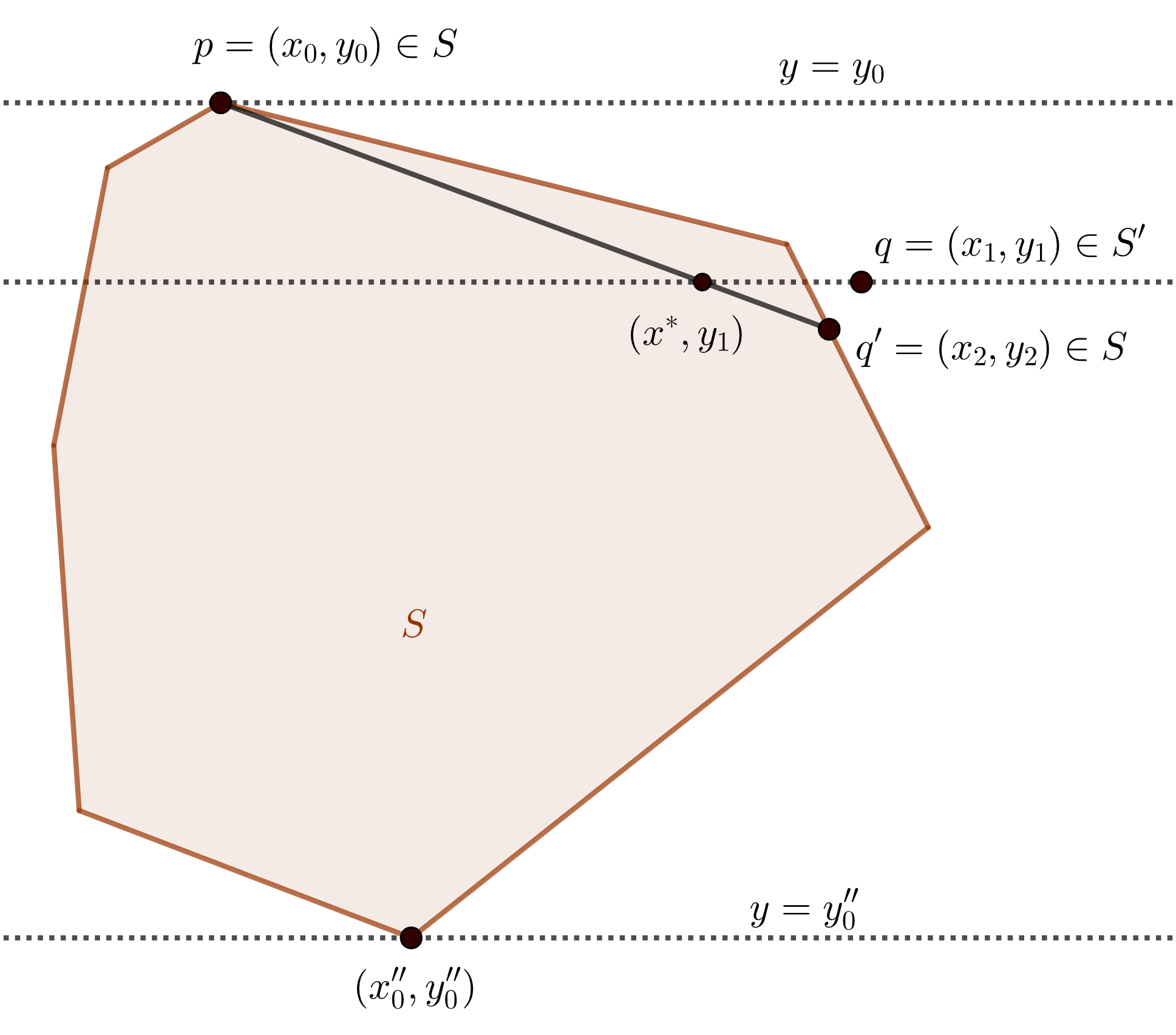}
	\centering
	\captionsetup{justification=centering}
	\caption{The points used in the proof of Lemma~\ref{lemma_hausdorffclosecopy}.}
	\label{figure_2dcalc}
\end{figure}

First assume that $y_2=y_0$. So $y_0=y_1=y_2$. But then $q=q'+(s,0)$ for some $s\in (-\eta,\eta)$, so our claim follows easily by picking $\eta<\delta$.

So let us now assume $y_2\not =y_0$ (so $y_2<y_0$). Observe that points $(x,y)$ on the segment between $p$ and $q'$ are the ones satisfying the equation $x-x_0=\frac{x_0-x_2}{y_0-y_2}(y-y_0)$ and have $y_2\leq y\leq y_0$. It follows that $(x^*,y_1)$ is on this segment, where $x^*=x_0+\frac{x_0-x_2}{y_0-y_2}(y_1-y_0)$. We have
\begin{align*}
|x^*-x_2|&=\left|x_0+\frac{x_0-x_2}{y_0-y_2}(y_1-y_0)-x_2\right|\\
&=\left|\frac{x_0-x_2}{y_0-y_2}(y_1-y_2)\right|.
\end{align*}
We will use the following claim to bound this quantity.

\textbf{Claim.} There is some $\mu>0$ depending on $S, \delta$ only such that whenever $(x,y)\in S$ and $y>y_0-\mu$, then there is some $\bar{x}_0$ such that $(\bar{x}_0,y_0)\in S$ and $|\bar{x}_0-x|<\delta/2$.

\textbf{Proof of Claim.} If this is not true, then for all $n$ we can find $(x(n),y(n))\in S$ such that $y(n)>y_0-1/n$ and whenever $(\bar{x}_0,y_0)\in S$ then $|\bar{x}_0-x(n)|\geq \delta/2$. By taking a subsequence, we may assume that $(x(n),y(n))$ converges to some $(\tilde{x},\tilde{y})\in S$. But then $\tilde{y}=y_0$ and $\tilde{x}-x(n)\to 0$, giving a contradiction and proving the claim.\qed

By the claim above, we can modify $x_0$ if necessary so that either $y_0-y_2\geq \mu$ or $|x_0-x_2|<\delta/2$.

In the first case we get $|x^*-x_2|\leq \frac{|x_0-x_2|}{\mu}|y_1-y_2|$. Let $L>0$ be such that $S\subseteq [-L,L]^2$, then we get $|x^*-x_2|\leq \frac{2L}{\mu}\eta$ and hence $|x^*-x_1|\leq \eta+\frac{2L}{\mu}\eta$. This converges to $0$ (independently of $q,q'$) as $\eta\to 0^+$, as required.

On the other hand, if $|x_0-x_2|<\delta/2$ then, using $y_0-y_2\geq y_0-y_1$, we get $|x^*-x_2|\leq \delta/2$ and hence $|x^*-x_1|\leq \delta/2+\eta$, which is less than $\delta$ for $\eta<\delta/2$.
\end{proof}

\begin{proof}[Proof of Lemma~\ref{lemma_2dallhausdorff}]
	First note that all sets of the form $I_S$ are convex and compact. Let $S\in\AA_K$ be arbitrary, we show $\psi$ is continuous at $S$, i.e., whenever $S_n\to S$ with $S_n\in\AA_K$, then $d(I_{S_n},I_S)\to 0$. First we show $\max_{x\in I_{S_n}}d(x,I_S)\to 0$. Indeed, if this is not true, then by taking an appropriate subsequence $(S_{k(n)})$ we get that there is a sequence $(x_{n})$ with $x_n\in I_{S_{k(n)}}$ such that $d(x_n,I_S)\not\to 0$ and $x_n\to x$ for some $x$. But we have $S_{k(n)}+x_n\subseteq K$ for all $n$. Hence $S+x\subseteq K$, i.e., $x\in I_S$. (Indeed, for any $s\in S$ we can take a sequence $(s_n)$ with $s_n\in S_{k(n)}$ and $(s_n)\to s$. Then $s_n+x_n\in K$ for all $n$, so, by taking limits, $s+x\in K$.) But then $d(x_n,I_S)\to 0$, giving a contradiction. So $\max_{x\in I_{S_n}}d(x,I_S)\to 0$.

It remains to show that $\max_{x\in I_{S}}d(x,I_{S_n})\to 0$. Observe that it suffices to show that $d(x,I_{S_n})\to 0$ for any point $x\in I_S$. Indeed, the functions $x\mapsto d(x,I_{S_n})$ are $1$-Lipschitz on the compact domain $I_S$, so pointwise convergence implies uniform convergence. We consider three cases: when $I_S$ is a single point, when $I_S$ is one-dimensional, i.e., $I_S=\{(1-t)a+tb: t\in [0,1]\}$ for some $a,b\in \RR^2$ distinct, and when $I_S$ is two-dimensional, i.e., has non-empty interior.

First assume that $I_S=\{p\}$ is a single point. Then trivially $$d(p,I_{S_n})=\min_{x\in I_{S_n}}d(p,x)\leq \max_{x\in I_{S_n}}d(p,x)=\max_{x\in I_{S_n}}d(x,I_S)\to 0,$$
giving the claim.

Next, assume that $I_S$ is one-dimensional (i.e., a segment). By taking an appropriate rotation and translation, we may assume that $I_S=[-\delta,\delta]\times\{0\}$ for some $\delta>0$. Let $x\in I_S$ and $\epsilon>0$ be arbitrary, we show $d(x,I_{S_n})<\epsilon$ for $n$ large enough. We may assume that $\epsilon<\delta$. Let $x'\in [-\delta+\epsilon/2,\delta-\epsilon/2]\times\{0\}$ be such that $|x-x'|\leq\epsilon/2$. Since $I_S\supseteq \{x'+(a,0):|a|\leq \delta/2\}$, Lemma~\ref{lemma_hausdorffclosecopy} shows that for all $n$ large enough there is some $w\in I_{S_n}$ with $|w-x'|\leq \epsilon/4$. But then we also have $|w-x|<\epsilon$, as required.

Finally, assume that $I_S$ is two-dimensional, i.e., has non-empty interior. Let $x\in I_S$ and $\epsilon>0$ be arbitrary, we show $d(x,I_{S_n})<\epsilon$ for $n$ large enough. We can find $x'\in I_{S}$ with $|x'-x|<\epsilon$ such that $x'$ is in the interior of $I_{S}$, i.e., $I_S$ contains a ball of radius $r>0$ around $x'$. Then whenever $d(S',S)<r$, we have $x'\in I_{S'}$. (Indeed, $x'+S'\subseteq x'+B_r(0)+S\subseteq I_S+S\subseteq K$, where $B_r(0)$ denotes the ball of radius $r$ centred at $0$.) Hence $x'\in I_{S_n}$ for $n$ large enough, giving the claim.
\end{proof}

\begin{proof}[Proof of Theorem~\ref{theorem_continuousin2d}]
	Let $S'$ be the the closure of $S$. Then $\{v\in \RR^2: \rho(S)+v\subseteq K\}=\{v\in \RR^2: \rho(S')+v\subseteq K\}$ for all $\rho\in \SO(2)$. By replacing $S$ by $S'$, we may assume that $S$ is compact. Then the result follows easily from Lemma~\ref{lemma_2dallhausdorff} and Lemma~\ref{lemma_chebyshevproperties} by letting $f(\rho)$ be the Chebyshev centre of $I_{\rho(S)}$.
\end{proof}

We finish this section by proving Theorem~\ref{theorem_continuouscounterexample}. Informally, the construction can be described as follows. Take a circle of diameter $1$ in the $xy$ plane, and start moving it in the $x$ direction while simultaneously rotating it around the $x$ axis. Stop when the rotated circle gets back to the $xy$ plane, and take the convex hull of the points traversed. See Figure~\ref{figure_continuouscounterexample}. The discontinuity will come at the direction $(0,1,0)$ by considering directions of the form $(0,y,\pm\sqrt{1-y^2})$, $y\to 1^-$. The formal proof is given below.
\begin{figure}[h!]
	\centering
	\begin{subfigure}[h!]{0.45\textwidth}
		\centering
		\includegraphics[width=\textwidth]{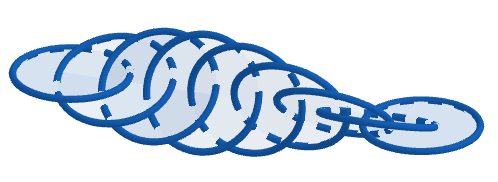}
					\captionsetup{justification=centering}
		\caption{Some phases of the circle being rotated and translated.}
		\label{figure_circlephases}
	\end{subfigure}
\hfill
	\begin{subfigure}[h!]{0.45\textwidth}
		\centering
		\includegraphics[width=\textwidth]{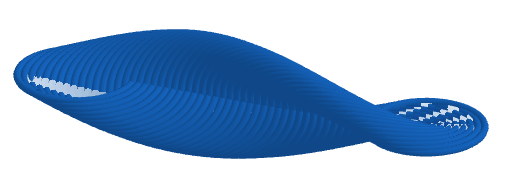}
			\captionsetup{justification=centering}
		\caption{The set of points traversed during the motion. The final construction is obtained by taking the convex hull of this set.}
		\label{figure_finalpicture}
	\end{subfigure}
	\captionsetup{justification=centering}
	\caption{The counterexample in Theorem~\ref{theorem_continuouscounterexample} is obtained by simultaneously translating and rotating a circle, and then taking convex hull of the points traversed.}
	\label{figure_continuouscounterexample}
\end{figure}
\begin{proof}[Proof of Theorem~\ref{theorem_continuouscounterexample}]
	Define the function $f: [0,\pi]\times \Sph^1\to \RR^3$ by letting
	$$f(t,x,y)=\frac{1}{2}(t+x,y\cos t, y\sin t).$$
	Let $K_0$ be the image of $f$ and let $K$ be the convex hull of $K_0$. Observe that $f$ is continuous and the domain of $f$ is compact, hence $K_0$ is compact. It follows that $K$ is convex and compact. Also, note that if $v\in \Sph^2$, then $v$ can be written as $v=(r_1,r_2\cos \varphi, r_2\sin\varphi)$ for some $r_1,r_2\in\RR$ with $r_1^2+r_2^2=1$ and $\varphi\in [0,\pi]$. Then $f(\varphi, r_1,r_2)-f(\varphi, -r_1,-r_2)=(r_1,r_2\cos \varphi,r_2\sin \varphi)=v$, so $I_v$ is non-empty, where $I_v=\{u\in\RR^2:u,u+v\in K\}$. It remains to show that there is no continuous function $\psi: \Sph^2\to K$ such that $\psi(v)\in I_v$ for all $v$.
	
	Let $C=\{(a,b,c)\in\RR^3:b^2+c^2=1/4\}$ and $C'=\{(a,b,c)\in\RR^3:b^2+c^2\leq 1/4\}$. Observe that $K_0\subseteq C'$ and $$K_0\cap C=\left\{\frac{1}{2}(t,s\cos t,s\sin t):s=\pm 1, t\in[0,\pi]\right\}.$$ It is easy to deduce that $K\subseteq C'$ and $$K\cap C=\left\{\frac{1}{2}(t,s\cos t,s\sin t):s=\pm 1, t\in[0,\pi]\right\}\cup\left\{\frac{1}{2}(a,\pm 1,0):a\in [0,\pi]\right\}.$$ It is easy to deduce that if $v=(0,\cos \varphi, \sin\varphi)$ for some $\varphi\in(0,\pi)$, then $I_v$ consists of the single point $\frac{1}{2}(\varphi,-\cos\varphi,-\sin\varphi)$, and if $v=(0,\cos \varphi, \sin\varphi)$ for some $\varphi\in(-\pi,0)$, then $I_v$ consists of the single point $\frac{1}{2}(\pi+\varphi,\cos(\pi+\varphi),\sin(\pi+\varphi))=\frac{1}{2}(\pi+\varphi,-\cos\varphi,-\sin\varphi)$. It follows that if $\psi: \Sph^2\to K$ such that $\psi(v)\in I_v$ for all $v$, then $\psi$ cannot be continuous at $(0,1,0)$.
\end{proof}

\section{Segments in \texorpdfstring{$\RR^d$}{R^d}}\label{section_Kakeyareachable}
\subsection{Proof outline and some simple results}
Our goal in this section is to prove Theorem~\ref{theorem_Kakeyasegments} about Kakeya sets in $\RR^d$. Throughout this section, we assume that $d\geq 3$ and $K$ is a compact convex set in $\RR^d$ such that for all $v\in \Sph^{d-1}$, the set $I_v=\{u\in\RR^d: u, u+v\in K\}$ is non-empty. Note that $I_v$ is a compact convex set for all $v$.

Given $v, v'\in \Sph^{d-1}$, $u\in I_v, u'\in I_{v'}$ and $\gamma: [0,1]\to \Sph^{d-1}$ continuous with $\gamma(0)=v, \gamma(1)=v'$, say that $(v',u')$ is reachable from $(v,u)$ along $\gamma$ if there is a continuous $\delta:[0,1]\to K$ such that $\delta(t)\in I_{\gamma(t)}$ for all $t$, $\delta(0)=u$ and $\delta(1)=u'$. We say that $v'$ is reachable from $v$ along $\gamma$ if there exist $u,u'$ such that $(v',u')$ is reachable from $(v,u)$ along $\gamma$, and we say $v'$ (or $(v',u')$) is reachable from $v$ (respectively, $(v,u)$) if there exists a $\gamma$ along which it is reachable. Given a subset $X\subseteq \Sph^{d-1}$, $\epsilon\geq 0$ and $\gamma: [0,1]\to \Sph^{d-1}$ we say that $\gamma$ is $\epsilon$-close to $X$ if for all $t\in [0,1]$ there is some $p\in X$ such that $|p-\gamma(t)|\leq\epsilon$. Given $\epsilon\geq 0$ and $\gamma,\gamma': [0,1]\to \Sph^{d-1}$ we say that $\gamma$ is $\epsilon$-close to $\gamma'$ if it is $\epsilon$-close to the image of $\gamma'$. (Note that this relation is not symmetric.)

So, using this terminology, our goal is to prove the following result.

\begin{theorem}\label{theorem_reachable}
	Let $v,v'\in \Sph^{d-1}$ and $u\in I_v, u'\in I_{v'}$, and let $\gamma:[0,1]\to \Sph^{d-1}$ be continuous such that $\gamma(0)=v, \gamma(1)=v'$. Then for any $\epsilon>0$, $(v',u')$ is reachable from $(v,u)$ along a path which is $\epsilon$-close to $\gamma$.
\end{theorem}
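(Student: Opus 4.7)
The plan is to reduce Theorem~\ref{theorem_reachable} to a local reachability statement and chain the local pieces together via compactness of $\gamma([0,1])$. Concretely, I would aim to prove the following local claim: for every $v_0\in\Sph^{d-1}$, every $u_0\in I_{v_0}$, and every $\eta>0$, there is a spherical neighbourhood $U\subseteq B(v_0,\eta)\cap\Sph^{d-1}$ of $v_0$ such that for every $v\in U$ there exists $u\in I_v$ with $(v,u)$ reachable from $(v_0,u_0)$ along some path whose direction component stays inside $U$. Granted this, partition $[0,1]$ finely enough that $\gamma$ restricted to each subinterval maps into one of the local neighbourhoods, and inductively advance: at step $i$ apply the local claim at the current $(v_i,u_i)$ to reach some $(v_{i+1},u_{i+1})$ with $v_{i+1}=\gamma(s_{i+1})$; finally, within the last fibre $I_{v'}$, interpolate from the $u_N$ produced in the last step to the prescribed $u'$ using convexity of $I_{v'}$. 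Making each local neighbourhood have radius at most $\epsilon$ forces the concatenated direction path to be $\epsilon$-close to $\gamma$.

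The local claim splits by $k=\dim I_{v_0}$, the dimension of the affine span of $I_{v_0}$. If $k=d$, first move $u_0$ within $I_{v_0}$ (keeping $v$ fixed at $v_0$) to an interior point $u_0'$ of $I_{v_0}$; any ball $B(u_0',r)\subseteq I_{v_0}$ yields $B(u_0'+v_0,r)\subseteq K$, and so $u_0'\in I_v$ for all $v$ sufficiently close to $v_0$, giving a constant $u$-branch. If $k=0$, i.e.\ $I_{v_0}=\{u_0\}$, the compactness of $K$ forces the set-valued map $v\mapsto I_v$ to have closed graph, hence be upper semi\-continuous; combined with $I_{v_0}=\{u_0\}$ this means $I_v$ is contained in any preassigned neighbourhood of $u_0$ once $v$ is close enough to $v_0$, so $v\mapsto I_v$ is actually Hausdorff-continuous at $v_0$. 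Choosing $u(v)$ to be the Chebyshev centre of $I_v$ (Lemma~\ref{lemma_chebyshevproperties}) then produces a continuous selection starting at $u_0$, and the straight-line rotation in $\Sph^{d-1}$ composed with this selection gives the required path.

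The serious case, and the main obstacle, is $1\leq k\leq d-1$. Here Hausdorff-continuity of $v\mapsto I_v$ must fail in general (otherwise the analogue of Theorem~\ref{theorem_continuouscounterexample} would be contradicted), so no continuous selection over a full neighbourhood of $v_0$ is available: we can only construct paths emanating from the specific basepoint $(v_0,u_0)$. The key geometric input I would try to develop is a higher-dimensional analogue of Lemma~\ref{lemma_2dsameprojections}: the hypothesis $\dim I_{v_0}=k$ forces the boundary of $K$ to be tight along the $d-k$ directions complementary to the affine span of $I_{v_0}$, around the slab $\{u+tv_0:u\in I_{v_0},\,t\in[0,1]\}$. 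Using this rigidity, for each $v$ close to $v_0$ one should be able to reach some $u\in I_v$ by first sliding $u_0$ along $I_{v_0}$ to a suitable $u_0^*$ and then performing a short coordinated motion in which the direction rotates from $v_0$ to $v$ while the position is corrected using the room left by the rigidity. The careful selection of these corrections, ensuring the segment really stays inside $K$ throughout and the construction can be carried out uniformly in $v$, is the technical heart of the argument; I expect that an induction on $k$ (or on $d$) will be necessary to organise the corrections, as the structure of the ``tight'' part of $\partial K$ around the slab is itself a lower-dimensional Kakeya-type configuration.
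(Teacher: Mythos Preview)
Your top-level strategy (local reachability $+$ compactness $+$ concatenation, with the endpoint correction in $I_{v'}$ by convexity) matches the paper exactly, and your treatment of the full-dimensional case $k=d$ is the same as Lemma~\ref{lemma_nicetype}. The $k=0$ case is mostly right, but note a subtlety: Hausdorff continuity of $v\mapsto I_v$ \emph{at} $v_0$ only makes the Chebyshev-centre selection continuous \emph{at} $v_0$; to get a continuous path you need continuity along the whole geodesic. The paper (Lemma~\ref{lemma_typepoint}) handles this by splitting into two subcases: either some neighbourhood of $v_0$ consists entirely of singleton fibres (where your argument works verbatim), or there are non-singleton fibres arbitrarily close, in which case one escapes into the intermediate-case machinery.

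The real gap is the intermediate case $1\le k\le d-1$. Your plan is to use tightness of $\partial K$ to engineer a direct ``slide then correct'' motion to every nearby $v$, possibly via induction on $k$. The paper does something quite different and more decisive, and I do not see how to complete your route without it. The key observation you are missing is Lemma~\ref{lemma_largeinnerproducts}/\ref{lemma_spheresaroundtypeline}: if $u,u+w\in I_{v_0}$ with $w\ne 0$ and $\langle v_0,w\rangle=0$, then $u$ and $u+v_0+w$ both lie in $K$, giving a segment in $K$ of length $|v_0+w|>1$; consequently $I_p$ has \emph{non-empty interior} for every $p$ in the open region $P=\{p:\langle p,v_0+w\rangle>1\}\cup\{p:\langle p,v_0-w\rangle>1\}$, and such $p$ are reachable from $v_0$ by an explicit path. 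Thus from an intermediate point one can immediately jump to the easy full-dimensional case in every direction except those tangent to the two spheres bounding $P$, i.e.\ directions orthogonal to $w$. The remaining work is then not a local construction at all but a \emph{perturbation of the path}: Lemma~\ref{lemma_pathsavoidingbadcase} shows reachability along any $C^1$ path whose tangent at each intermediate point is not orthogonal to the corresponding $w$, and Lemmas~\ref{lemma_takingchartsnew}--\ref{lemma_Baire} use a Baire-category argument (the ball of radius $|w_x|$ centred at $(t_x,x+w_x)$ excludes bad points) to show that a generic straight segment in a chart avoids all bad tangencies. No induction on $k$ or $d$ is used. Your proposed direct coordinated motion would have to succeed precisely in the ``forbidden'' tangent directions where the paper instead perturbs away, and you give no mechanism for that; the construction in Theorem~\ref{theorem_continuouscounterexample} suggests this is where a naive approach will break.
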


Note that the counterexample in Theorem~\ref{theorem_continuouscounterexample} shows that it is not necessarily true that $v'$ is reachable from $v$ along $\gamma$ (or along a path $0$-close to $\gamma$).\medskip

We now briefly discuss our approach to proving Theorem~\ref{theorem_reachable}. It is easy to see that if $p\in \Sph^{d-1}$ is such that $I_p$ has non-empty interior, then every $p'$ in some neighbourhood of $p$ is reachable from $p$. Furthermore, it is not difficult to deal with points $p$ such that $I_p$ is a single point. This means that the complicated case is when $I_p$ is not a single point, but has empty interior (i.e., its dimension is between $1$ and $d-1$). We will prove (Lemma~\ref{lemma_spheresaroundtypeline}) that in the neighbourhood of such points $p$, there are `many' points $q$ with $I_q$ having non-empty interior. Moreover, we will show that if for such a $p$ we start moving on the sphere $\Sph^{d-1}$ from $p$ in some direction, then for `most' directions we initially only encounter points $q$ such that $I_q$ has non-empty interior, and that these $q$ are reachable from $p$. We will deduce (Lemma~\ref{lemma_pathsavoidingbadcase}) that Theorem~\ref{theorem_reachable} holds for $\gamma$ if for all points $v$ on $\gamma$ such that $I_v$ has empty interior and is not a single point, the tangent to $\gamma$ at $v$ is not in some special set of `forbidden' directions. Finally, we will show that we can perturb $\gamma$ slightly to make sure that we avoid such cases. We note that in some sense we can have `many' points $p\in \Sph^{d-1}$ such that $I_p$ is not a single point but has empty interior. For example, if $K=\{(x,y,z)\in \RR^3: x\in[-1,1],y^2+z^2\leq 1/4\}$, then all $p$ along a great circle have this property.

We believe the reader will not lose much by focusing on the case $d=3$: some of the lemmas are easier to visualise and prove in that case, but the main ideas of the proof are the same.\medskip

Let us start with some simple observations.

\begin{lemma}\label{lemma_canchooseendpoints}
	Suppose that $v,v'\in \Sph^{d-1}$ and $\gamma:[0,1]\to \Sph^{d-1}$ are such that $v'$ is reachable from $v$ along $\gamma$. Let $u\in I_v$, $u'\in I_{v'}$ be arbitrary. Then $(v',u')$ is reachable from $(v,u)$ along a path which has the same image as $\gamma$ (and hence is $0$-close to $\gamma$). 
\end{lemma}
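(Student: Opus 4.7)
The plan is to exploit the convexity of $I_v$ and $I_{v'}$ (noted at the start of Section~\ref{section_Kakeyareachable}) to adjust the starting and ending points of a given lift without changing the image of the direction path. By hypothesis, there exist some $\bar u\in I_v$, $\bar u'\in I_{v'}$ and a continuous $\delta:[0,1]\to K$ with $\delta(t)\in I_{\gamma(t)}$ for all $t$, $\delta(0)=\bar u$ and $\delta(1)=\bar u'$. The idea is to prepend a segment of motion that stays at direction $v$ while translating from $u$ to $\bar u$ inside $I_v$, and to append an analogous segment at direction $v'$ moving from $\bar u'$ to $u'$ inside $I_{v'}$.

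Concretely, I would define $\tilde\gamma,\tilde\delta:[0,1]\to \Sph^{d-1}\times K$ piecewise on the intervals $[0,1/3]$, $[1/3,2/3]$, $[2/3,1]$, setting
\begin{itemize}
\item $\tilde\gamma(t)=v$, $\tilde\delta(t)=(1-3t)u+3t\,\bar u$ for $t\in[0,1/3]$,
\item $\tilde\gamma(t)=\gamma(3t-1)$, $\tilde\delta(t)=\delta(3t-1)$ for $t\in[1/3,2/3]$,
\item $\tilde\gamma(t)=v'$, $\tilde\delta(t)=(3-3t)\bar u'+(3t-2)u'$ for $t\in[2/3,1]$.
\end{itemize}
Continuity at the joins $t=1/3$ and $t=2/3$ is immediate since both pieces agree there. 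On $[0,1/3]$ the point $\tilde\delta(t)$ lies in $I_v$ by convexity of $I_v$, so $\tilde\delta(t)\in I_{\tilde\gamma(t)}$; the symmetric check works on $[2/3,1]$ using convexity of $I_{v'}$; on the middle interval the property is inherited from $\delta$.

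Finally, the image of $\tilde\gamma$ is $\{v\}\cup\gamma([0,1])\cup\{v'\}$, which equals $\gamma([0,1])$ because $\gamma(0)=v$ and $\gamma(1)=v'$. Hence $\tilde\gamma$ has the same image as $\gamma$, and in particular is $0$-close to $\gamma$. Since $\tilde\delta(0)=u$ and $\tilde\delta(1)=u'$, this shows $(v',u')$ is reachable from $(v,u)$ along $\tilde\gamma$, as required. There is no real obstacle here: the statement is essentially a bookkeeping lemma whose only substantive ingredient is the convexity of the fibres $I_v$.
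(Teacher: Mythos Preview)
Your proof is correct and essentially identical to the paper's own argument: both prepend and append constant-direction segments on $[0,1/3]$ and $[2/3,1]$, using convexity of $I_v$ and $I_{v'}$ to interpolate linearly between the given and desired endpoints, with the original $(\gamma,\delta)$ rescaled onto $[1/3,2/3]$. Your explicit remark that $\{v\}\cup\gamma([0,1])\cup\{v'\}=\gamma([0,1])$ is a nice touch the paper leaves implicit.
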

\begin{proof}
	Let $w\in I_v$, $w'\in I_{v'}$ and $\delta: [0,1]\to K$ be such that $\delta$ is continuous, $\delta(t)\in I_{\gamma(t)}$ for all $t$, $\delta(0)=w$ and $\delta(1)=w'$. Define $\gamma'$ and $\delta'$ by setting
	\begin{equation*}
	\gamma'(t)=
	\begin{cases*}
	v & if $t\in[0,1/3]$\\
	\gamma(3(t-1/3)) & if $t\in[1/3,2/3]$\\
	v' & if $t\in [2/3,1]$
	\end{cases*}
	\end{equation*}
	and
	\begin{equation*}
	\delta'(t)=
	\begin{cases*}
	(1-3t)u+3tw & if $t\in[0,1/3]$\\
	\delta(3(t-1/3)) & if $t\in[1/3,2/3]$\\
	(1-3(t-2/3))w'+3(t-2/3)u' & if $t\in [2/3,1].$
	\end{cases*}
	\end{equation*}
	The statement of the lemma follows easily, using that $I_v, I_{v'}$ are convex.
\end{proof}
Note that Lemma~\ref{lemma_canchooseendpoints} implies that if $v'$ is reachable from $v$ (along some path which is $\epsilon$-close to $X$) and $v''$ is reachable from $v'$ (along some path which is $\epsilon$-close to $Y$) then $v''$ is reachable from $v$ (along some path which is $\epsilon$-close to $X\cup Y$).
\begin{lemma}\label{lemma_nicetype}
	Assume that $V\subseteq \Sph^{d-1}$ is such that for all $v\in V$, $I_v$ has non-empty interior. Assume furthermore that $\gamma: [0,1]\to V$ is continuous. Then $\gamma(1)$ is reachable from $\gamma(0)$ along a path which is $0$-close to $\gamma$.
\end{lemma}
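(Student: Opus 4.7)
The plan is to use the non-empty interior of each $I_{\gamma(t)}$ together with compactness of $[0,1]$ to patch together a continuous selection $\delta$ along $\gamma$ itself; since we use $\gamma$ as the sphere-path, it will automatically be $0$-close to $\gamma$.

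First I would produce local selections that are valid on a neighbourhood in the parameter. For each $t\in[0,1]$, pick a point $u_t$ in the interior of $I_{\gamma(t)}$ and a radius $r_t>0$ with $B_{r_t}(u_t)\subseteq I_{\gamma(t)}$. This means $u_t + \gamma(t) + B_{r_t}(0)\subseteq K$, so for any $s$ with $|\gamma(s)-\gamma(t)|<r_t$ we get $u_t+\gamma(s)\in K$, and since also $u_t\in K$, we conclude $u_t\in I_{\gamma(s)}$. Continuity of $\gamma$ then gives an open neighbourhood $J_t$ of $t$ in $[0,1]$ on which the single point $u_t$ is a valid selection.

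Next, by compactness of $[0,1]$, the cover $\{J_t\}_{t\in[0,1]}$ has a finite subcover, and a Lebesgue-number argument yields a partition $0=r_0<r_1<\dots<r_N=1$ and indices $t_1,\dots,t_N$ with $[r_{i-1},r_i]\subseteq J_{t_i}$ for every $i$. Write $u_i=u_{t_i}$, and choose $\eta>0$ small enough that the transition windows $[r_i-\eta,\,r_i+\eta]$ ($i=1,\dots,N-1$) are pairwise disjoint, contained in $(0,1)$, and each lies in $J_{t_i}\cap J_{t_{i+1}}$. Define $\delta:[0,1]\to K$ piecewise: set $\delta(t)=u_i$ on the plateau $[r_{i-1}+\eta,\,r_i-\eta]$ (with the convention $r_0+\eta=0$ and $r_N-\eta=1$), and on each transition window let $\delta$ be the affine interpolation from $u_i$ to $u_{i+1}$. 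Continuity is immediate. On a plateau, $\delta(t)=u_i\in I_{\gamma(t)}$ because $[r_{i-1},r_i]\subseteq J_{t_i}$; on a transition, both $u_i$ and $u_{i+1}$ lie in $I_{\gamma(t)}$ because the window lies in $J_{t_i}\cap J_{t_{i+1}}$, and their convex combination is again in $I_{\gamma(t)}$ since $I_{\gamma(t)}$ is convex. Thus $(\gamma,\delta)$ witnesses that $(\gamma(1),u_N)$ is reachable from $(\gamma(0),u_1)$ along $\gamma$, as required.

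I do not expect any real obstacle in this lemma — the argument is a standard compactness-and-patching scheme. The only point that needs any thought is the observation that the interior-point condition $B_{r_t}(u_t)\subseteq I_{\gamma(t)}$ is genuinely open in the parameter $t$, which is what lets a single selection $u_t$ persist on a whole neighbourhood; convexity of $I_{\gamma(t)}$ then absorbs the splicing between consecutive local selections on the overlaps.
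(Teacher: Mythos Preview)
Your argument is correct and follows essentially the same approach as the paper: pick an interior point $u_t\in I_{\gamma(t)}$, observe it remains a valid selection on a parameter-neighbourhood, use compactness to partition $[0,1]$, and glue using convexity of the $I_v$'s. The only cosmetic difference is that the paper glues by pausing $\gamma$ at the partition nodes and invoking Lemma~\ref{lemma_canchooseendpoints}, whereas you interpolate on overlap windows while $\gamma$ keeps moving; your version is slightly more direct and yields a selection along $\gamma$ itself without reparametrization.
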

\begin{proof}
	For all $t\in [0,1]$ we can find some $r_t>0$, $p_t\in K$ such that $I_{\gamma(t)}$ contains an open ball of radius $r_t$ around $p_t$, i.e., whenever $|z|<r_t$ then $p_t+z, p_t+z+\gamma(t)\in K$. It follows that whenever $|\gamma(s)-\gamma(t)|<r_t$ then $p_t, p_t+\gamma(s)\in K$, i.e., $p_t\in I_{\gamma(s)}$. Let $\eta_t>0$ be such that $|\gamma(s)-\gamma(t)|<r_t$ whenever $|s-t|<\eta_t$. By compactness of $[0,1]$, we can find 
	some $r>0$ such that whenever $s\in[0,1]$ then there is some $t_s\in [0,1]$ such that $|s-t_s|\leq \eta_{t_s}-r$. Pick some $N>1/r$ integer, and let $x(i)=i/N$ ($i=0,\dots,N$). Then $\gamma(x({i+1}))$ is reachable from $\gamma(x(i))$ along a path which has the same image is $\gamma|_{[x(i),x({i+1})]}$ (the corresponding function $\delta$ is constant $p_{t_{x(i)}}$). Using Lemma~\ref{lemma_canchooseendpoints} several times,  and concatenating the appropriate paths, we get that $\gamma(1)$ is reachable from $\gamma(0)$ along a path with the same image as $\gamma$.
\end{proof}
In light of Lemma~\ref{lemma_nicetype}, finding points $v$ such that $I_v$ has non-empty interior is useful for proving reachability. The next lemma gives a convenient condition for checking that $I_v$ has non-empty interior.
\begin{lemma}\label{lemma_longsegment}
	If $v\in \Sph^{d-1}$ and there is some $\lambda>1$ and $u\in K$ such that $u+\lambda v\in K$, then $I_v$ has non-empty interior.
\end{lemma}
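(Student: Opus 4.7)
The plan is to combine the hypothesised long segment of length $\lambda$ in direction $v$ with a small ball inside $K$ to exhibit an open subset of $I_v$. First I would observe that since $K$ is Kakeya, it contains unit segments in all directions and hence in $d$ linearly independent directions, so $K$ is a full-dimensional convex body in $\RR^d$ and admits an open ball $B(q,r)\subseteq K$ with $r>0$. By convexity, the whole segment $L=[u,u+\lambda v]$ lies in $K$.

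I would then fix any $\alpha\in(1/\lambda,1)$ (possible because $\lambda>1$) and consider the Minkowski sum
\[ W \;=\; \alpha\,[u,\,u+(\lambda-1/\alpha)v] \;+\; (1-\alpha)\,B(q,r). \]
Since this is the sum of a non-degenerate segment with an open ball of radius $(1-\alpha)r>0$, the set $W$ is open and non-empty. The aim is to show $W\subseteq I_v$, which would immediately give the conclusion.

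To verify the containment, take $w\in W$, so $w=\alpha(u+sv)+(1-\alpha)b$ for some $s\in[0,\lambda-1/\alpha]$ and some $b\in B(q,r)\subseteq K$. Then $w\in K$ by convexity. For the translate, the key algebraic identity is
\[ w+v \;=\; \alpha\bigl(u+(s+1/\alpha)v\bigr)+(1-\alpha)b, \]
obtained by absorbing the extra $v$ into the $\alpha$-weighted term. Because $s+1/\alpha$ lies in $[1/\alpha,\lambda]\subseteq[0,\lambda]$, the point $u+(s+1/\alpha)v$ lies on $L\subseteq K$; together with $b\in K$ and convexity this gives $w+v\in K$. Thus $w\in I_v$.

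I do not expect any real obstacle: once one sees that a length of $1/\alpha$ along $L$ must be ``reserved'' to absorb the shift by $v$ inside the convex combination, everything reduces to a short calculation. The only point to monitor is ensuring $\lambda-1/\alpha>0$, which is exactly the reason the hypothesis is $\lambda>1$ rather than $\lambda\ge 1$.
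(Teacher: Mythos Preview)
Your argument is correct. The key step---absorbing the shift by $v$ into the $\alpha$-weighted point on the long segment $L$---is exactly what is needed, and the choice $\alpha\in(1/\lambda,1)$ guarantees both that the remaining segment $[u,u+(\lambda-1/\alpha)v]$ is non-empty and that the ball contributes a positive radius $(1-\alpha)r$. One cosmetic remark: the non-degeneracy of the segment is irrelevant for the openness of $W$, since the Minkowski sum of \emph{any} set with an open ball is open.

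The paper takes a slightly different route. After translating so that $u=0$, it observes directly that $(1-1/\lambda)p\in I_v$ for every $p\in K$, because $(1-1/\lambda)p$ and $(1-1/\lambda)p+v=(1-1/\lambda)p+(1/\lambda)(\lambda v)$ are both convex combinations of $p$ with an endpoint of $L$. This yields the clean containment $(1-1/\lambda)K\subseteq I_v$; only afterwards does the paper invoke the Kakeya property (segments in every coordinate direction, combined by convexity) to conclude that this scaled copy of $K$ has non-empty interior. So the two proofs use the same ingredients---Kakeya for full-dimensionality, convex combinations with $L$ for membership in $I_v$---but in opposite orders: you extract a ball from $K$ first and then build an open set in $I_v$, while the paper first maps all of $K$ into $I_v$ and then argues the image is full-dimensional. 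The paper's version has the advantage of exhibiting the structural fact $(1-1/\lambda)K\subseteq I_v$; yours is arguably more transparent once the ball is in hand.
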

\begin{proof}
	We may assume that $u=0$. If $p\in K$, then $(1-1/\lambda)p\in K$ and $(1-1/\lambda)p+(1/\lambda) \lambda v\in K$ by convexity, so $(1-1/\lambda)p\in I_{v}$. Given some $w\in \Sph^{d-1}$, there are points $p_1,p_2\in K$ such that $p_2-p_1=w$. Then $(1-1/\lambda)p_i\in I_{v}$ for $i=1,2$, and therefore $I_v$ contains two points $q_1, q_2$ with $q_2-q_1=(1-1/\lambda)w$. So we can pick $e_1, f_1, \dots, e_d, f_d\in I_v$ such that $f_i-e_i$ is the vector with all coordinates zero, except the $i$th coordinate, which is $1-1/\lambda$. Let $c=\frac{1}{2d}\sum(e_i+f_i)$. By convexity of $I_v$, it is easy to see that $c\in I_v$, and whenever $|x_i|\leq \frac{1}{2d}(1-1/\lambda)$ for all $i$ then $c+(x_1,\dots,x_d)\in I_v$. So $I_v$ contains a ball of radius $\frac{1}{2d}(1-1/\lambda)$ around $c$.
\end{proof}
The following useful lemma gives another condition for finding $v$ such that $I_v$ has non-empty interior, and it also gives some restrictions on what $I_v$ can look like when $I_v$ has empty interior: $I_v-I_v$ must be perpendicular to $v$.
\begin{lemma}\label{lemma_largeinnerproducts}
	Suppose that $p\in \Sph^{d-1}$, $x,q\in \RR^d$ such that $\langle p,q\rangle>1$ and $x, x+q\in K$. Then $I_p$ has non-empty interior.
	
	In particular, if $v\in \Sph^{d-1}$ and $u,w\in \RR^d$ such that $\langle v,w\rangle \not =0$ and $u,u+w\in I_v$, then $I_v$ has non-empty interior.
\end{lemma}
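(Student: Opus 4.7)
The plan is to reduce the first statement to Lemma~\ref{lemma_longsegment} by exhibiting some $\lambda>1$ with $\lambda p\in K-K$ (equivalently, some $u\in K$ with $u+\lambda p\in K$). The hypothesis gives $q\in K-K$, and since $K$ is Kakeya every unit vector lies in $K-K$; so, using that $K-K$ is convex and contains $0$, the closed unit ball $B(0,1)$ sits inside $K-K$. Hence $K-K\supseteq \mathrm{conv}(B(0,1)\cup\{q\})$, and it is enough to find $\lambda p$ in this convex hull with $\lambda>1$.

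The main step -- and essentially the only nontrivial part of the argument -- is a planar geometry computation. I would work inside the plane $\Pi$ spanned by $p$ and $q$ (the case $q\parallel p$ is immediate, since then $q=cp$ with $c>1$ and we already have $cp\in K-K$). Choose coordinates on $\Pi$ so that $p=(1,0)$ and $q=(c,s)$, where $c=\langle p,q\rangle>1$ and (without loss of generality) $s>0$. For small $\delta>0$ the unit vector $v_\delta=(\cos\delta,-\sin\delta)$ lies in $B(0,1)\cap\Pi$, and the segment from $v_\delta$ to $q$ meets the first coordinate axis at the point $(\lambda(\delta),0)$ with
\[
\lambda(\delta)=\frac{s\cos\delta+c\sin\delta}{s+\sin\delta}.
\]
The hard part, such as it is, is the observation that $\lambda(0)=1$ while $\lambda'(0)=(c-1)/s>0$, so $\lambda(\delta)>1$ for all sufficiently small $\delta>0$: the hypothesis $\langle p,q\rangle>1$ is used here precisely to make $c-1>0$ and hence the initial derivative positive. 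This produces the desired $\lambda p\in K-K$ with $\lambda>1$, and Lemma~\ref{lemma_longsegment} then closes the first statement.

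For the ``in particular'' part I would simply reduce to the first statement. The hypothesis $u,u+w\in I_v$ unfolds to the four points $u,u+v,u+w,u+v+w\in K$. If $\langle v,w\rangle>0$ I apply the first part with $x=u$ and the vector $v+w$ (noting $x+q=u+v+w\in K$), which satisfies $\langle v,v+w\rangle=1+\langle v,w\rangle>1$; if $\langle v,w\rangle<0$ I instead take $x=u+w$ and the vector $v-w$ (with $x+q=u+v\in K$), giving $\langle v,v-w\rangle=1-\langle v,w\rangle>1$.
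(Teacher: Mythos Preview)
Your proof is correct and rests on the same geometric idea as the paper's: combine $q$ with a unit vector near $p$ on the opposite side from $q$ to land on $\lambda p$ with $\lambda>1$, then invoke Lemma~\ref{lemma_longsegment}. The paper does this directly with points in $K$: it sets $p'=(p-\epsilon q)/|p-\epsilon q|$, uses the Kakeya property to get a segment in direction $p'$, and takes an explicit convex combination with $[x,x+q]$, checking algebraically that $|p-\epsilon q|<1-\epsilon$ for small $\epsilon$ (this is where $\langle p,q\rangle>1$ enters) so that the resulting segment in direction $p$ has length $1/(\epsilon+|p-\epsilon q|)>1$. Your route via the difference body $K-K\supseteq B(0,1)\cup\{q\}$ is a tidy repackaging of the same thing: your $v_\delta$ plays the role of $p'$, and your derivative computation $\lambda'(0)=(c-1)/s>0$ is the infinitesimal version of the paper's inequality. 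The difference-body framing strips away the bookkeeping of basepoints in $K$ and makes the reduction to a planar picture transparent; the paper's version is more explicit but slightly heavier in notation. Your treatment of the ``in particular'' clause matches the paper's exactly.
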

\begin{proof}
	Let $\epsilon\in (0,1)$ be small enough so that $0\not =|p-\epsilon q|<1-\epsilon$. Note that such an $\epsilon$ exists, since $|p-\epsilon q|^2=1-2\langle p,q\rangle\epsilon+|q|^2\epsilon^2$ is less than $1-2\epsilon+\epsilon^2$ for $\epsilon$ small enough, as $\langle p,q\rangle>1$. Let $p'=\frac{p-\epsilon q}{|p-\epsilon q|}$. Note that $|p'|=1$. We know that there is some $y\in \RR^d$ such that $y,y+p'\in K$. Let
	\begin{align*}
	z&=\frac{\epsilon}{\epsilon+|p-\epsilon q|} x+\frac{|p-\epsilon q|}{\epsilon+|p-\epsilon q|}y,\\
	z'&=\frac{\epsilon}{\epsilon+|p-\epsilon q|} (x+q)+\frac{|p-\epsilon q|}{\epsilon+|p-\epsilon q|}(y+p').\\
	\end{align*}
	Then $z,z'\in K$ by convexity. But
	\begin{align*}z'-z&=\frac{\epsilon}{\epsilon+|p-\epsilon q|} q+\frac{|p-\epsilon q|}{\epsilon+|p-\epsilon q|}p'\\
	&=\frac{1}{\epsilon+|p-\epsilon q|}p
	\end{align*}
	But $\frac{1}{\epsilon+|p-\epsilon q|}>1$, so $I_p$ has non-empty interior by Lemma \ref{lemma_longsegment}.
	
	For the final part of the lemma, we may assume $\langle v,w\rangle >0$ (otherwise replace $u$ by $u+w$ and $w$ by $-w$). But then $u, u+v+w\in K$, so we can apply the first part of the lemma with $p=v$, $q=v+w$, $x=u$.
\end{proof}

\subsection{The main lemmas}
The following lemma is one of the key observations. Essentially, it says that if $I_v$ has empty interior but is not a single point, then for `most' points $p$ around $v$ the set $I_p$ has non-empty interior, and those $p$ are reachable from $v$.
\begin{lemma}\label{lemma_spheresaroundtypeline}
	Suppose that $v\in \Sph^{d-1}$, and $u,w\in \RR^d$ such that $u
	,u+w\in I_v$, $0<|w|<1$ and $\langle v,w\rangle=0$. Let $P=\{p\in \Sph^{d-1}: \langle p, v+w \rangle > 1 \}\cup \{p\in \Sph^{d-1}: \langle p, v-w \rangle > 1 \}$. Then $I_p$ has non-empty interior for all $p\in P$. Moreover, whenever $p\in P$, then $p$ is reachable from $v$ along a path which is $2|p-v|$-close to $\{v\}$.
\end{lemma}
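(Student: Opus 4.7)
My plan has two parts. The easier part is showing $I_p$ has non-empty interior: this follows from Lemma \ref{lemma_largeinnerproducts}. When $\langle p, v+w\rangle > 1$, apply that lemma with $q = v+w$ and $x = u$, noting that $u, u+w \in I_v$ forces $u, u+w+v = u+v+w \in K$. Symmetrically, when $\langle p, v-w\rangle > 1$, apply it with $q = v-w$ and $x = u+w$, using $u+w, u+w + (v-w) = u+v \in K$.

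For the reachability assertion, assume WLOG $\langle p, v+w\rangle > 1$. Write $\pi_v = \langle p, v\rangle$ and $\pi_w = \langle p, w/|w|\rangle$, and set $\alpha = (1-\pi_v)/(\pi_w |w|)$. The hypothesis $\pi_v + \pi_w|w| > 1$ and $p \neq v$ forces $\pi_v < 1$ and $\pi_w > 0$, so $\alpha \in (0,1)$. Also pick $x \in I_p$ (possible by the first part). I will use a two-leg path, concatenated at the intermediate point $v_\alpha := (v+\alpha w)/|v+\alpha w|$.

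\emph{Leg 1}: $\gamma_1(s) = (v + s\alpha w)/|v + s\alpha w|$ for $s\in[0,1]$, with $\delta_1(s) = u$. Since $s\alpha \in [0,1]$, convexity gives $u + v + s\alpha w \in K$ (convex combination of $u+v$ and $u+v+w$). As $|v + s\alpha w| \geq 1$, the long-segment argument of Lemma \ref{lemma_longsegment} yields $u + \gamma_1(s) \in K$, so $u \in I_{\gamma_1(s)}$. \emph{Leg 2}: the spherical geodesic $\gamma_2(t) = r(t)/|r(t)|$ with $r(t) = (1-t)(v + \alpha w) + tp$, with $\delta_2(t) = (1-t)u + tx$. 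Convexity gives $\delta_2(t), \delta_2(t) + r(t) \in K$ (combinations of $u, u+v+\alpha w, x, x+p$, all in $K$). The choice of $\alpha$ is tuned precisely so that $\langle v + \alpha w, p\rangle = 1$, which makes $|r(t)|^2 = (1-t)^2|v+\alpha w|^2 + 2t(1-t) + t^2$ a convex quadratic whose minimum on $[0,1]$ is at $t=1$ with value $1$; hence $|r(t)| \geq 1$ and again the long-segment argument gives $\delta_2(t) \in I_{\gamma_2(t)}$. The endpoints match ($\delta_1(1) = u = \delta_2(0)$, $\gamma_1(1) = v_\alpha = \gamma_2(0)$), yielding a valid reachability path from $(v,u)$ to $(p,x)$.

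For the closeness bound, along Leg 1 the chord distance $|\gamma_1(s) - v|$ is monotone in $s$ with maximum $|v_\alpha - v|$. Along Leg 2 the inner product $\langle \gamma_2(\phi), v\rangle$ (parameterizing by spherical angle $\phi$) has the form $a\cos\phi + b\sin\phi$, so its minimum over the geodesic arc is attained at one of the endpoints; hence the maximum of $|\gamma_2(\phi) - v|$ on Leg 2 is $\max(|v_\alpha - v|, |p - v|)$. It therefore suffices to prove $|v_\alpha - v| \leq 2|p - v|$, equivalently $2(1 - 1/|v+\alpha w|) \leq 8(1-\pi_v)$. This is the main obstacle: substituting $\alpha|w| = (1-\pi_v)/\pi_w$ and using the constraint $\pi_v + \pi_w |w| > 1$ together with $|w| < 1$ requires a delicate algebraic estimate, most sensitive in the regime where $|p-v|$ is small and $\pi_w$ is close to its lower bound $(1-\pi_v)/|w|$.
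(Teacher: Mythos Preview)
Your first part (non-empty interior) is correct and identical to the paper's. Your two-leg path is also a valid reachability path from $(v,u)$ to $(p,x)$: the verifications that $\delta_i(t)\in I_{\gamma_i(t)}$ all go through as you describe (though the justification on Leg~2 that the sinusoid attains its minimum at an endpoint is not automatic---you should note that $\langle\gamma_2(t),v\rangle=(1-t(1-\pi_v))/|r(t)|>0$ throughout, whereas an interior minimum of $R\cos(\phi-\phi_0)$ would equal $-R\le 0$).

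The genuine gap is the final inequality $|v_\alpha-v|\le 2|p-v|$: it is simply false for your choice of $\alpha$. With $\epsilon=1-\pi_v$ the inequality reads $1-1/\sqrt{1+\epsilon^2/\pi_w^2}\le 4\epsilon$, and when $\pi_w$ is close to its lower bound $\epsilon/|w|$ this left side is close to the \emph{constant} $1-1/\sqrt{1+|w|^2}$, while the right side can be made arbitrarily small. (Concretely: $|w|=0.9$, $\epsilon=0.01$, $\pi_w=0.012$ gives left side $\approx 0.23$ and right side $0.04$.) Moreover, your Leg~2 argument \emph{forces} this choice of $\alpha$: one computes $|r(t)|^2-1=(1-t)\bigl[(1-t)\alpha^2|w|^2+2t(\pi_v+\alpha|w|\pi_w-1)\bigr]$, so for $|r(t)|\ge 1$ near $t=1$ you need $\alpha\ge \epsilon/(\pi_w|w|)$. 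Thus the explicit convex-combination $\delta_2$ cannot be rescued by shrinking $\alpha$.

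The paper's fix is exactly to decouple the ``how far to move in the $w$-direction'' parameter from $p$: it moves only to $v+\lambda w$ with $\lambda>0$ a free small parameter, and on the second leg it does \emph{not} write down an explicit $\delta$ but instead shows $\langle q(s),v+w\rangle>1$ for every $s$, so that $I_{q(s)}$ has non-empty interior and Lemma~\ref{lemma_nicetype} furnishes the required $\delta$. The distance bound then tends to $|p-v|$ as $\lambda\to 0^+$, so $2|p-v|$ is achieved for $\lambda$ small enough. You could repair your proof along the same lines: keep Leg~1 with an arbitrarily small coefficient on $w$, and replace your explicit Leg~2 by an appeal to Lemma~\ref{lemma_nicetype} after checking that $\langle\cdot,v+w\rangle>1$ persists along that leg.
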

Note that the condition $\langle v,w\rangle=0$ holds automatically when $I_v$ has empty interior by the final part of Lemma~\ref{lemma_largeinnerproducts}. Figure~\ref{figure_Kakeyaspheres} shows the set $P$ in the case $d=3$.
\begin{figure}[h!]
	\includegraphics[clip,trim=0cm 0cm 0cm 0cm, width=0.3\linewidth]{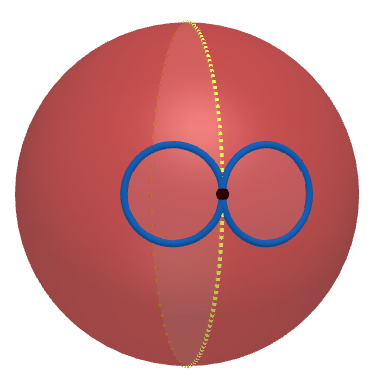}
	\centering
	\captionsetup{justification=centering}
	\caption{The set $P$ in Lemma~\ref{lemma_spheresaroundtypeline} is the region enclosed by the two blue circles ($d=3$). The point $v$ is the intersection of the two circles, and $w$ is parallel to the line connecting the centres of the blue circles. The yellow dotted great circle gives the only direction (for $d=3$) not pointing to the inside of the two circles.}
	\label{figure_Kakeyaspheres}
\end{figure}
\begin{proof}
	The claim that $I_p$ has non-empty interior for all $p\in P$ follows directly from Lemma~\ref{lemma_largeinnerproducts}. For the second claim, let $p\in P$ be arbitrary. We may assume $\langle p,v+w\rangle>1$ (otherwise replace $u$ by $u+w$ and $w$ by $-w$). Note that $\langle v,p\rangle=\langle v+w,p\rangle-\langle w,p\rangle>1-1=0$, and similarly $\langle w,p\rangle>0$. Pick some small $\lambda\in (0,1)$ (to be specified later). It is easy to see that if we write $\gamma(t)=\frac{v+2t\lambda w}{|v+2t\lambda w|}$ and $\delta(t)=u$ for all $t\in [0,1/2]$, then $\delta(t)\in I_{\gamma(t)}$ for all $t\in [0,1/2]$. Furthermore, if we write $q(s)=\frac{(1-s)(v+\lambda w)+sp}{|(1-s)(v+\lambda w)+sp|}$ for all $s\in [0,1]$, then $\langle q(s), v+w\rangle>1$ for all $s\in [0,1]$. Indeed, it is easy to check that $\langle (1-s)(v+\lambda w)+sp, v+w\rangle> 0$, and we have
	\begin{align*}
	|(1-s)(v+\lambda w)+sp|^2 &=(1-s)^2(1+\lambda^2|w|^2)+s^2+2s(1-s)\langle p,v+\lambda w\rangle\\
	&\leq (1-s)^2(1+\lambda^2|w|^2)+s^2+2s(1-s)\langle p, v+w\rangle
	\end{align*}
	and	
	\begin{align*}
	\langle (1-s)(v+\lambda w)&+sp,v+w\rangle^2 =\left((1-s)(1+\lambda|w|^2)+s\langle p,v+w\rangle\right)^2\\
	&=(1-s)^2(1+\lambda |w|^2)^2+s^2\langle p,v+w\rangle^2+2s(1-s)(1+\lambda |w|^2)\langle p,v+w\rangle\\
	&> (1-s)^2(1+\lambda|w|^2)+s^2+2s(1-s)\langle p,v+w\rangle\\
	&\geq (1-s)^2(1+\lambda^2|w|^2)+s^2+2s(1-s)\langle p, v+w\rangle,
	\end{align*}
	giving $\langle q(s), v+w\rangle^2>1$.
	
	So $I_{q(s)}$ has non-empty interior for all $s$. Using Lemma~\ref{lemma_nicetype} (and Lemma~\ref{lemma_canchooseendpoints}), it is easy to deduce that we can extend $\gamma, \delta$ to $[0,1]$ such that $\delta(t)\in I_{\gamma(t)}$ for all $t$ and for all $t\geq 1/2$ there is some $s\in [0,1]$ such that $\gamma(t)=q(s)$.
	
	Now, if $t\leq 1/2$, then
	\begin{align*}
	|v-\gamma(t)|^2&=\left|v-\frac{v+2t\lambda w}{|v+2t\lambda w|}\right|^2\\
	&=2-2\left\langle v,\frac{v+2t\lambda w}{|v+2t\lambda w|}\right\rangle\\
	&=2-\frac{2}{|v+2t\lambda w|}\\
	&=2-\frac{2}{(1+(2t\lambda|w|)^2)^{1/2}}\\
	&\leq 2-\frac{2}{(1+\lambda^2|w|^2)^{1/2}}.
	\end{align*}
	
	Furthermore, if $t\geq 1/2$ and $\gamma(t)=q(s)$, then
	\begin{align*}
	|v-\gamma(t)|^2&=\left|v- \frac{(1-s)(v+\lambda w)+sp}{|(1-s)(v+\lambda w)+sp|}\right|^2\\
	&=2-2\left\langle v, \frac{(1-s)(v+\lambda w)+sp}{|(1-s)(v+\lambda w)+sp|}\right\rangle\\
	&=2-2\frac{(1-s)+s\langle v,p\rangle}{|(1-s)(v+\lambda w)+sp|}\\
	&\leq 2-2\frac{\langle v,p\rangle}{|(1-s)(v+\lambda w)+sp|}\\
	&\leq 2-2\frac{\langle v,p\rangle}{\max\{|v+\lambda w|,|p|\}}\\
	&=2-\frac{2\langle v,p\rangle}{(1+\lambda^2|w|^2)^{1/2}}.
	\end{align*}
	It follows that $|v-\gamma(t)|\leq \left(2-\frac{2\langle v,p\rangle}{(1+\lambda^2|w|^2)^{1/2}}\right)^{1/2}$ for all $t$. But $\lambda\in (0,1)$ was arbitrary, and taking $\lambda\to 0^+$ we have $\left(2-\frac{2\langle v,p\rangle}{(1+\lambda^2|w|^2)^{1/2}}\right)^{1/2}\to (2-2\langle v,p\rangle )^{1/2}=|v-p|$. It follows that we can choose $\lambda$ such that $|v-\gamma(t)|\leq 2|v-p|$ for all $t$.
\end{proof}

The next lemma is one of the main corollaries of Lemma~\ref{lemma_spheresaroundtypeline}.

\begin{lemma}\label{lemma_pathsavoidingbadcase}
	Let $\epsilon>0$, and suppose that $\gamma:[0,1]\to \Sph^{d-1}$ is continuously differentiable such that for all $t\in [0,1]$, one of the following holds.
	\begin{enumerate}
		\item $I_{\gamma(t)}$ has non-empty interior;
		\item $I_{\gamma(t)}$ has empty interior, but there exist $u,w\in \RR^d$ such that $u,u+w\in I_{\gamma(t)}$
		and $\langle w,\gamma'(t)\rangle\not =0$;
		\item $I_{\gamma(t)}$ is a single point.
	\end{enumerate}
	Then $\gamma(1)$ is reachable from $\gamma(0)$ along a path which is $\epsilon$-close to $\gamma$.
\end{lemma}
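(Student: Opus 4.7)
My plan is to prove the lemma by a compactness argument. The key step is to establish \emph{local reachability} at each $t \in [0,1]$: I would find $\eta_t > 0$ such that for any $a, b \in (t-\eta_t, t+\eta_t) \cap [0,1]$ with $a \le t \le b$, the orientation $\gamma(b)$ is reachable from $\gamma(a)$ along a path $\epsilon/2$-close to $\gamma$. Given this, compactness of $[0,1]$ would yield a finite subcover by such intervals; after ordering them so consecutive ones overlap and choosing overlap points, the remark following Lemma~\ref{lemma_canchooseendpoints} lets me chain the local reachabilities into a global reachability of $\gamma(1)$ from $\gamma(0)$ along a path $\epsilon$-close to $\gamma$.

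For Case 1 at $t$ I would mirror the proof of Lemma~\ref{lemma_nicetype}: pick $p$ and $r > 0$ with $B_r(p) \subseteq I_{\gamma(t)}$ and shrink $\eta_t$ so $|\gamma(s) - \gamma(t)| < r$ throughout the neighbourhood; then $p \in I_{\gamma(s)}$ for all such $s$, and the constant selection $\delta \equiv p$ witnesses reachability along a sub-path of $\gamma$. For Case 2 at $t$, I would invoke Lemma~\ref{lemma_spheresaroundtypeline}. The last part of Lemma~\ref{lemma_largeinnerproducts} forces $\langle w, \gamma(t)\rangle = 0$; after rescaling $w$ inside $I_{\gamma(t)}$ (by convexity) so that $|w| < 1$, and flipping sign if needed so that $\langle w, \gamma'(t)\rangle > 0$, the first-order expansion
\begin{align*}
\langle \gamma(s), \gamma(t) + w\rangle &= 1 + (s-t)\langle \gamma'(t), w\rangle + O\bigl((s-t)^2\bigr)
\end{align*}
(using $\gamma'(t) \perp \gamma(t)$) places $\gamma(s)$ in the set $P$ of Lemma~\ref{lemma_spheresaroundtypeline} for $s > t$ close to $t$, and symmetrically via $\gamma(t) - w$ for $s < t$. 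Lemma~\ref{lemma_spheresaroundtypeline} then provides a path from $\gamma(t)$ to $\gamma(s)$ that is $2|\gamma(s) - \gamma(t)|$-close to $\{\gamma(t)\}$; shrinking $\eta_t$ to make this smaller than $\epsilon/4$ and concatenating two such paths through $\gamma(t)$ gives the local reachability from $\gamma(a)$ to $\gamma(b)$.

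Case 3, where $I_{\gamma(t)} = \{u_0\}$, is the principal obstacle. The key ingredient would be upper semicontinuity of $s \mapsto I_{\gamma(s)}$ at $t$: by a standard compactness-and-closedness argument in $K$, for any $\epsilon' > 0$ there is $\eta$ with $I_{\gamma(s)} \subseteq B_{\epsilon'}(u_0)$ whenever $|s-t| < \eta$, since any subsequential limit of points $u_n \in I_{\gamma(s_n)}$ with $s_n \to t$ must lie in $I_{\gamma(t)} = \{u_0\}$. After shrinking $\eta_t$ so this holds with $\epsilon' < \epsilon/4$, I would prove local reachability by covering $[a, b]$ with Case 1 and Case 2 neighbourhoods at points $s \neq t$ (each satisfying one of the three hypotheses), and gluing at $t$ using that upper semicontinuity forces any selection $\delta(s) \in I_{\gamma(s)}$ on $[a, t) \cup (t, b]$ to converge to $u_0 = \delta(t)$ as $s \to t$. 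The main technical hurdle is avoiding an infinite recursion when Case 3 sub-points appear inside $[a,b]$; the cleanest route I can see is to use the Chebyshev-centre selection $c(s) = c(I_{\gamma(s)})$ from Lemma~\ref{lemma_chebyshevproperties}, which is automatically continuous at $t$ by upper semicontinuity and which, combined with the locally-constructed continuous selections produced by Cases 1 and 2 at the other points, would yield the desired continuous selection on $[a, b]$.
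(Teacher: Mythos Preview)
Your treatment of Cases 1 and 2 is essentially the same as the paper's: you correctly use the ball argument from Lemma~\ref{lemma_nicetype} for Case 1, and for Case 2 you invoke Lemma~\ref{lemma_spheresaroundtypeline} via the first-order expansion of $\langle \gamma(s), \gamma(t)\pm w\rangle$ to place $\gamma(s)$ in $P$. These yield, for each $t\in T_1\cup T_2$, a $\mu_t>0$ such that every $\gamma(t')$ with $|t'-t|<\mu_t$ is reachable from $\gamma(t)$ along a path $\eta$-close to $\{\gamma(t)\}$. So far so good.

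The gap is in Case 3. Your plan is to establish local reachability on a neighbourhood $[a,b]$ of a point $t\in T_3$ and then run a finite-subcover argument. But the local problem at $t$ is not genuinely easier than the global one: the set $T_3\cap[a,b]$ can be complicated (Cantor-like, say), so $[a,b]\setminus T_3$ may split into infinitely many components, each requiring its own Case-1/2 construction, and these infinitely many pieces must then be glued continuously across $T_3\cap[a,b]$. Your suggestion to use the Chebyshev centre $c(s)=c(I_{\gamma(s)})$ does not resolve this. Upper semicontinuity gives continuity of $s\mapsto I_{\gamma(s)}$, and hence of $c$, \emph{only at points of $T_3$}; at nearby $T_1$ or $T_2$ points there is no reason for $s\mapsto I_{\gamma(s)}$ to be Hausdorff-continuous (indeed Theorem~\ref{theorem_continuouscounterexample} shows it can fail), so $c$ is not a continuous selection on $[a,b]$. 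And the Case-1/2 constructions you would splice in are not selections along $\gamma$ at all---they produce paths $\alpha$ merely $\epsilon$-close to $\gamma$---so they cannot simply be ``combined'' with $c$ by overwriting values. You are left with exactly the problem you tried to localise away: producing a single continuous pair $(\alpha,\beta)$ on $[a,b]$ that agrees with $\gamma$ and the unique point of $I_{\gamma(t)}$ at each $t\in T_3$, while on every complementary interval being built from the Case-1/2 machinery. The compactness reduction buys nothing here.

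The paper's proof confronts this directly. It shows $T_3$ is closed, writes $[0,1]\setminus T_3=\bigcup_{J\in\mathcal J}J$, and on each open interval $J$ builds paths $\alpha_m,\beta_m$ on an exhausting sequence of closed subintervals $J_m\nearrow J$, with the crucial quantitative condition that on $J_m\setminus J_{m-1}$ the path $\alpha_m$ stays within $\min\{\epsilon,\operatorname{length}(J)/m\}$ of (a nearby point on) $\gamma$. This forces $\alpha(s)\to\gamma(t)$ as $s\to t\in T_3$, which together with the closedness of $K$ and the singleton hypothesis on $I_{\gamma(t)}$ gives continuity of $\beta$ at $t$. The substance of the argument is precisely this recursive construction with controlled decay near $\partial J$; your proposal is missing an equivalent mechanism.
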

Note that in the second case we must have $\langle \gamma(t),w\rangle=0$ by the final part of Lemma~\ref{lemma_largeinnerproducts}.
\begin{proof}
	Let $T_i$ be the set of $t\in [0,1]$ belonging to the $i$th case above $(i=1,2,3)$. We first claim that $T_3$ is closed. Indeed, it is easy to see that $T_1$ is open, and if $t\in T_2$ then by Lemma~\ref{lemma_spheresaroundtypeline} there is some $\epsilon>0$ such that $((t-\epsilon,t+\epsilon)\setminus\{t\})\cap [0,1]\subseteq T_1$.
	
	Since $T_3$ is closed, $[0,1]\setminus T_3$ is a union of disjoint (open) intervals: $[0,1]\setminus T_3=\bigcup_{J\in \JJ}J$, where for all $J$ either $J=(a_J, a_J+b_J)$ (with $0\leq a_J<a_J+b_J\leq 1$), or $J=[0,b_J)$, or $J=(a_J,1]$, or $J=[0,1]$ (and $J\cap J'=\emptyset$ if $J\not =J'$). For each $J\in \JJ$ (and positive integer $m$), define $J_m$ as follows:
	\begin{itemize}
		\item If $J=(a_J, a_J+b_J)$, let $J_m=[a_J+\frac{1}{3m}b_J,a_J+(1-\frac{1}{3m})b_J]$;
		\item If $J=[0,b_J)$, let $J_m=[0,(1-\frac{1}{3m})b_J]$;
		\item If $J=(a_J,1]$, let $J_m=[a_J+\frac{1}{3m}(1-a_J),1]$;
		\item If $J=[0,1]$, let $J_m=[0,1]$.
	\end{itemize}
	Note that $\bigcup_{m\geq 1} J_m=J$ and $J_1\subseteq J_2\subseteq J_3\subseteq\dots$. Let us also write $J_0=\emptyset$ for all $J$.
	
	Observe that if $t\in T_1\cup T_2$, then for any $\eta>0$ there exists $\mu>0$ such that if $t'\in (t-\mu,t+\mu)\cap [0,1]$ 
	then $\gamma(t')$ is reachable from $\gamma(t)$ along a path which is $\eta$-close to $\{\gamma(t)\}$.
	Indeed, this is easy to see (and follows from Lemma~\ref{lemma_nicetype}) when $t\in T_1$, and follows from Lemma~\ref{lemma_spheresaroundtypeline} when $t\in T_2$.
	
	\textbf{Claim.} We can recursively construct $\alpha_m:\bigcup_{J\in \JJ}J_m\to \Sph^{d-1}$ and $\beta_m:\bigcup_{J\in \JJ}J_m\to \RR^d$ continuous functions such that
	\begin{enumerate}
		\item $\beta_m(t)\in I_{\alpha_m(t)}$ for all $t$ (when defined);
		\item If $m'>m$ then $\alpha_{m'}$ and $\beta_{m'}$ extend $\alpha_{m}$ and $\beta_m$, respectively;
		\item For all $J\in\JJ$ and $m>0$ we have $\alpha_m(\min J_m)=\gamma(\min J_m)$ and $\alpha_m(\max J_m)=\gamma(\max J_m)$;
		\item If $t\in J_{m}\setminus J_{m-1}$, then there is some $t'\in J_{m}$ such that  
		$|t-t'|< \operatorname{length}(J)/m$ and $|\alpha_{m}(t)-\gamma(t')|<\min\{\epsilon,\operatorname{length}(J)/m\}$.\label{item_close}
	\end{enumerate}

\textbf{Proof of Claim. }It is enough to show that whenever $a<b$, $[a,b]\subseteq J$, $u\in I_{\gamma(a)}, v\in I_{\gamma(b)}$ and $\eta>0$, then there exist $f:[a,b]\to \Sph^{d-1}$ and $g:[a,b]\to K$ continuous functions such that $g(t)\in I_{f(t)}$ for all $t$, $g(a)=u, g(b)=v, f(a)=\gamma(a),f(b)=\gamma(b)$, and for all $t$ there is some $t'\in [a,b]$ with $|t-t'|<\eta$ such that $|f(t)-\gamma(t')|\leq\eta$. For each $t$, pick $\mu_t$ as in the observation above, we may assume $\mu_t<\eta$ for all $t$. Using the compactness of $[a,b]$, if $N$ is large enough and we write $x(j)=a+j(b-a)/N$, then for all $j$ we have $x(j)-x(j-1)<\eta$ and there is some $t_j$ such that $|x(j)-t_j|,|x(j-1)-t_j|<\mu_{t_j}$. But then $\gamma(x(j)),\gamma(x(j+1))$ are both reachable from $\gamma(t_j)$ along a path which is $\eta$-close to $\{\gamma(t_j)\}$, and hence $x(j+1)$ is reachable from $x(j)$ along a path which is $\eta$-close to $\{\gamma(t_j)\}$. It follows that for any choice of $u_j\in I_{\gamma(x_j)}$ ($j=0,\dots,N$) there exist $f_j:[x(j),x(j+1)]\to \Sph^{d-1}$ and $g_j:[x(j),x(j+1)]\to K$ such that $g_j(t)\in I_{f_j(t)}$ for all $t$, $g_j(x(j))=u_j$, $g_j(x(j+1))=u_{j+1}$, $f_j(x(j))=\gamma(x(j))$, $f_j(x(j+1))=\gamma(x(j+1))$, and for all $t$ we have $|f_j(t)-\gamma(t_j)|\leq\eta$. Picking $u_0=u$ and $u_N=v$ and then putting together these $f_j, g_j$ gives the required functions $f,g$ and finishes the proof of the claim.
\qed\medskip

	Define $\alpha:[0,1]\to \Sph^{d-1}$ and $\beta:[0,1]\to \RR^d$ by setting $\alpha(t)$ to be $\alpha_m(t)$ and $\beta(t)$ to be $\beta_m(t)$ when $t\in T_1\cup T_2$ (and $m$ is large enough so that this exist), and when $t\in T_3$ then setting $\alpha(t)=\gamma(t)$ and $\beta(t)$ to be the unique point in $I_{\gamma(t)}$. It is clear that $\alpha(0)=\gamma(0)$, $\alpha(1)=\gamma(1)$, $\beta(t)\in I_{\alpha(t)}$ for all $t$, and $\alpha, \beta$ are continuous at all points in $T_1\cup T_2$. Also, $\alpha$ is $\epsilon$-close to $\gamma$. We show that $\alpha, \beta$ are continuous at all points in $T_3$ as well.
	
	We first prove that if $t\in T_3$ then $\alpha$ is continuous at $t$. Take any sequence $(t_n)\to t$ in $[0,1]$, we show $(\alpha(t_n))\to \alpha(t)=\gamma(t)$. If this is not true, then we can take a subsequence of $(\alpha(t_n))$ that converges to some $p\in \Sph^{d-1}$, $p\not =\gamma(t)$, so we may assume that $(\alpha(t_n))$ is convergent. Also, we may assume that $t_n\in T_1\cup T_2$ for all $n$ (since $\gamma$ is continuous, and $\alpha(t')=\gamma(t')$ if $t'\in T_3$). We may also assume that $(t_n)$ is either decreasing or increasing. Let $J(n)\in \JJ$ be such that $t_n\in J(n)$, and let $m(n)$ be the positive integer such that $t_n\in J(n)_{m(n)}\setminus J(n)_{m(n)-1}$. Furthermore, let $t'_n$ be as given by point \ref{item_close} above for $t_n\in J_{m(n)}\setminus J_{m(n)-1}$. Since $(t_n)$ is either increasing or decreasing, either $J(n)$ is eventually constant and $m(n)\to \infty$, or $J(n)$ takes infinitely many different values and $\operatorname{length}(J(n))\to 0$. In either case, we have $\operatorname{length}(J(n))/m(n)\to 0$. Hence $\alpha(t_n)-\gamma(t'_n)\to 0$ and $t_n-t_n'\to 0$. But then $t_n'\to t$ and hence $\gamma(t_n')\to \gamma(t)$, which implies $\alpha(t_n)\to \gamma(t)$, as claimed.
	
	We now show that $\beta$ is also continuous at all $t\in T_3$. Assume that $(t_n)$ is a sequence in $[0,1]$ converging to $t\in T_3$, we show $\beta(t_n)\to \beta(t)$. As before, by taking a subsequence we may assume that $\beta(t_{n})$ converges to some $p\in K$. But $\beta(t_{n})\in I_{\alpha(t_{n})}$ for all $n$, i.e., $\beta(t_{n}), \beta(t_{n})+\alpha(t_n)\in K$ for all $n$. Since $K$ is closed and $\alpha$ is continuous, by taking limits we get $p, p+\alpha(t)\in K$, i.e., $p\in I_{\alpha(t)}=I_{\gamma(t)}$. But $I_{\gamma(t)}=\{\beta(t)\}$, hence $p=\beta(t)$, as claimed.
\end{proof}

We will attempt to find a `good' path, i.e., one satisfying the conditions of Lemma~\ref{lemma_pathsavoidingbadcase}. Note that the only case we need to avoid is having a point $v$ on $\gamma$ such that $I_v$ has empty interior, is not a single point, and the tangent to $\gamma$ at $v$ is perpendicular to any $u-u'$ ($u,u'\in I_v$). To find such paths, it will be easier to work in $\RR^{d-1}$ instead of on $\Sph^{d-1}$, using that locally they have the same structure. The next lemma captures the key property coming from Lemma~\ref{lemma_spheresaroundtypeline} in terms of parametrizations.

While the formal statement is rather complicated, the lemma is intuitively quite simple, as we now explain. Let us focus on the case $d=3$. Using Figure~\ref{figure_Kakeyaspheres}, we know that if $\gamma$ is a path such that $\gamma(t)$ is a `bad point', i.e., the conditions of Lemma~\ref{lemma_pathsavoidingbadcase} are not satisfied there, then we get the two blue circles touching at $v=\gamma(t)$ such that no point in the regions enclosed by the circles can be a bad point for any path. Moreover, we also know that $\gamma$ must have tangent in the direction of the yellow dotted line at $t$. Our next lemma essentially states that if we take charts then we still get the blue circles whose interiors cannot contain bad points.

\begin{lemma}\label{lemma_takingchartsnew}
	Let $\varphi: \RR^{d-1}\to V$ be a smooth parametrization of some open set $V\subseteq \Sph^{d-1}$, and let $X\subseteq \RR^{d-2}$ be an open neighbourhood of $0$. Write $\gamma_x(t)=(t,x)$ for $x\in X, t\in [0,1]$ (so $\gamma_x:[0,1]\to \RR^{d-1}$). Let $Z$ be the set of all $(t,x)\in\RR^{d-1}$ ($t\in[0,1],x\in X$) such that for $v=(\varphi\circ\gamma_x)(t)=\varphi(t,x)$ the set $I_v$ has empty interior, but there exist $u,w\in \RR^d$ such that $u,u+w\in I_v$, $w\not =0$, $\langle v,w\rangle=0$ and $\langle w, (\varphi\circ\gamma_x)'(t)\rangle =0$. Let $X_Z=\{x\in X: (t,x)\in Z\textnormal{ for some }t\in [0,1] \}$, and assume that $x\in X_Z$ and $t_x\in [0,1]$ are such that $(t_x,x)\in Z$. Then there is some $w_x\in\RR^{d-2}$, $w_x\not =0$ such that the open ball of radius $|w_x|$ centred at $(t_x,x+w_x)$ is disjoint from $Z$.
\end{lemma}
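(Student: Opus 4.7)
The plan is to use Lemma~\ref{lemma_spheresaroundtypeline} to pick out a region in the chart near $(t_x, x)$ whose image under $\varphi$ lies in the ``good'' set $P = \{p : \langle p, v+w\rangle > 1\} \cup \{p : \langle p, v-w\rangle > 1\}$, where $v := \varphi(t_x, x)$ and $u, w$ are the witnesses of $(t_x, x)\in Z$. That lemma guarantees $I_p$ has non-empty interior for every $p \in P$, so the $\varphi$-preimage of $P$ is automatically disjoint from $Z$. Hence it suffices to locate $w_x \neq 0$ in $\RR^{d-2}$ such that the open ball $B_{|w_x|}((t_x, x + w_x))$ is sent by $\varphi$ into the single cap $\{p : \langle p, v+w\rangle > 1\}$.

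The first step is a Taylor expansion at $(t_x, x)$. Writing $(s, y) = (t' - t_x, x' - x)$ and using that $\varphi$ takes values in $\Sph^{d-1}$ (so all partial derivatives lie in $v^\perp$), one obtains
\[
\langle \varphi(t_x+s, x+y), v+w\rangle = 1 + s\langle \partial_t\varphi(t_x, x), w\rangle + \langle y, b\rangle + E(s, y),
\]
where $b \in \RR^{d-2}$ has components $b_i := \langle \partial_{y_i}\varphi(t_x, x), w\rangle$ and $|E(s, y)| \leq C|(s, y)|^2$ near the origin, with $C$ depending only on $\varphi$. Since $(\varphi \circ \gamma_x)'(t_x) = \partial_t\varphi(t_x, x)$, the defining condition $(t_x, x)\in Z$ forces the $s$-coefficient to vanish. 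The key observation is that $b \neq 0$: otherwise $w$ would be orthogonal to every partial derivative of $\varphi$ at $(t_x, x)$, hence to the image of the invertible map $D\varphi(t_x, x) \colon \RR^{d-1}\to T_v\Sph^{d-1}$; combined with $\langle v, w\rangle = 0$, this forces $w = 0$, contradicting our hypothesis.

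The natural choice is then $w_x := \epsilon b$ for $\epsilon > 0$ to be fixed small. Any point $(t_x + s, x + y)$ in the open ball $B_{\epsilon|b|}((t_x, x+\epsilon b))$ satisfies $|y-\epsilon b|^2 + s^2 < \epsilon^2|b|^2$, which rearranges to the pleasant inequality $\langle y, b\rangle > |(s, y)|^2/(2\epsilon)$. Combining with the error bound gives
\[
\langle y, b\rangle + E(s, y) > |(s, y)|^2\Bigl(\tfrac{1}{2\epsilon} - C\Bigr),
\]
which is strictly positive as soon as $\epsilon < 1/(2C)$, noting that $(s, y) = (0, 0)$ sits on the boundary of the ball rather than its interior. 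Every point of the open ball therefore maps into $\{p : \langle p, v+w\rangle > 1\}\subseteq P$, and the conclusion follows. The only part needing care is ensuring the quadratic remainder $E$ does not overwhelm the linear push; this is routine because $\langle y, b\rangle$ is of order $\epsilon$ on this ball while $E$ is of order $\epsilon^2$, so shrinking $\epsilon$ handles it.
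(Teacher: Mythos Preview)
Your argument is correct and follows essentially the same route as the paper's proof. Both proofs Taylor-expand $\langle \varphi(\cdot),v+w\rangle$ at $(t_x,x)$, observe that the $t$-component of the linear term vanishes by the defining condition of $Z$ while the $x$-component does not (since $D\varphi(t_x,x)$ is an isomorphism onto $v^\perp\ni w$), and then place an off-centre ball in the direction of that nonvanishing gradient so that the linear gain $\langle y,b\rangle\gtrsim |(s,y)|^2/\epsilon$ dominates the quadratic remainder. The only cosmetic difference is that the paper first passes to an orthonormal basis $f_1,\dots,f_{d-1}$ with $f_1=e_1$ and $\langle D(f_i),w\rangle=0$ for $i\neq 2$, then moves along $f_2$, whereas you work directly with the gradient vector $b=(\langle\partial_{y_i}\varphi,w\rangle)_i$ and set $w_x=\epsilon b$; your $b/|b|$ is precisely the paper's $f_2$ (projected to $\RR^{d-2}$), so the two computations are the same in different coordinates.
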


The following lemma tells us that the conclusion of Lemma~\ref{lemma_takingchartsnew} guarantees that there are `few' points we need to avoid.

\begin{lemma}\label{lemma_Baire}
	Let $Z\subseteq \RR^{d-1}$ and let $X$ be an open neighbourhood of $0$ in $\RR^{d-2}$. Let $X_Z=\{x\in X: (t,x)\in Z\textnormal{ for some }t\in [0,1] \}$, and for each $x\in X_Z$ let $t_x\in [0,1]$ be arbitrary such that $(t_x, x)\in Z$. Assume that for each $x\in X_Z$ there is some $w_x\in\RR^{d-2}$, $w_x\not =0$ such that the open ball of radius $|w_x|$ centred at $(t_x,x+w_x)$ is disjoint from $Z$. Then $X_Z\not =X$.
\end{lemma}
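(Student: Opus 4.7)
The plan is to assume for contradiction that $X_Z = X$ and iteratively apply Baire's theorem to derive a contradiction. Since $X$ is a Baire space and the hypothesis gives every $x \in X = X_Z$ a pair $(t_x, w_x)$ with $w_x \neq 0$, the set $X$ is covered by the countable collection of sets $A_{q, v, n} := \{x : |t_x - q| < 1/n,\, |w_x - v| < 1/n\}$ indexed by $q \in \mathbb{Q}\cap[0,1]$, $v \in \mathbb{Q}^{d-2}$, $n \in \NN$ with $|v| \geq 2/n$. By Baire, some $A_1 := A_{q_1, v_1, n_1}$ is dense in a nonempty open ball $U_1 \subseteq X$, and by restricting to a sub-ball if necessary I may assume $U_1' := U_1 \cap (U_1 + v_1)$ is nonempty.

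For each $y \in U_1'$ I pick a sequence $(x_m)$ in $A_1$ with $x_m \to y - v_1$. The balls $B_{x_m}$ are open and disjoint from $Z$, with centers converging to $(q_1, y)$ and radii to $|v_1|$. For any point in the open ball $B((q_1, y), r_1)$ with $r_1 := |v_1| - 3/n_1 > 0$, membership in $B_{x_m}$ holds eventually, forcing it to lie outside $Z$. Hence $B((q_1, y), r_1) \cap Z = \emptyset$; in particular the slab $(q_1 - r_1, q_1 + r_1) \times U_1'$ avoids $Z$, so every $y \in U_1'$ has $t_y \in C_1 := [0,1] \setminus (q_1 - r_1, q_1 + r_1)$.

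I then iterate on $U_1'$, restricting the Baire partition to $q \in \mathbb{Q} \cap C_1$: this yields nested open sets $U_k' \subseteq U_{k-1}'$ and exclusion intervals $I_k = (q_k - r_k, q_k + r_k)$ with $q_k \in C_{k-1}$, $r_k > 0$, and $t_y \in C_k = C_{k-1} \setminus I_k$ for every $y \in U_k'$. Since $[0,1]$ is compact and $C_k$ is a decreasing sequence of nonempty closed sets, it suffices to show $\bigcap_k C_k = \emptyset$; then some finite $C_k$ must be empty, contradicting the existence of a valid $t_y \in C_k$ for $y \in U_k' \neq \emptyset$.

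The main obstacle will be the termination step, i.e., arranging that the intervals $\{I_k\}$ eventually cover $[0,1]$. My plan here is to refine the Baire partition at each step by sub-dividing $C_{k-1}$ into rational sub-intervals and forcing $q_k$ into a prescribed one: since $C_{k-1}$ is covered by countably many such sub-intervals $J$, and for each $J$ the set $\{y \in U_{k-1}' : t_y \in J\}$ is either nowhere dense or contains a Baire-dense subset of some open set, one can iteratively cover all components of $C_{k-1}$ by exclusion intervals; compactness of $[0,1]$ then forces finitely many iterations to suffice. Coordinating this systematic coverage of $[0,1]$ with the nested shrinkage of the open balls $U_k'$ — in particular keeping each $U_k'$ nonempty — is the delicate part of the argument.
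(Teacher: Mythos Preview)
Your approach diverges from the paper's after the initial Baire reduction, and the divergence leaves a genuine gap in the termination step that I do not see how to close as written.

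The paper also begins with a single Baire step: it writes $X_Z=\bigcup_n\{x:|w_x|\ge 1/n\}$, refines by the direction $w_x/|w_x|$, and finds a ball in which one piece is dense. But it does \emph{not} iterate. Instead it extracts a quantitative bound from the ball hypothesis: if $y=x+\alpha w_x$ with $0<\alpha<1$, then $(t_y,y)\notin B((t_x,x+w_x),|w_x|)$ gives
\[
|t_y-t_x|^2\ \ge\ |w_x|^2-|y-x-w_x|^2\ =\ |w_x|^2\,\alpha(2-\alpha),
\]
hence $|t_y-t_x|\gtrsim\sqrt{|w_x|\,|y-x|}$. Placing $N+1$ points in a row along the common direction, spaced $\epsilon/N$ apart, forces all pairwise $t$-gaps to be $\Omega(1/\sqrt N)$, so the range of the $t$-values is $\Omega(\sqrt N)\to\infty$, contradicting $t\in[0,1]$. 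The square-root is doing all the work.

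Your iteration moves by the full vector $v_k$ at step $k$ and records only the \emph{linear} consequence: an excluded interval $I_k$ of length about $2|v_k|$. But $|v_k|$ is whatever Baire happens to hand you, and nothing prevents $\sum_k|v_k|<1$, in which case the $I_k$ never cover $[0,1]$. Your device of forcing $q_k$ into a prescribed sub-interval $J$ does not help, because the exclusion radius $r_k\approx|v_k|$ is completely unrelated to the size or location of $J$. The final appeal to compactness is circular: compactness extracts a finite subcover \emph{once you already have a cover}, which is exactly what has not been shown.

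Two smaller points. First, with the constraint $|v|\ge 2/n$ your radius $r_1=|v_1|-3/n_1$ can be negative; you need something like $|v|\ge 4/n$. Second, the claim that one may shrink $U_1$ so that $U_1\cap(U_1+v_1)\neq\emptyset$ goes the wrong way --- shrinking $U_1$ makes the intersection emptier, and nothing rules out $|v_1|\ge\operatorname{diam}(U_1)$. These are patchable, but the termination issue is structural: without exploiting the square-root geometry of the excluded balls, iterated Baire does not force a contradiction.
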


Before we prove Lemma~\ref{lemma_takingchartsnew} and Lemma~\ref{lemma_Baire}, let us first put them together to obtain the lemmas we will use later.

\begin{lemma}\label{lemma_takingcharts}
	Let $\varphi: \RR^{d-1}\to V$ be a smooth parametrization of some open set $V\subseteq \Sph^{d-1}$, and let $X\subseteq \RR^{d-2}$ be an open neighbourhood of $0$. Write $\gamma_x(t)=(t,x)$ for $x\in X, t\in [0,1]$ (so $\gamma_x:[0,1]\to \RR^{d-1}$). Then there exists some $x\in X$ such that for all $\epsilon>0$, $\varphi(\gamma_x(1))$ is reachable from $\varphi(\gamma_x(0))$ along a path which is $\epsilon$-close to $\varphi\circ\gamma_x$.
\end{lemma}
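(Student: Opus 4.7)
The plan is to chain together Lemma~\ref{lemma_takingchartsnew}, Lemma~\ref{lemma_Baire}, and Lemma~\ref{lemma_pathsavoidingbadcase} in the obvious way. The set $Z \subseteq \RR^{d-1}$ introduced in Lemma~\ref{lemma_takingchartsnew} has been designed precisely to capture the parameters $(t,x)$ at which the path $\varphi \circ \gamma_x$ violates all three alternatives in the hypothesis of Lemma~\ref{lemma_pathsavoidingbadcase}. So the argument reduces to finding an $x \in X$ with $(t,x) \notin Z$ for all $t \in [0,1]$, i.e.\ some $x \in X \setminus X_Z$.

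The first step is to verify this equivalence explicitly. Fix $x \in X$ and $t \in [0,1]$, and write $v = \varphi(t,x)$. If $I_v$ has non-empty interior then alternative (1) of Lemma~\ref{lemma_pathsavoidingbadcase} holds. Otherwise $I_v$ has empty interior, and by the final part of Lemma~\ref{lemma_largeinnerproducts} every $w$ with $u, u+w \in I_v$ automatically satisfies $\langle v, w \rangle = 0$. Hence $(t,x) \in Z$ if and only if $I_v$ has more than one point and there exists such a $w \neq 0$ with $\langle w, (\varphi \circ \gamma_x)'(t) \rangle = 0$; equivalently, $(t,x) \notin Z$ if and only if either $I_v$ is a single point (alternative (3)) or every such $w$ satisfies $\langle w, (\varphi \circ \gamma_x)'(t) \rangle \neq 0$ (alternative (2)). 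The second step is then to apply Lemma~\ref{lemma_takingchartsnew}: for every $x \in X_Z$ and every choice of $t_x \in [0,1]$ with $(t_x,x) \in Z$, it provides some $w_x \neq 0$ such that the open ball of radius $|w_x|$ centred at $(t_x, x+w_x)$ is disjoint from $Z$. This is exactly the hypothesis of Lemma~\ref{lemma_Baire}, whose conclusion gives $X_Z \neq X$. Pick any $x \in X \setminus X_Z$; by the equivalence above, the smooth (hence continuously differentiable) path $\varphi \circ \gamma_x : [0,1] \to \Sph^{d-1}$ satisfies the hypotheses of Lemma~\ref{lemma_pathsavoidingbadcase}, which yields the desired reachability for any $\epsilon > 0$.

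Since all the substantive geometry has been offloaded to the preceding lemmas, there is no real obstacle here: the only step requiring care is the translation in the first paragraph between the defining condition of $Z$ and the three alternatives of Lemma~\ref{lemma_pathsavoidingbadcase}, and this is immediate from Lemma~\ref{lemma_largeinnerproducts}. One minor bookkeeping point worth noting is that $(\varphi \circ \gamma_x)'(t)$ is exactly the partial derivative of $\varphi$ in the first coordinate direction, which is well-defined and continuous in $(t,x)$ because $\varphi$ is smooth, so both the definition of $Z$ and the applicability of Lemma~\ref{lemma_pathsavoidingbadcase} are unambiguous.
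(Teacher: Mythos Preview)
Your proposal is correct and follows essentially the same approach as the paper: define $Z$ as in Lemma~\ref{lemma_takingchartsnew}, apply that lemma to verify the hypothesis of Lemma~\ref{lemma_Baire}, obtain $x\in X\setminus X_Z$, and then invoke Lemma~\ref{lemma_pathsavoidingbadcase} (using Lemma~\ref{lemma_largeinnerproducts} to translate between the definition of $Z$ and the trichotomy there). Your write-up is more explicit than the paper's about the equivalence between ``$(t,x)\notin Z$'' and the three alternatives of Lemma~\ref{lemma_pathsavoidingbadcase}, but the argument is the same.
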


\begin{proof}
	Define $Z$, $X_Z$ and $t_x$ (for $x\in X_Z$) as in Lemma~\ref{lemma_takingchartsnew}. By Lemma~\ref{lemma_takingchartsnew}, for each $x\in X_Z$ there is some $w_x\in\RR^{d-2}$, $w_x\not =0$ such that the open ball of radius $|w_x|$ centred at $(t_x,x+w_x)$ is disjoint from $Z$. So we can apply Lemma~\ref{lemma_Baire} to find some $x\in X$ such that $x\not\in X_Z$. Then (using the final part of Lemma~\ref{lemma_largeinnerproducts}) we get that Lemma~\ref{lemma_pathsavoidingbadcase} applies for the path $\varphi\circ\gamma_x$ and hence $\varphi(\gamma_x(1))$ is reachable from $\varphi(\gamma_x(0))$ along a path which is $\epsilon$-close to $\varphi\circ\gamma_x$.
\end{proof}

For two points $x$ and $y$ in $\RR^{d-1}$, let  $\gamma_{x,y}$ denote the straight line segment from $x$ to $y$ (i.e., $\gamma_{x,y}(t)=(1-t)x+ty$ for $t\in [0,1]$). The following lemma is a more convenient version of Lemma~\ref{lemma_takingcharts}.

\begin{lemma}\label{lemma_chartsconvenient}
	Let $\varphi: \RR^{d-1}\to V$ be a smooth parametrization of some open set $V\subseteq \Sph^{d-1}$. Let $U_1, U_2$ be non-empty open subsets of $\RR^{d-1}$. Then there are some $x\in U_1, y\in U_2$ such that, for all $\epsilon>0$, $\varphi(y)$ is reachable from $\varphi(x)$ along a path which is $\epsilon$-close to $\varphi\circ \gamma_{x,y}$.
\end{lemma}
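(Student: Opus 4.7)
The plan is to deduce Lemma~\ref{lemma_chartsconvenient} from Lemma~\ref{lemma_takingcharts} by an affine change of coordinates that converts straight-line segments between nearby points of $U_1$ and $U_2$ into curves of the restricted form $\gamma_x(t)=(t,x)$ handled by the earlier lemma. The key flexibility in Lemma~\ref{lemma_takingcharts} is that the existential $x$ may be chosen anywhere in a prescribed open neighbourhood of $0$ in $\RR^{d-2}$; I will arrange for every admissible $x$ there to yield an endpoint pair lying in $U_1\times U_2$.

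First I would choose arbitrary distinct points $x_0\in U_1$ and $y_0\in U_2$ (possible as $U_1,U_2$ are non-empty open subsets of $\RR^{d-1}$ with $d\geq 3$), set $v=y_0-x_0\neq 0$, complete $v$ to a basis $\{v,e_1,\dots,e_{d-2}\}$ of $\RR^{d-1}$, and let $M:\RR^{d-2}\to\RR^{d-1}$ be the linear map sending the standard basis vectors of $\RR^{d-2}$ to $e_1,\dots,e_{d-2}$. Then the affine map $\psi:\RR^{d-1}\to\RR^{d-1}$ given by $\psi(t,x)=x_0+tv+Mx$ is a diffeomorphism, so $\tilde\varphi:=\varphi\circ\psi$ is still a smooth parametrization of the same open set $V\subseteq\Sph^{d-1}$, and Lemma~\ref{lemma_takingcharts} applies to $\tilde\varphi$.

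Next, using openness of $U_1$ and $U_2$ around $x_0$ and $y_0$, I would pick an open neighbourhood $X\subseteq\RR^{d-2}$ of $0$ small enough that $x_0+Mx\in U_1$ and $y_0+Mx\in U_2$ for every $x\in X$. Applying Lemma~\ref{lemma_takingcharts} to $\tilde\varphi$ and $X$ produces some $x\in X$ for which, for every $\epsilon>0$, $\tilde\varphi(\gamma_x(1))$ is reachable from $\tilde\varphi(\gamma_x(0))$ along a path $\epsilon$-close to $\tilde\varphi\circ\gamma_x$. Setting $x'=x_0+Mx\in U_1$ and $y'=y_0+Mx\in U_2$, the direct computation
\[
\psi(\gamma_x(t))=x_0+tv+Mx=(1-t)(x_0+Mx)+t(y_0+Mx)=(1-t)x'+ty'=\gamma_{x',y'}(t)
\]
yields $\tilde\varphi\circ\gamma_x=\varphi\circ\gamma_{x',y'}$, which is exactly the desired reachability statement.

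I do not expect any genuine obstacle here: this lemma is essentially a convenient repackaging of Lemma~\ref{lemma_takingcharts}. The only point to be careful about is shrinking $X$ enough that both endpoints $x_0+Mx$ and $y_0+Mx$ land in the prescribed open sets, and no substantive new geometric or analytical input is needed beyond the affine reparametrization that turns straight segments between points near $x_0$ and $y_0$ into the axis-aligned paths $\gamma_x$.
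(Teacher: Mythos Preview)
Your proposal is correct and follows essentially the same approach as the paper: both reduce to Lemma~\ref{lemma_takingcharts} via a bijective affine change of coordinates sending a chosen pair $x_0\in U_1$, $y_0\in U_2$ to $(0,0)$ and $(1,0)$, so that the axis-aligned paths $\gamma_x$ correspond to straight segments $\gamma_{x',y'}$ between nearby points of $U_1$ and $U_2$. The paper's proof is a one-line sketch (``take a bijective affine map $\psi$ mapping $U_1$ to a neighbourhood of $0$ and $U_2$ to a neighbourhood of $(1,0,\dots,0)$''), whereas you have spelled out the map explicitly and verified the endpoint and segment computations; the content is the same.
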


\begin{proof}
	We can take a bijective affine map $\psi: \RR^{d-1}\to\RR^{d-1}$ which maps $U_1$ to an open neighbourhood of $0$ and $U_2$ to an open neighbourhood of $(1,0,\dots,0)$. Then the statement follows easily from Lemma~\ref{lemma_takingcharts} applied to the parametrization $\varphi\circ \psi^{-1}$.
\end{proof}

We finish this subsection by giving the proofs of Lemmas~\ref{lemma_takingchartsnew} and \ref{lemma_Baire}.

\begin{proof}[Proof of Lemma~\ref{lemma_takingchartsnew}]
	Let $v=\varphi(t_x,x)$. By the definition of $Z$, we may find $u,w\in \RR^d$ such that $0<|w|<1$, $u,u+w\in I_v$, $\langle v,w\rangle=0$ and $\langle w,(\varphi\circ\gamma_x)'(t)\rangle =0$. By Lemma~\ref{lemma_spheresaroundtypeline}, the set $P=\{p\in \Sph^{d-1}: \langle p, v+w \rangle > 1 \}\cup \{p\in \Sph^{d-1}: \langle p, v-w \rangle > 1 \}$ has the property that for each $p\in P$, $I_p$ has non-empty interior. In particular, $\varphi(Z)$ is disjoint from $P$.
	
	By decreasing $|w|$ if necessary, we may assume that $P\subseteq V$. We want to show that for some $w'\in\RR^{d-2}$ ($w'\not =0$) the set $\varphi^{-1}(P)$ contains an open ball of radius $|w'|$ around $(t_x,x+w')$.
	
	Let $D$ be the derivative $D\varphi|_{(t_x,x)}$ of $\varphi$ at $(t_x,x)$, so $D$ is a bijective linear map $\RR^{d-1}\to\{v'\in\RR^d:\langle v,v'\rangle=0\}$. We can find an orthonormal basis $f_1,\dots,f_{d-2}$ of $\RR^{d-2}$ such that $f_1=(1,0,\dots,0)$, $\langle D(f_2), w\rangle>0$ and $\langle D(f_i), w\rangle=0$ for all $i\not =2$. Consider the ball of radius $\rho$ centred at $(t_x,x)+\rho f_2$. Any point of this open ball is of the form $q=(t_x,x)+\sum_{i=1}^{d-2}\lambda_if_i$ with $(\lambda_2-\rho)^2+\sum_{i\not =2}\lambda_i^2<\rho^2$. But we have
	\begin{align*}
	\varphi(q)&=v+\sum_{i=1}^{d-2}\lambda_i D(f_i)+O(\sum_{i=1}^{d-2}\lambda_i^2)
	\end{align*}
	and hence
	$$	\varphi(q)=v+\sum_{i=1}^{d-2}\lambda_i D(f_i)+O(2\rho \lambda_2).$$
	Using that $\langle v,D(f_i)\rangle=0$ for all $i$ and $\langle w,D(f_j)\rangle =0$ for all $j \not =2$, 
	
	\begin{align*}
	\langle v+w,\varphi(q)\rangle&=\langle v+w, v+\sum_{i=1}^{d-2}\lambda_i D(f_i)+O(2\rho \lambda_2)\rangle\\
	&=1+\lambda_2 \langle w,D(f_2)\rangle+O(2\rho\lambda_2).
	\end{align*}
	
	Since $\langle w, D(f_2)\rangle>0$, we get that there is some $\rho_0>0$ such that if $\rho\leq \rho_0$ then $\langle v+w,\varphi(q)\rangle>1$ (and hence $q\in\varphi^{-1}(P)$ and thus $q\not \in Z$) for all such points $q$. Since $f_1$ is orthogonal to $f_2$, we have $f_2=(0,y)$ for some $y\in \RR^{d-2}$, $|y| =1$. Then $w_x=\rho_0y$ satisfies the conditions.
\end{proof}

Before we formally prove Lemma~\ref{lemma_Baire}, let us give a sketch proof in the case when $d=3$, $X=(-1,1)$ and $w_x\in\RR$ is the same for all $x$: $w_x=r\in (0,1)$ for all $x\in X$. Assume that $X_Z=X$. Using that the circle of radius $r$ centred at $(x,t_x+r)$ does not contain $(y,t_y)$, it is easy to see that we must have $|t_y-t_x|\geq \Omega_r(\sqrt{y-x})$ whenever $0<y-x<r$ (see Figure~\ref{figure_Baire}). So if we take $N+1$ equally spaced points $x_0,\dots,x_N$ between $0$ and $r$ ($x_j=jr/N$), then $|t_{x_i}-t_{x_j}|\geq \Omega_r(1/\sqrt{N})$ for all $i,j$. It is easy to see that this gives a contradiction as $N\to\infty$. We will use the Baire category theorem to reduce the general case to a case similar enough to the one discussed above.
\begin{figure}[h!]
	\includegraphics[clip,trim=0cm 0cm 0cm 0cm, width=0.5\linewidth]{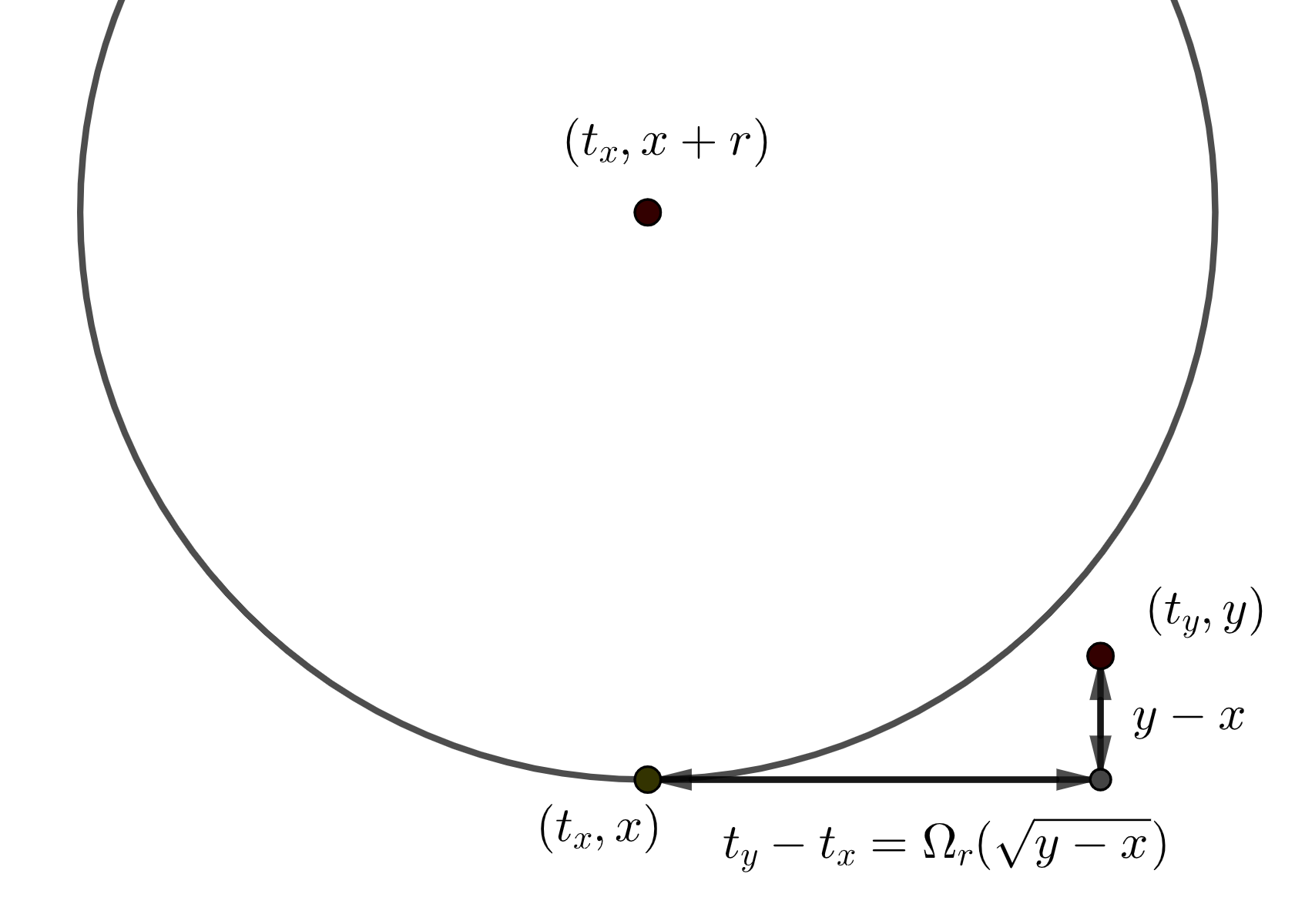}
	\centering
	\captionsetup{justification=centering}
	\caption{Since $(t_y,y)$ is not contained in the ball of radius $r$ centred at $(t_x,x+r)$, we have $|t_y-t_x|=\Omega_r(\sqrt{y-x})$.}
	\label{figure_Baire}
\end{figure}

\begin{proof}[Proof of Lemma~\ref{lemma_Baire}]
	For each positive integer $n$, let $X_Z^{n}=\{x\in X_Z: |w_x|\geq 1/n\}$. Clearly, $X_Z=\bigcup_n X_Z^n$. By the Baire category theorem, it is enough to show that each $X_Z^n$ is a finite union of nowhere dense sets. Assume, for contradiction, that $X_Z^n$ cannot be written as such a finite union. Let $\eta=1/4$, and for all $v\in \Sph^{d-3}$ let $U_v=\{u\in \Sph^{d-3}: \langle u,v\rangle>1-\eta\}$. Since $\Sph^{d-3}$ is compact, it is covered by finitely many such sets $U_v$. Write $Y_v=\{x\in X_Z^n: w_x/|w_x|\in U_v\}$. It follows that not every $Y_v$ is nowhere dense, i.e., there exist $v\in \Sph^{d-3}$, $y\in X$ and $\epsilon>0$ such that the closure of $Y_v$ contains all $x\in X$ with $|x-y|\leq \epsilon$. We may assume $\epsilon<1/n$. 
	Write $x(j)=y+\frac{j}{N}\epsilon v$ for $j=0,\dots, N$, where $N$ is some large positive integer (specified later). Note that $|x(j)-y|\leq \epsilon$ for all $j$, so there are some $y(j)\in Y_v$ such that $|x(j)-y(j)|<\eta/N^2$.

	\textbf{Claim.} If $0\leq i<j\leq N$ then $|t_{y(i)}-t_{y(j)}|=\Omega_{n,\epsilon}(1/N^{1/2})$.
	
	Note that if the claim holds, then $\max_i t_{y(i)}-\min_i t_{y(i)}=\Omega_{n,\epsilon}(N^{1/2})$. Then taking $N$ large enough gives a contradiction. So the lemma follows from the claim above.
	
	\textbf{Proof of Claim.} We will use that the open ball centred at $(t_{y(i)},y(i)+w_{y(i)})$ of radius $|w_{y(i)}|$ does not contain $(t_{y(j)},y(j))$. For simplicity, let us write $t_i$ for $t_{y(i)}$, $t_j$ for $t_{y(j)}$ and $w$ for $w_{y(i)}$. We may assume $|w|=1/n$. We have
	\begin{align*}
	|(t_i,y(i)+w)-(t_j,y(j))|^2&=|t_i-t_j|^2+|y(i)+w-y(j)|^2.
	\end{align*}
	But
	\begin{align*}
	|y(i)+w-y(j)|^2&=|w|^2+|y(i)-y(j)|^2-2\langle w, y(j)-y(i)\rangle\\
	&=|w|^2+|y(i)-y(j)|^2-2\langle w, x(j)-x(i)\rangle-2\langle w, y(j)-x(j)\rangle+2\langle w, y(i)-x(i)\rangle\\
	&\leq |w|^2+|y(i)-y(j)|^2-2\langle w, \frac{j-i}{N}\epsilon v\rangle +4\frac{\eta}{nN^2}\\
	&\leq |w|^2+ (|x(i)-x(j)|+2\eta/N^2)^2 -2\frac{j-i}{N}\epsilon(1-\eta)/n +4\frac{\eta}{nN^2}\\
	&=|w|^2+ \left(\frac{j-i}{N}\epsilon+2\eta/N^2\right)^2 -2\frac{j-i}{N}\epsilon(1-\eta)/n +4\frac{\eta}{nN^2}.
	\end{align*}
	But we know $|w|^2\leq |(t_i,y(i)+w)-(t_j,y(j))|^2$, thus
	\begin{align*}
	|t_i-t_j|^2&\geq 2\frac{j-i}{N}\epsilon(1-\eta)/n- \left(\frac{j-i}{N}\epsilon+2\eta/N^2\right)^2 -4\frac{\eta}{nN^2}\\
	&=\frac{2(j-i)\epsilon(1-\eta)}{n}\frac{1}{N}-\frac{(j-i)^2\epsilon^2}{N^2}-O_{n,\epsilon}(1/N^2)
	\end{align*}
	Using $\frac{(j-i)^2\epsilon^2}{N^2}\leq \frac{(j-i)\epsilon}{n}\frac{1}{N}$ (as $j-i\leq N$ and $\epsilon\leq 1/n$), we get
	\begin{align*}
	|t_i-t_j|^2&\geq \left(\frac{2(j-i)\epsilon(1-\eta)}{n}-\frac{(j-i)\epsilon}{n}\right)\frac{1}{N}-O_{n,\epsilon}(1/N^2).
	\end{align*}
	As we picked $\eta=1/4$, we get
	
	$$	|t_i-t_j|^2\geq \frac{(j-i)\epsilon}{2n}\frac{1}{N}(1-O_{n,\epsilon}(1/N)),$$
	and hence
	$$|t_i-t_j|\geq \left(\frac{\epsilon}{2n}\right)^{1/2}\frac{1}{N^{1/2}}(1-O_{n,\epsilon}(1/N)),$$
	proving the claim and hence the lemma.
\end{proof}

\subsection{Finishing the proof}

We now use our earlier lemmas (especially Lemma~\ref{lemma_chartsconvenient} and Lemma~\ref{lemma_spheresaroundtypeline}) to finish the proof of Theorem~\ref{theorem_reachable}.

\begin{lemma}\label{lemma_closedirectionseasycase}
	Let $\varphi: \RR^{d-1}\to V$ be a smooth parametrization of some open set $V\subseteq \Sph^{d-1}$, and let $\epsilon>0$. Assume that $v,v'\in V$ are such that $I_v, I_{v'}$ are non-empty. Then $v'$ is reachable from $v$ along a path which is $\epsilon$-close to $\varphi\circ\gamma_{\varphi^{-1}(v),\varphi^{-1}(v')}$.
\end{lemma}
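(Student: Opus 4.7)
The plan is to reduce the problem to Lemma~\ref{lemma_chartsconvenient} by splitting the path from $v$ to $v'$ into three pieces: a short initial leg from $v$ to some nearby $\varphi(x)$, the main leg from $\varphi(x)$ to $\varphi(y)$ produced by Lemma~\ref{lemma_chartsconvenient}, and a short final leg from $\varphi(y)$ to $v'$. Write $p_0=\varphi^{-1}(v)$, $p_1=\varphi^{-1}(v')$, and let $L=\varphi\circ\gamma_{p_0,p_1}$. First I would pick $\delta>0$ small enough that (i) $\varphi(B_\delta(p_0))\subseteq B_{\epsilon/3}(v)$, (ii) $\varphi(B_\delta(p_1))\subseteq B_{\epsilon/3}(v')$, and (iii) the $\varphi$-image of every straight chart line between points of $B_\delta(p_0)$ and $B_\delta(p_1)$ is $\epsilon/3$-close to $L$. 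Apply Lemma~\ref{lemma_chartsconvenient} to open sets $U_1\subseteq B_\delta(p_0)$, $U_2\subseteq B_\delta(p_1)$ to be specified, obtaining $x\in U_1, y\in U_2$ and a path $\alpha$ from $\varphi(x)$ to $\varphi(y)$ that is $\epsilon/3$-close to $\varphi\circ\gamma_{x,y}$, and hence at most $2\epsilon/3$-close to $L$. Lemma~\ref{lemma_canchooseendpoints} then lets me concatenate $\alpha$ with the two short connecting paths, so the real task is to choose $U_1, U_2$ so that such short connections exist.

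This is handled by a case-analysis of $I_v$ (symmetrically for $I_{v'}$). When $I_v$ has non-empty interior, Lemma~\ref{lemma_nicetype} makes every sufficiently nearby direction reachable from $v$ along a short path, so any $U_1\subseteq B_\delta(p_0)$ works once $\delta$ is small enough. When $I_v$ has empty interior but is not a single point, the final part of Lemma~\ref{lemma_largeinnerproducts} provides $u,w\in\RR^d$ with $u,u+w\in I_v$ and $\langle v,w\rangle=0$, and Lemma~\ref{lemma_spheresaroundtypeline} then gives an open set $P\subseteq\Sph^{d-1}$ with $v\in\overline{P}$ whose points are reachable from $v$ along paths $2|p-v|$-close to $\{v\}$; I take $U_1=B_\delta(p_0)\cap\varphi^{-1}(P)$, which is nonempty and open, forcing $\varphi(x)\in P$. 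When $I_v=\{u\}$ is a single point, upper semicontinuity of $p\mapsto I_p$ combined with $|I_v|=1$ forces $I_p\to\{u\}$ in Hausdorff distance as $p\to v$; along a short straight chart line from $p_0$ to $x$, I would apply Lemma~\ref{lemma_pathsavoidingbadcase} (whose case-3 hypothesis is automatically satisfied at $v$), ensuring its other hypotheses for intermediate $t$ via a Baire-type perturbation of $x$, in the spirit of Lemmas~\ref{lemma_takingchartsnew}--\ref{lemma_Baire}.

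The main obstacle is this last single-point case: unlike Lemma~\ref{lemma_takingcharts}, which is set up so that \emph{both} endpoints of the family of paths $\gamma_x(t)=(t,x)$ vary freely with $x$, here I need to pin one endpoint of the family at the fixed point $p_0$. The Baire argument behind Lemma~\ref{lemma_Baire} exploits the fact that the straight lines $\gamma_x$ are parallel, which breaks down once one endpoint is pinned. The plan for getting around this is to note that the tangent direction $(\varphi\circ\gamma_x)'(0)$ still varies with $x$ even when $\gamma_x(0)$ is fixed, so the case-2 \emph{bad direction} condition at $t=0$ (namely, tangent perpendicular to $I_v-I_v$) can be ruled out for generic $x$, and an analogous nowhere-dense argument then handles the intermediate $t\in(0,1]$. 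Once continuity of the short connecting section is secured in all three cases, concatenation with $\alpha$ and with the symmetric connecting path at $v'$ yields a path from $v$ to $v'$ which, by the choice of $\delta$, is $\epsilon$-close to $L$, as required.
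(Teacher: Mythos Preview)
The hypothesis ``$I_v,I_{v'}$ are non-empty'' is a typo in the paper: under the standing assumption of this section every $I_v$ is non-empty, so as written the hypothesis is vacuous. The paper's own proof begins ``As $I_v, I_{v'}$ are non-empty, there is an open set containing $v$ and $v'$ such that whenever $p$ belongs to this set then $I_p$ has non-empty interior'', which only makes sense if the intended hypothesis is that $I_v,I_{v'}$ have \emph{non-empty interior}; this is also how the lemma is actually used (in the proof of Lemma~\ref{lemma_typepoint} it is applied to points $p_n$ chosen so that $I_{p_n}$ has non-empty interior), and it is why the general statement is recorded separately as Lemma~\ref{lemma_closedirectionswork}. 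With the intended hypothesis, only your Case~1 is needed, and your argument there (Lemma~\ref{lemma_nicetype} near each endpoint, Lemma~\ref{lemma_chartsconvenient} for the middle, then concatenate) is exactly the paper's proof.

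If you insist on proving the statement as literally written, you are really proving Lemma~\ref{lemma_closedirectionswork}, and your Case~3 is where the gap lies. You correctly identify that the Baire argument behind Lemma~\ref{lemma_Baire} relies on the family $\gamma_x$ being parallel, and your proposed fix (``the tangent direction at $0$ still varies with $x$, and an analogous nowhere-dense argument handles $t\in(0,1]$'') is not a proof: the quantitative estimate $|t_{y(i)}-t_{y(j)}|=\Omega(N^{-1/2})$ in Lemma~\ref{lemma_Baire} genuinely uses that the perturbation is transverse to the whole line, not just at one point, and it is not clear how to recover it for a pencil of lines through $p_0$. The paper sidesteps this entirely. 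In Lemma~\ref{lemma_typepoint} it argues by dichotomy: either some sequence $p_n\to v$ has $I_{p_n}$ not a single point (in which case one perturbs to make $I_{p_n}$ have non-empty interior, applies the present lemma with its \emph{intended} hypothesis to connect consecutive $p_n$'s, concatenates, and gets continuity at $v$ for free because $I_v$ is a singleton), or a whole neighbourhood of $v$ consists of single-point $I_p$'s (in which case $p\mapsto I_p$ is automatically continuous). This avoids any pinned-endpoint Baire argument.
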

\begin{proof}
	Write $u$ for $\varphi^{-1}(v)$ and $u'$ for $\varphi^{-1}(v')$. As $I_v, I_{v'}$ are non-empty, there is an open set containing $v$ and $v'$ such that whenever $p$ belongs to this set then $I_p$ has non-empty interior. By Lemma~\ref{lemma_nicetype}, there are open balls $U_1, U_2\subseteq \RR^{d-1}$ around $u$ and $u'$ (respectively) such that for any $x\in U_1$, $\varphi(x)$ is reachable from $v$ along a path which is $0$-close to $\varphi\circ\gamma_{u,x}$, and similarly for any $y\in U_2$, $\varphi(y)$ is reachable from $v'$ along a path which is $0$-close to $\varphi\circ\gamma_{u',y}$. Pick $\eta>0$ small (to be specified later). By Lemma~\ref{lemma_chartsconvenient}, we can find $x\in U_1$, $|x-u|<\eta$ and $y\in U_2$, $|y-u'|<\eta$ such that $\varphi(y)$ is reachable from $\varphi(x)$ along a path which is $(\epsilon/2)$-close to $\varphi\circ\gamma_{x,y}$. It follows that $u'$ is reachable from $u$ along a path which is $(\epsilon/2)$-close to the union of the images of $\varphi\circ\gamma_{u,x}$, $\varphi\circ\gamma_{x,y}$, $\varphi\circ\gamma_{y,u'}$. However, by taking $\eta$ small enough, we can guarantee that all points in these images are at most $\epsilon/2$ away from a point in the image of $\varphi\circ\gamma_{u,u'}$, proving the lemma.
\end{proof}

To extend Lemma~\ref{lemma_closedirectionseasycase} to all $v,v'$, including when $I_v$ or $I_{v'}$ is a single point, we will use the following lemma.

\begin{lemma}\label{lemma_typepoint}
	Let $\varphi: \RR^{d-1}\to V$ be a smooth parametrization of some open set $V\subseteq \Sph^{d-1}$. Assume that $v\in V$ and $I_v$ is a single point. Then one of the following statements hold.
	\begin{enumerate}
		\item For all $\eta>0$ there is some $p\in V$ such that $I_p$ has non-empty interior and $p$ is reachable from $v$ along a path which is $\eta$-close to $\{v\}$.
		\item There is some open neighbourhood $N$ of $v$ such that whenever $p\in N$ then $p$ is reachable from $v$ along $\varphi\circ\gamma_{\varphi^{-1}(v),\varphi^{-1}(p)}$.
	\end{enumerate}
\end{lemma}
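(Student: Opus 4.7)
My plan is to split on whether $I_p$ remains a single point throughout some neighbourhood of $v$, using upper semi-continuity of $p\mapsto I_p$ at $v$: if $p_n\to v$ and $w_n\in I_{p_n}$, then any limit point of $(w_n)$ lies in $I_v=\{c\}$ by closedness of $K$, so $w_n\to c$.

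In the single-point case, suppose $I_p=\{w(p)\}$ is a single point for every $p$ in some neighbourhood $N_0\subseteq V$ of $v$. The same closed-graph argument applied at each $p\in N_0$ shows $w:N_0\to K$ is continuous. Shrinking $N_0$ to $N$ so that $\varphi^{-1}(N)$ is a convex neighbourhood of $\varphi^{-1}(v)$ inside $\varphi^{-1}(N_0)$, every chart segment $\gamma_p=\gamma_{\varphi^{-1}(v),\varphi^{-1}(p)}$ (for $p\in N$) stays inside $\varphi^{-1}(N_0)$, so $\delta_p(t)=w(\varphi(\gamma_p(t)))$ is a continuous lift with $\delta_p(0)=c$, establishing conclusion~(2).

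Otherwise there is $p_n\to v$ with $I_{p_n}$ not a single point. By the last part of Lemma~\ref{lemma_largeinnerproducts}, either $I_{p_n}$ has non-empty interior, or it contains a segment in some direction $w_n\perp p_n$; in the latter case Lemma~\ref{lemma_spheresaroundtypeline} supplies points arbitrarily close to $p_n$ with $I$ of non-empty interior, and in the former case we can still apply Lemma~\ref{lemma_spheresaroundtypeline} at $p_n$ using a perpendicular segment inside $\mathrm{int}(I_{p_n})$. Either way we obtain ``good'' directions $q_n\to v$ with $I_{q_n}$ of non-empty interior. Given $\eta>0$, I pick such a $q$ very close to $v$, fix $w\in\mathrm{int}(I_q)$ with $B_\rho(w)\subseteq I_q$ (so $B_\rho(w+q)\subseteq K$), and use the candidate
\[
\hat\gamma(t)=\frac{(1-t)v+tq}{|(1-t)v+tq|},\qquad \delta(t)=(1-t)c+tw.
\]
Continuity and the endpoint conditions are immediate, $|\hat\gamma(t)-v|\le 2|q-v|$ makes the path $\eta$-close to $\{v\}$ once $|q-v|\le\eta/2$, and $\delta(t)\in K$ by convexity. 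Decomposing
\[
\delta(t)+\hat\gamma(t)=\bigl((1-t)(c+v)+t(w+q)\bigr)+\bigl(\hat\gamma(t)-((1-t)v+tq)\bigr),
\]
the main term is a convex combination of $c+v\in K$ and $w+q\in\mathrm{int}(K)$ sitting inside a $K$-ball of radius $\ge t\rho$, while the normalisation error has magnitude $1-|(1-t)v+tq|\le t|v-q|^2$. So the sum lies in $K$ whenever $|v-q|^2<\rho$, giving reachability and thus conclusion~(1).

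The main obstacle is ensuring $|v-q|^2<\rho$ simultaneously with $|v-q|$ small: a priori $\rho$ could shrink faster than $|v-q|$ as $q\to v$. I expect to handle this by tracking $\rho$ quantitatively through the proofs of Lemma~\ref{lemma_spheresaroundtypeline} and Lemma~\ref{lemma_longsegment}. Parametrising the point $q$ produced by Lemma~\ref{lemma_spheresaroundtypeline} as $q=p_n+\zeta$ with $\zeta$ having a controlled component $\alpha w_n$ along the segment direction, the strict inequality $\langle q,p_n+w_n\rangle>1$ translates (via Lemma~\ref{lemma_longsegment}) into a lower bound $\rho\gtrsim|w_n|\sqrt{\alpha}$, while $|v-q|\lesssim|v-p_n|+\alpha|w_n|$. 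Choosing $\alpha$ small and then $p_n$ close enough to $v$ therefore forces both $|v-q|<\eta/2$ and $|v-q|^2<\rho$, completing the argument.
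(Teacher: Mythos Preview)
Your treatment of the second case (an entire neighbourhood of $v$ consisting of single-point $I_p$'s) is correct and essentially identical to the paper's.

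The first case, however, has a genuine gap. The quantitative tracking you propose cannot close the argument, for a reason independent of the precise exponent in your bound on $\rho$. You need $|v-q|^2<\rho$ with $q$ obtained from $p_n$ via Lemma~\ref{lemma_spheresaroundtypeline}. But the region $P$ in that lemma is contained in an $O(|w_n|)$-neighbourhood of $p_n$ (the two caps have angular radius $\arccos(1/\sqrt{1+|w_n|^2})\approx|w_n|$), so $|v-q|\ge |v-p_n|-O(|w_n|)$. Meanwhile, tracing Lemma~\ref{lemma_largeinnerproducts} and Lemma~\ref{lemma_longsegment} gives $\rho=O(|w_n|^2)$, not $|w_n|\sqrt{\alpha}$: the excess $\langle q,p_n+w_n\rangle-1$ is of order $\alpha|w_n|^2$, and the proof of Lemma~\ref{lemma_largeinnerproducts} converts an excess $\delta$ into a segment of length $1+O(\delta)$, whence $\rho=O(\delta)=O(\alpha|w_n|^2)$. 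Either way the crucial point is that nothing bounds $|w_n|$ from below in terms of $|v-p_n|$: since $I_v$ is a single point, upper semicontinuity forces $\operatorname{diam} I_{p_n}\to 0$, and it may do so arbitrarily fast (e.g.\ $|w_n|=\exp(-1/|v-p_n|)$). In that regime $|v-q|^2\sim|v-p_n|^2\gg\rho$, so your single-geodesic construction fails, and no choice of $\alpha$ and $n$ rescues it.

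The paper sidesteps this entirely. Rather than a single geodesic from $v$ to one good point, it first replaces the $p_n$ by nearby points with $I_{p_n}$ of non-empty interior, then invokes Lemma~\ref{lemma_closedirectionseasycase} (which rests on the Baire-category machinery of Lemmas~\ref{lemma_takingchartsnew}--\ref{lemma_chartsconvenient}) to connect each $p_n$ to $p_{n+1}$ by a path $\eta/2^n$-close to the chart segment. Concatenating these paths (with suitable reparametrisation) gives $\gamma,\delta$ on $(0,1]$; continuity at $0$ then follows exactly from the single-point property of $I_v$ via the closed-graph argument you already used. So the missing ingredient is not a sharper $\rho$-estimate but the reachability between two \emph{good} points provided by Lemma~\ref{lemma_closedirectionseasycase}.
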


\begin{proof}
	First, assume that there is a sequence of points $(p_n)$ in $V$ converging to $v$ such that for all $n$, $I_{p_n}$ is not a single point. We will show that the first conclusion holds. By Lemma~\ref{lemma_spheresaroundtypeline} (and the final part of Lemma~\ref{lemma_largeinnerproducts}), we may modify $p_n$ slightly so that $I_{p_n}$ has non-empty interior for all $n$. Let $\eta>0$ be given. By Lemma~\ref{lemma_closedirectionseasycase}, we can take $\gamma_n: [0,1]\to V$ and $\delta_n:[0,1]\to K$ continuous functions such that $\delta_n(t)\in I_{\gamma_n(t)}$ for all $(n,t)$, $\gamma_n(0)=p_n$ for all $n$, $\gamma_n(1)=p_{n+1}$ for all $n$, and $\gamma_n(t)$ is at most $\eta/2^n$ away from some point on the image of $\varphi\circ\gamma_{\varphi^{-1}(p_n),\varphi^{-1}(p_{n+1})}$ for all $(n,t)$. By taking a subsequence of the form $(p_n)_{n>N_0}$, we may assume that for all $n$, all 
	points on the image of $\varphi\circ\gamma_{\varphi^{-1}(p_n),\varphi^{-1}(p_{n+1})}$ are at most $\eta/2$ away from $v$. So $|\gamma_n(t)-v|\leq \eta$ for all $(n,t)$. Using Lemma~\ref{lemma_canchooseendpoints}, we may also assume that $\delta_n(1)=\delta_{n+1}(0)$ for all $n$.
	
	Now define $\gamma: [0,1]\to V$ and $\delta: [0,1]\to K$ as follows. Let $\gamma(0)=v$ and let $\delta(0)$ be the unique point in $I_v$. For $t\in (0,1]$, let $n$ be such that $\frac{1}{n+1}\leq t\leq \frac{1}{n}$, and set $\gamma(t)=\gamma_n(n(n+1)(\frac{1}{n}-t))$ and $\delta(t)=\delta_n(n(n+1)(\frac{1}{n}-t))$. It is easy to check that $\gamma, \delta$ are well-defined and continuous on $(0,1]$, and $|\gamma(t)-v|\leq \eta$ for all $t$. Moreover, using that $(p_n)\to v$ and $\gamma_n(t)$ is at most $\eta/2^n$ away from some point on the image of $\varphi\circ\gamma_{\varphi^{-1}(p_n),\varphi^{-1}(p_{n+1})}$ for all $(n,t)$, we also get that $\gamma$ is continuous at $0$. To show continuity of $\delta$ at $0$, assume that $(t_n)\to 0$ and $(\delta(t_n))\to z$, we prove $z=\delta(0)$. We know $\delta(t_n),\delta(t_n)+\gamma(t_n)\in K$. Using that $K$ is closed and $\gamma$ is continuous, taking limits gives $z, z+\gamma(0)\in K$, i.e., $z, z+v\in K$, i.e., $z\in I_v$. Hence $z=\delta(0)$, as claimed. This proves the claim in the first case.
	
	Now assume that such a sequence $(p_n)$ does not exist. This means that there is an open neighbourhood of $v$ consisting only of points $p$ such that $I_p$ is a single point. It follows that there is an open ball $B$ around $u=\varphi^{-1}(v)$ such that whenever $x\in B$ then $I_{\varphi(x)}$ is a single point. Let $N=\varphi(B)$, so $N$ is an open neighbourhood of $v$. Given $p\in N$, let $\varphi^{-1}(p)=q$. We show $p$ is reachable from $v$ along $\varphi\circ\gamma_{u,q}$. Indeed, let $\gamma(t)=\varphi((1-t)u+tq)$ and let $\delta(t)$ be the unique point in $I_{\gamma(t)}$. Then $\delta$ is continuous by an argument almost identical to the one above. Indeed, if $(t_n)\to t$ and $(\delta(t_n))\to z$, then $\delta(t_n), \delta(t_n)+\gamma(t_n)\in K$. Taking limits gives $z,z+\gamma(t)\in K$, i.e., $z\in I_{\gamma(t)}$, i.e., $z=\delta(t)$, as required. This finishes the proof of the lemma.
\end{proof}

\begin{lemma}\label{lemma_closedirectionswork}
	Let $\varphi: \RR^{d-1}\to V$ be a smooth parametrization of some open set $V\subseteq \Sph^{d-1}$, and let $\epsilon>0$. Then for any $v,v'\in V$, $v'$ is reachable from $v$ along a path which is $\epsilon$-close to $\varphi\circ\gamma_{\varphi^{-1}(v),\varphi^{-1}(v')}$.
\end{lemma}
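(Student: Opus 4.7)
The plan is to reduce to Lemma~\ref{lemma_chartsconvenient}, which already gives straight-line reachability for some pair of nearby chart points, by first perturbing both endpoints slightly. For a point $p\in V$ I will call $q\in V$ \emph{locally reachable from $p$} if for every $\eta>0$, $q$ is reachable from $p$ along a path $\eta$-close to $\{p\}$, and write $\mathcal{O}(p)$ for the set of such $q$. The key claim, which I would prove first, is that for every $p\in V$, $\mathcal{O}(p)$ contains an open subset $O_p$ of $V$ with $p\in\overline{O_p}$.

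I would establish this claim by splitting on the shape of $I_p$. If $I_p$ has non-empty interior, Lemma~\ref{lemma_nicetype} places an entire open neighbourhood of $p$ inside $\mathcal{O}(p)$. If $I_p$ has empty interior but is not a single point, the last part of Lemma~\ref{lemma_largeinnerproducts} gives a pair of points of $I_p$ whose difference is perpendicular to $p$, so the hypotheses of Lemma~\ref{lemma_spheresaroundtypeline} are met and its set $P$ is an open subset of $\mathcal{O}(p)$ with $p\in\overline P$. If $I_p$ is a single point, Lemma~\ref{lemma_typepoint} gives two possibilities: in its second conclusion, an open neighbourhood of $p$ lies in $\mathcal{O}(p)$ via straight-line chart paths directly; in its first conclusion we get a sequence $p_n\to p$ of locally reachable points with $I_{p_n}$ having non-empty interior, and for each such $p_n$ a small open neighbourhood of $p_n$ is reachable from $p_n$ via Lemma~\ref{lemma_nicetype}, so concatenating these with the short paths from $p$ to $p_n$ using Lemma~\ref{lemma_canchooseendpoints} and taking the union over $n$ provides the desired $O_p$.

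Assuming the claim, write $u=\varphi^{-1}(v)$ and $u'=\varphi^{-1}(v')$, and choose open balls $B\ni u$, $B'\ni u'$ small enough that every straight line $\varphi\circ\gamma_{x,y}$ with $x\in B$, $y\in B'$ is uniformly $(\epsilon/3)$-close to $\varphi\circ\gamma_{u,u'}$ and similarly $\varphi(B), \varphi(B')$ lie within $\epsilon/3$ of $v$ and $v'$. Then $U_1=B\cap\varphi^{-1}(O_v)$ and $U_2=B'\cap\varphi^{-1}(O_{v'})$ are non-empty open subsets of $\RR^{d-1}$, so Lemma~\ref{lemma_chartsconvenient} produces $x\in U_1$ and $y\in U_2$ with $\varphi(y)$ reachable from $\varphi(x)$ along a path $(\epsilon/3)$-close to $\varphi\circ\gamma_{x,y}$. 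Concatenating, via Lemma~\ref{lemma_canchooseendpoints}, a short path from $v$ to $\varphi(x)$ (chosen $(\epsilon/3)$-close to $\{v\}$ since $\varphi(x)\in\mathcal{O}(v)$), the path from $\varphi(x)$ to $\varphi(y)$, and a short path from $\varphi(y)$ to $v'$ (chosen $(\epsilon/3)$-close to $\{v'\}$), yields a path from $v$ to $v'$ which stays within $\epsilon$ of $\varphi\circ\gamma_{u,u'}$ by construction.

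The main obstacle is the third case of the claim: when $I_v$ is a single point and Lemma~\ref{lemma_typepoint} only produces a sequence of locally reachable points, we must upgrade this to an open subset of $\mathcal{O}(v)$, being careful that for each $p_n$ the neighbourhood we adjoin is chosen small enough, in a manner depending on the $\eta$-closeness of the path from $v$ to $p_n$ and on $|p_n-v|$, so that the concatenated paths to points in those neighbourhoods remain $\eta$-close to $\{v\}$ for any prescribed $\eta$. The other cases reduce to direct applications of prior lemmas, but this one needs a slightly delicate matching of scales.
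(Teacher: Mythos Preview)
Your overall strategy matches the paper's exactly: find an open set of points near each endpoint that are reachable from that endpoint by short paths, apply Lemma~\ref{lemma_chartsconvenient} to those open sets, and concatenate. However, your definition of ``locally reachable'' contains a quantifier error that makes the key claim vacuously false. You require that $q\in\mathcal{O}(p)$ satisfy, \emph{for every} $\eta>0$, that $q$ is reachable from $p$ along a path $\eta$-close to $\{p\}$. But any such path ends at $q$, so $|q-p|\le\eta$ for all $\eta>0$, forcing $q=p$. Thus $\mathcal{O}(p)=\{p\}$ and cannot contain a nonempty open set.

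The fix is simply to let the open set depend on $\eta$: for each $\eta>0$ one produces an open $O_p^\eta\subseteq V$ (with $O_p^\eta\subseteq B_\eta(p)$, and nonempty) whose every point is reachable from $p$ along a path $\eta$-close to $\{p\}$. This is exactly what the paper does. With this correction your three-case analysis goes through, and the ``delicate matching of scales'' you flag in the singleton case evaporates: for a \emph{fixed} $\eta$, conclusion~1 of Lemma~\ref{lemma_typepoint} hands you a single point $p$ with $I_p$ of nonempty interior, reachable from $v$ along an $(\eta/2)$-close path; then Lemma~\ref{lemma_nicetype} gives a small open ball around $p$ (of radius less than $\eta/2$) all of whose points are reachable from $p$, and concatenation via Lemma~\ref{lemma_canchooseendpoints} yields the desired $O_v^\eta$. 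No sequence or union is needed. In the final step, since $O_v^\eta\subseteq B_\eta(v)$ automatically, your auxiliary balls $B,B'$ are not really needed either: taking $\eta$ small forces $\varphi^{-1}(O_v^\eta)$ and $\varphi^{-1}(O_{v'}^\eta)$ close to $u,u'$, which already makes $\varphi\circ\gamma_{x,y}$ close to $\varphi\circ\gamma_{u,u'}$.
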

\begin{proof}
	Write $u$ for $\varphi^{-1}(v)$ and $u'$ for $\varphi^{-1}(v')$.
	Let $\eta>0$ be small (specified later). There is some open set $V_1\subseteq V$ (not necessarily containing $v$) such that any $p\in V_1$ is reachable from $v$ along a path which is $\eta$-close to $\{v\}$. Indeed, this follows from Lemma~\ref{lemma_spheresaroundtypeline} if $I_v$ is not a single point, and from Lemma~\ref{lemma_typepoint} (together with Lemma~\ref{lemma_nicetype}) when $I_v$ is a single point. Similarly, there is some $V_2\subseteq V$ such that any $q\in V_2$ is reachable from $v'$ along a path which is $\eta$-close to $\{v'\}$. In particular, $|v-p|\leq \eta$ and $|v'-q|\leq \eta$ for any such $p,q$.
	
	But, by Lemma~\ref{lemma_chartsconvenient}, there are some $p\in V_1, q\in V_2$ such that $q$ is reachable from $p$ along a path which is $\eta$-close to $\varphi\circ\gamma_{\varphi^{-1}(p),\varphi^{-1}(q)}$. Hence $v'$ is reachable from $v$ along a path which is $\eta$-close to $\{v,v'\}\cup\operatorname{Im}(\varphi\circ\gamma_{\varphi^{-1}(p),\varphi^{-1}(q)})$. By taking $\eta$ small enough, we can guarantee that any point in $\operatorname{Im}(\varphi\circ\gamma_{\varphi^{-1}(p),\varphi^{-1}(q)})$ is at most $\epsilon/2$ away from some point in $\operatorname{Im}(\varphi\circ\gamma_{\varphi^{-1}(v),\varphi^{-1}(v')})$. The result follows.
\end{proof}

\begin{proof}[Proof of Theorem~\ref{theorem_reachable}]
	Using Lemma~\ref{lemma_closedirectionswork}, it is easy to see that for any $t\in [0,1]$ there is some $\delta_t>0$ such that whenever $t'\in [0,1]$ and $|t-t'|<\delta_t$, then $\gamma(t')$ is reachable from $\gamma(t)$ along a path which is $\epsilon$-close to $\{\gamma(t)\}$. The result follows easily (using the compactness of $[0,1]$ and Lemma~\ref{lemma_canchooseendpoints}).
\end{proof}
\begin{proof}[Proof of Theorem~\ref{theorem_Kakeyasegments}]
	The result follows immediately from Theorem~\ref{theorem_reachable} when $d\geq 3$, and from Theorem~\ref{theorem_continuousin2d} when $d=2$ (using Lemma~\ref{lemma_canchooseendpoints}, which also holds for $d=2$).
\end{proof}

\section{Counterexample for general bodies}\label{section_reachablecounterexample}

In this section, our goal is to prove Theorem~\ref{theorem_counterexamplereachable_introduction}, restated below for convenience.
\fourdimcounterexample*
	We will use similar ideas as for Theorem~\ref{theorem_continuouscounterexample} (but this proof will be significantly more complicated). Note that it is sufficient to find a construction where $S$ is compact but not necessarily convex, as the same set $K$ will still provide a counterexample when $S$ is replaced by its convex hull. The set $S$ in our construction will be given by $$S=\{(x,y,z,w)\in\RR^4: x^2+y^2+z^2+w^2=1, x=\pm 1/2\},$$
	see Figure~\ref{figure_counterexample}. 
	
	\begin{figure}[h!]
		\includegraphics[clip,trim=0cm 0cm 0cm 0cm, width=0.3\linewidth]{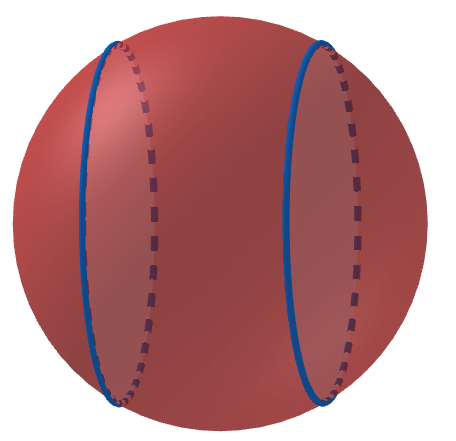}
		\centering
		\captionsetup{justification=centering}
		\caption{The set $S$ in our construction is a $4$-dimensional analogue of the blue set (or the convex hull of the blue set), which is a subset of the (red) unit sphere}\label{figure_counterexample}
	\end{figure}
In the proof of Theorem~\ref{theorem_continuouscounterexample} we made sure that our set $K$ lied inside the cylinder $\{(x,y,z):y^2+z^2\leq 1/4\}$, and we controlled the intersection with the boundary of the cylinder. This control enabled us to prove discontinuity by observing that any segment in a direction of the $yz$ plane had to intersect the boundary of the cylinder in a pair of points $(x,y,z), (x,-y,-z)$.

We will attempt to do something similar here. Our construction will be contained inside the set $\{(x,y,z,w):x^2+y^2\leq 1\}$, and we will control the intersection with the boundary $C=\{(x,y,z,w)\in\RR^4:x^2+y^2=1\}$ of that set. Observe that any rotated copy of $S$ is of the form $$S_v=\{v'\in\RR^4: |v'|=1, \langle v,v'\rangle=\pm 1/2\}$$ for some $v\in \RR^4$ with $|v|=1$. It is not difficult to deduce that if we only rotate $S$ slightly, then the rotated copy intersects $C$ in two pairs of antipodal points. (See Figure~\ref{figure_counterexample}: great circles close to the one given by $x^2+y^2=1, z=0$ intersect the blue set in two pairs of antipodal points). We will have to make sure that $K$ contains translated copies of any two such pairs of antipodal points (so that a translate of $\rho(S)$ is contained in $K$ for all $\rho$), so for all $(x_1,y_1),(x_2,y_2)\in \Sph^1$ we will have some $(z,w)$ such that $(\pm(x_1,y_1),z,w),(\pm(x_2,y_2),z,w)\in C\cap K$. Meanwhile, we will have restrictions on $C\cap K$ in such a way that we guarantee discontinuity.\medskip

Let us now turn to the formal proof of Theorem~\ref{theorem_counterexamplereachable_introduction}. As mentioned above, we will control the intersection of $K$ with $C$, i.e., for all $(x,y)\in \Sph^1$ we will control the set $A_{x,y}=\{(z,w):(x,y,z,w)\in K\}$. The following lemma lists all the properties that we will need -- for now, we only show that such sets $A_{x,y}$ exist in $\RR^2$, at this point they do not necessarily come from a body $K$ in $\RR^4$.

\begin{lemma}\label{lemma_cylinderproperties}
	There exist compact convex sets $(A_p)_{p\in \Sph^1}$ in $\RR^2$ such that the following properties hold.
	\begin{enumerate}
		\item For all $p,q\in \Sph^1$, $A_p\cap A_q\not =\emptyset$.
		\item For all $p\in \Sph^1$, $A_p=A_{-p}$.
		\item For all $p\in \Sph^1$ and all $t\in A_p$ we have $|t|\leq 1$.
		\item The set $\{(p,t):p\in \Sph^1, t\in A_p\}$ is closed, i.e, whenever $(p_n)\to p$ in $\Sph^1$ and $(t_n)\to t$ in $\RR^2$ with $t_n\in A_{p_n}$ for all $n$, then $t\in A_p$.
		\item For all $\epsilon>0$ and $(z,w)\in \RR^2$ there is some $r\in (0,\epsilon)$ such that whenever $p=(x,y)\in \Sph^1$ with $|x-1/2|=r$ then all points of $A_p$ are at least distance $1/100$ away from $(z,w)$.
	\end{enumerate}
\end{lemma}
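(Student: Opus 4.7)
My plan is to let each $A_p$ be a chord of the closed unit disk in $\RR^2$ at perpendicular distance $1/2$ from the origin, whose outward normal $u(p)$ will take values in a closed arc $U \subseteq \Sph^1$ of angular length $2\pi/3 - 1/100$ and will oscillate densely near the two classes of $\Sph^1/\{\pm 1\}$ corresponding to $|x|=1/2$. Writing $C(u) = \{v \in \overline{B_1(0)} : \langle v, u\rangle = 1/2\}$, a short direct computation yields $C(u_1) \cap C(u_2) \neq \emptyset$ if and only if $\langle u_1, u_2\rangle \geq -1/2$, so restricting $u(p) \in U$ forces pairwise intersection, giving property (1); property (3) is built in.

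Parametrise $\Sph^1/\{\pm 1\}$ by $\theta \in [0, \pi)$, so that the classes with $|x| = 1/2$ are $\theta = \pi/3$ and $\theta = 2\pi/3$. For each small $r > 0$, the shell $|x - 1/2| = r$ descends to four classes at $\pi/3 \pm \delta(r)$ and $2\pi/3 \pm \delta(r)$ (up to asymmetries of order $r^2$), with $\delta(r) \sim 2r/\sqrt{3}$. Fixing a sequence $(q_n)$ dense in $U$ and a strictly decreasing $r_n \to 0$, I would build a continuous $u : [0, \pi) \setminus \{\pi/3, 2\pi/3\} \to U$ that equals $q_n$ on a small plateau around each of the four angles corresponding to shell $r_n$, interpolates linearly between plateaus, and is constant outside small neighbourhoods of $\pi/3$ and $2\pi/3$. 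For $\theta$ in the domain set $A_p = C(u(\theta))$; at the two exceptional classes (where $u$ has no limit) set $A_p = \operatorname{conv}\bigl(\bigcup_{u \in U} C(u)\bigr)$. This convex hull meets every chord of the family (preserving (1)), absorbs every Kuratowski upper limit of $A_{p_n}$ for $p_n$ tending to the class (giving the closed graph condition (4)), and lies in $\overline{B_1(0)}$. Property (2) is immediate because everything is defined on the quotient.

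For property (5), given $(z, w) \in \RR^2$ and $\epsilon > 0$, let $F(z, w) = \{u \in U : |\langle (z, w), u\rangle - 1/2| < 1/100\}$: on any shell $r_n$ with $q_n \notin F(z, w)$, every one of the four shell points has $A_p = C(q_n)$, and the line-distance bound gives $d((z, w), C(q_n)) \geq |\langle (z, w), q_n\rangle - 1/2| \geq 1/100$. Density of $(q_n)$ and $r_n \to 0$ then produce the required $r \in (0, \epsilon)$, provided that $U \setminus F(z, w)$ has non-empty interior for every $(z, w)$. This last point is where the main work lies: one must verify that the linear functional $u \mapsto \langle (z, w), u\rangle$ cannot have its restriction to $U$ trapped inside the short interval $(1/2 - 1/100, 1/2 + 1/100)$. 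A short case split suffices: for $|(z, w)| < 1/2 - 1/100$ the whole range lies strictly below $1/2 - 1/100$, and otherwise the range has length at least $|(z, w)|\bigl(1 - \cos(\pi/3 - 1/200)\bigr) > 2/100$, too large to fit inside the forbidden interval.
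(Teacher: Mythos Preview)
Your construction is correct and takes a genuinely different route from the paper's. The paper uses a finite palette: it fixes the three sides $T_1,T_2,T_3$ of a triangle $T\subseteq\RR^2$, sets $s=\min(|x-1/2|,|x+1/2|)$, and assigns $A_p=T_{k\bmod 3}$ according to the dyadic scale $k$ with $2^{-(k+1)}<s<2^{-k}$ (with $A_p=T$ at the transition values $s=2^{-k}$ and at $s=0$). Pairwise intersection is immediate since any two of $T_1,T_2,T_3,T$ share a vertex; the closed-graph property is almost free because the assignment is locally constant away from the countable set of transition values, where the full triangle $T$ absorbs all limits; and property~(5) follows because every point of $\RR^2$ is at distance $\geq 1/100$ from at least one side $T_i$, so one simply picks $r$ in the dyadic band with the matching residue mod~$3$.

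Your chord family $C(u)$ replaces this discrete palette by a continuum and pushes the oscillation into the normal direction $u(\theta)$. What you gain is a more geometric picture in which property~(1) becomes the clean condition $\langle u_1,u_2\rangle\geq -1/2$; what you pay is the extra verification of the range estimate $|(z,w)|\bigl(1-\cos(\pi/3-1/200)\bigr)>2/100$ and the need to patch the two exceptional classes with the convex hull $\operatorname{conv}\bigl(\bigcup_{u\in U}C(u)\bigr)$ to rescue property~(4). One small point worth making explicit when you write this up: the three constants you place on the components of $[0,\pi)$ away from $\pi/3$ and $2\pi/3$ must agree across the identification $0\sim\pi$, so that $u$ descends to a continuous map on the circle $\Sph^1/\{\pm1\}$ minus the two exceptional points.
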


Note that such sets $A_p$ cannot exist in $\RR$ instead of $\RR^2$: each $A_p$ would have to be a non-empty closed bounded interval, and then $\bigcap_{p\in \Sph^1}A_p$ would be non-empty by the first condition, so the last property could not be satisfied. This is the reason we need $\RR^4$ for our construction instead of $\RR^3$.

Before we prove Lemma~\ref{lemma_cylinderproperties}, we state two lemmas which show why it is useful: Theorem~\ref{theorem_counterexamplereachable_introduction} will follow immediately from Lemma~\ref{lemma_cylinderproperties} and these lemmas. Recall that $C=\Sph^1\times \RR^2$ and $S_v=\{v'\in\RR^4: |v'|=1, \langle v,v'\rangle=\pm 1/2\}$. 

\begin{lemma}\label{lemma_constructionexists}
	Assume that we have compact convex sets $(A_p)_{p\in \Sph^1}$ in $\RR^2$ such that the following properties hold.
	\begin{enumerate}
		\item For all $p,q\in \Sph^1$, $A_p\cap A_q\not =\emptyset$.
		\item For all $p\in \Sph^1$, $A_p=A_{-p}$.
		\item For all $p\in \Sph^1$ and all $t\in A_p$ we have $|t|\leq 1$.
		\item The set $\{(p,t):p\in \Sph^1, t\in A_p\}$ is closed, i.e, whenever $(p_n)\to p$ in $\Sph^1$ and $(t_n)\to t$ in $\RR^2$ with $t_n\in A_{p_n}$ for all $n$, then $t\in A_p$.
	\end{enumerate}
	Then there exists a compact convex $S$-Kakeya set $K\subseteq \RR^4$ such that $K\subseteq \{(x,y,z,w)\in \RR^4:x^2+y^2\leq 1\}$ and $K\cap C\subseteq \{(p,t):p\in \Sph^1, t\in A_p\}$.
\end{lemma}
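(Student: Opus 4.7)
The plan is to construct $K$ as the closed convex hull of two types of generators: the prescribed ``boundary data'' $U = \{(p, t) : p \in \Sph^1,\, t \in A_p\}$, and a translate $S_v + w_v$ of every rotated copy $\rho(S)$. Parametrising rotations by $v = \rho(e_1) \in \Sph^3$ (with $e_1 = (1,0,0,0)$), one has $\rho(S) = S_v = \{v' \in \Sph^3 : \langle v, v'\rangle = \pm 1/2\}$.

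Writing $v_{xy}$ for the projection of $v$ to its first two coordinates, a direct calculation shows that $S_v \cap C$ consists of the (at most) four points in $\Sph^1 \times \{(0,0)\}$ whose $xy$-projections $\pm q_1(v), \pm q_2(v)$ are unit vectors satisfying $\langle v_{xy}, q_j(v)\rangle = 1/2$; these exist precisely when $|v_{xy}| \ge 1/2$. I would take each $w_v$ of the form $(0, 0, t_v)$ with $t_v \in \RR^2$, chosen as follows: if $|v_{xy}| \ge 1/2$, properties (1) and (2) give $A_{q_1(v)} \cap A_{q_2(v)} \ne \emptyset$, so pick $t_v$ in that intersection; if $0 < |v_{xy}| < 1/2$, \emph{anchor} the choice by taking $t_v \in A_{v_{xy}/|v_{xy}|}$; if $v_{xy}=0$, set $t_v = (0,0)$ (here $S_v$ already projects into the open disk of radius $\sqrt{3}/2$, so there are no constraints from the cylinder boundary). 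Then define
\[
V = \bigcup_{v \in \Sph^3}(S_v + w_v), \qquad K = \operatorname{conv}\bigl(U \cup \overline{V}\bigr).
\]
The easy verifications: by property (4), $U$ is closed; it is bounded by property (3), hence compact. Each $S_v + w_v$ is bounded (since $S_v \subseteq \Sph^3$ and $|t_v| \le 1$ by (3)) and sits inside the closed cylinder $\{x^2+y^2 \le 1\}$, so $\overline V$ is compact and contained in the cylinder. Hence $K$, the convex hull of a compact subset of $\RR^4$, is compact and convex; it lies in the cylinder and is $S$-Kakeya because $S_v + w_v \subseteq V \subseteq K$ for every $v$.

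The one nontrivial step is verifying $K \cap C \subseteq U$. Taking $(p, t) \in K$ with $p \in \Sph^1$ and writing it via Carathéodory as $(p, t) = \sum_i \lambda_i y_i$ with $y_i \in U \cup \overline V$, strict convexity of $\Sph^1$ forces $(y_i)_{xy} = p$ for every $i$ with $\lambda_i > 0$. For $y_i \in U$ we have $(y_i)_{zw} \in A_p$ automatically. For $y_i \in \overline V$, I would write $y_i = \lim(v_n' + w_{v_n})$ and pass to subsequences with $v_n \to v$, $v_n' \to v' \in S_v$ and $t_{v_n} \to t'$; since $(v')_{xy} = p$ lies on $\Sph^1$, we must have $v' = (p, 0, 0)$, and $\langle v, v'\rangle = \pm 1/2$ then forces $|v_{xy}| \ge 1/2$ and $p = \pm q_j(v)$ for some $j$. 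The anchoring convention places each $t_{v_n}$ into $A_{q_*(v_n)}$ for a unit vector $q_*(v_n)$ converging to $q_j(v)$, so the closed-graph property (4) combined with the symmetry $A_q = A_{-q}$ yields $t' \in A_p$; convexity of $A_p$ then gives $t = \sum \lambda_i (y_i)_{zw} \in A_p$, as required.

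The main obstacle is this limit argument, especially the borderline case $|v_{xy}| = 1/2$: along a sequence $v_n \to v$ the quantity $|v_n^{xy}|$ can straddle $1/2$ and so $t_{v_n}$ is selected by different rules along the sequence. The anchoring convention for $0 < |v_{xy}| < 1/2$ is precisely tailored so that $v_n^{xy}/|v_n^{xy}|$ converges to the same direction as $q_1(v_n), q_2(v_n)$ do when they exist, which is what makes the limiting $t'$ land in the correct $A_p$.
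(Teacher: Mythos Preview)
Your approach is essentially the same as the paper's: build $K$ as the convex hull of translates $S_v+(0,0,t_v)$ with $t_v$ chosen so that the points where $S_v$ meets the boundary cylinder $C$ land in the prescribed fibres $A_p$, then use strict convexity of the disk to show that any point of $K\cap C$ is a convex combination of generators with the same $xy$-part, and finally use the closed-graph property (4) to pass to limits. The paper differs only in bookkeeping: it uses set-valued translations $T_v$ (rather than a single selection $t_v$) and a three-zone partition $V_1,V_2,V_3$ with an explicit buffer $1/100\le v_1^2+v_2^2\le 1/4$, which lets it prove closedness of each $K_i$ directly instead of taking a closure $\overline V$ and arguing along sequences. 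Your anchoring rule $t_v\in A_{v_{xy}/|v_{xy}|}$ for $0<|v_{xy}|<1/2$ plays exactly the role of the paper's projection $p_v$ onto the critical sphere $|v_{xy}|=1/2$, ensuring the limit $t'$ lands in the correct $A_p$ at the borderline; the inclusion of $U$ among the generators is harmless but unnecessary.
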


\begin{lemma}\label{lemma_constructionworks}
	Assume that $(A_p)_{p\in \Sph^1}$ in $\RR^2$ are compact convex sets such that the following property holds: for all $\epsilon>0$ and $(z,w)\in \RR^2$ there is some $r\in (0,\epsilon)$ such that whenever $p=(x,y)\in \Sph^1$ with $|x-1/2|=r$ then all points of $A_p$ are at least distance $1/100$ away from $(z,w)$. Assume furthermore that $K$ is a compact convex set such that $K\subseteq \{(x,y,z,w)\in \RR^4:x^2+y^2\leq 1\}$ and $K\cap C\subseteq \{(p,t):p\in \Sph^1, t\in A_p\}$. Then whenever $\gamma: [0,1]\to \Sph^3$ and $\delta: [0,1]\to \RR^4$ are continuous such that $\gamma(0)=(1,0,0,0)$ and $S_{\gamma(t)}+\delta(t)\subseteq K$ for all $t$, then $\gamma(t)=(1,0,0,0)$ for all $t$.
\end{lemma}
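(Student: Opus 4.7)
The plan is to derive a contradiction from the assumption that $\gamma$ ever leaves $v_0 := (1, 0, 0, 0)$. The key geometric step is to describe $S_v \cap C$ for $v$ near $v_0$: parametrising each slice $\{v' \in \Sph^3 : \langle v, v'\rangle = \pm 1/2\}$ as $\pm v/2 + r$ with $r \perp v$, $|r| = \sqrt{3}/2$, one checks that a slice point lies on $C$ exactly when $(r_3, r_4) = (\mp v_3/2, \mp v_4/2)$, and the remaining equations on $(r_1, r_2)$ (intersection of a circle with a line) have two solutions. This yields four points of $S_v \cap C$ whose $xy$-projections are $\{\pm p^+(v), \pm p^-(v)\} \subseteq \Sph^1$, with $p^{\pm}(v_0) = (1/2, \pm \sqrt{3}/2)$. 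The two branches are distinct at $v_0$, so $p^+, p^-$ extend to continuous functions on a neighbourhood of $v_0$.

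Since $v' \mapsto -v'$ interchanges the two slices, each of these four points has its antipode in $S_v$. Imposing $v' + \delta(t) \in K \subseteq \{x^2 + y^2 \leq 1\}$ on the antipodal pair $v' = (p^+(v), 0, 0)$ and $-v'$, summing the resulting squared inequalities and using $|p^+(v)| = 1$ yields $\delta_1^2 + \delta_2^2 \leq 0$, so $\delta_1(t) = \delta_2(t) = 0$ whenever $\gamma(t)$ lies in the neighbourhood. The four translated boundary points then sit on $C \cap K$ with $xy$-projection $\pm p^{\pm}(\gamma(t))$ and last two coordinates $(\delta_3(t), \delta_4(t))$, so the hypothesis $K \cap C \subseteq \{(p, t): t\in A_p\}$ forces $(\delta_3(t), \delta_4(t)) \in A_{p^+(\gamma(t))} \cap A_{p^-(\gamma(t))}$ (using $A_{-p} = A_p$).

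The crucial non-degeneracy fact I would then establish is that for $v$ near $v_0$, the system $p^+_1(v) = p^-_1(v) = 1/2$ has $v_0$ as its unique solution. Writing out the explicit formulas for $p^{\pm}_1$ and summing/subtracting reduces this to $v_1 = v_1^2 + v_2^2$ together with a degeneracy condition whose only nearby solution branch has $v_1 = 1$ (i.e.\ $v = v_0$); the other branch sits at $v_1 = 1/4$, bounded away from $v_0$. Now assume for contradiction that $\gamma \not\equiv v_0$. Take $t_0$ to be the supremum of a connected component of $\gamma^{-1}(\{v_0\})$ containing $0$, set $(z_0, w_0) := (\delta_3(t_0), \delta_4(t_0))$, and by continuity of $\gamma$ and $\delta$ choose $t_1 > t_0$ close enough that $\gamma([t_0, t_1])$ lies in the local neighbourhood, $|(\delta_3(t), \delta_4(t)) - (z_0, w_0)| < 1/200$ on $[t_0, t_1]$, and $\gamma(t_1) \neq v_0$. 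The non-degeneracy fact forces the continuous function $\Psi(t) := \max\{|p^+_1(\gamma(t)) - 1/2|,\, |p^-_1(\gamma(t)) - 1/2|\}$ to satisfy $\Psi(t_0) = 0 < \Psi(t_1)$. Applying hypothesis (5) to $(z_0, w_0)$ with $\epsilon = \Psi(t_1)$ produces $r \in (0, \Psi(t_1))$ such that every $A_p$ with $|p_1 - 1/2| = r$ avoids the $(1/100)$-ball of $(z_0, w_0)$; the intermediate value theorem then yields $t^* \in (t_0, t_1)$ with $\Psi(t^*) = r$, so one of $p^{\pm}(\gamma(t^*))$ has first coordinate at distance exactly $r$ from $1/2$. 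The corresponding $A$-set therefore avoids the $(1/100)$-ball of $(z_0, w_0)$, contradicting the fact that $(\delta_3(t^*), \delta_4(t^*))$ lies in that $A$-set while being within $1/200$ of $(z_0, w_0)$.

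The main obstacle is the explicit algebra of the first paragraph together with the non-degeneracy verification of the third: one has to identify the four boundary points, pick continuous branches $p^{\pm}$, and verify cleanly that the only local solution of $p^+_1(v) = p^-_1(v) = 1/2$ is $v_0$. Once this algebraic groundwork is in place, the antipodal pinning of $\delta_1, \delta_2$ and the IVT contradiction are routine.
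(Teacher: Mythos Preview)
Your proof is correct and takes essentially the same route as the paper: pin $\delta_1=\delta_2=0$ via the antipodal pair on $C$, then use the intermediate value theorem on the first coordinate of an intersection point to hit a radius $r$ furnished by the hypothesis on $(A_p)$, contradicting the continuity of $\delta$.

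Two minor remarks. First, you invoke $A_{-p}=A_p$, which is \emph{not} a hypothesis of this lemma (it is property~2 of Lemma~\ref{lemma_cylinderproperties}, but Lemma~\ref{lemma_constructionworks} only assumes property~5). Fortunately it is not needed: the point $(p^+(\gamma(t)),0,0)+\delta(t)$ already lies in $K\cap C$ with $xy$-projection $p^+(\gamma(t))$, so $(\delta_3,\delta_4)\in A_{p^+(\gamma(t))}$ directly. Second, your algebraic non-degeneracy check (reducing to $4v_1^2-5v_1+1=0$, hence $v_1\in\{1,1/4\}$) is correct, but the paper's version is quicker: if both intersection points $v',v''$ on the $+1/2$ slice have first coordinate $1/2$, then $v',v''=(1/2,\pm\sqrt{3}/2,0,0)$, and summing $\langle\gamma(t),v'\rangle=\langle\gamma(t),v''\rangle=1/2$ gives $\gamma(t)_1=\langle\gamma(t),(1,0,0,0)\rangle=1$, forcing $\gamma(t)=v_0$.
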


We now prove Lemmas~\ref{lemma_cylinderproperties}, \ref{lemma_constructionexists} and \ref{lemma_constructionworks}.
\begin{proof}[Proof of Lemma~\ref{lemma_cylinderproperties}]
		Consider the following four sets in $\RR^2$:
		\begin{align*}
		T_1&=\{0\}\times [0,1],\\
		T_2&=[0,1]\times\{0\},\\
		T_3&=\{(z,w)\in\RR^2: z+w=1, 0\leq z,w\leq 1\},\\
		T&=\{(z,w)\in\RR^2: 0\leq z,w\leq 1, 0\leq z+w\leq 1\}.
		\end{align*}
		Given $(x,y)$ with $x^2+y^2=1$, we define $A_{x,y}$ as follows. Let $\min(|x-1/2|,|x+1/2|)=s$. If $s=0$, then $A_{x,y}=T$. Otherwise, let $k$ be the positive integer such that $1/2^k\geq s>1/2^{k+1}$. If $s=1/2^k$, then let $A_{x,y}=T$. Otherwise let $A_{x,y}=T_{k\textnormal{ mod }3}$.
		
		It is straightforward to check that each $A_p$ is convex and compact, and that properties 1, 2 and 3 are satisfied. To see that property 4 holds, observe that if $(p_n)\to p$ and $(t_n)\to t$ as above, then either $A_p=T$, or $A_{p_n}$ is eventually constant and equal to $A_p$. In either case, it is easy to deduce that $t\in A_p$.
		
		Finally, we show that property 5 holds. Given such $(z,w)$, we can find some $i\in\{1,2,3\}$ such that any point in $T_i$ has distance at least $1/100$ from $(z,w)$. Then we can find some $r\in (0,\epsilon)$ such that $1/2^k> r>1/2^{k+1}$ for some positive integer $k$ with $k\equiv i\textnormal{ mod $3$}$. It is easy to see that this $r$ satisfies the conditions.
\end{proof}

\begin{proof}[Proof of Lemma~\ref{lemma_constructionexists}]
	Observe that if $v\in \Sph^3$, the set $S_v$ intersects $C$ in $0$, $2$ or $4$ points:
	\begin{itemize}
		\item $S_v$ intersects $C$ in $0$ points if and only if $v_1^2+v_2^2<1/4$;
		\item $S_v$ intersects $C$ in a pair of points $v', -v'$ if and only if $v_1^2+v_2^2=1/4$;
		\item $S_v$ intersects $C$ in two pairs of (distinct) points $v', -v', v'', -v''$ if and only if $v_1^2+v_2^2>1/4$.
	\end{itemize}
	
	Let 
	\begin{align*}
	V_1&=\{v\in \Sph^3: v_1^2+v_2^2\geq 1/4\},\\
	V_2&=\{v\in \Sph^3: 1/100\leq v_1^2+v_2^2\leq 1/4\},\\
	V_3&=\{v\in \Sph^3: v_1^2+v_2^2\leq 1/100\}.
	\end{align*}
	
	For all $v\in V_1$, let $T_v=\bigcap_{w\in C\cap S_v}A_{w_1,w_2}=\bigcap_{p\in \Sph^1: (p,0,0)\in S_v}A_{p}$. Note that $C\cap S_v=\{v',-v',v'',-v''\}$ for some $v',v''\in \Sph^3$ (not necessarily distinct), so (using $A_{-p}=A_{p}$) we have $T_v=A_{v'_1,v'_2}\cap A_{v''_1,v''_2}$. In particular, $T_v\not =\emptyset$. Let $K_{1,v}=S_v+\{(0,0,t):t\in T_{v}\}$ and $$K_1=\bigcup_{v\in V_1}K_{1,v}.$$

	For all $v\in V_2$, let $p_v=\left(\frac{1/2}{\sqrt{v_1^2+v_2^2}}v_1,\frac{1/2}{\sqrt{v_1^2+v_2^2}}v_2, \frac{\sqrt{3}/2}{\sqrt{v_3^2+v_4^2}}v_3,\frac{\sqrt{3}/2}{\sqrt{v_3^2+v_4^2}}v_4\right)$. So $(p_v)_1^2+(p_v)_2^2=1/4$, $|p_v|=1$, and we have $p_v=v$ if $v_1^2+v_2^2=1/4$. Let $K_{2,v}=S_v+\{(0,0,t):t\in T_{p_v}\}
	$. (Note that $C\cap S_{p_v}=\{v',-v'\}$, where $v'=2((p_v)_1,(p_v)_2,0,0)$ and hence $T_{p_v}= A_{2(p_v)_1,2(p_v)_2}$.) Let
	$$K_2=\bigcup_{v\in V_2}K_{2,v}.$$
	
	For all $v\in V_3$, let $K_{3,v}=S_v$, and let 
	$$K_3=\bigcup_{v\in V_3}K_{3,v}.$$
	
	Finally, let $K_0=K_1\cup K_2\cup K_3$, and let $K$ be the convex hull of $K_0$.\medskip
	
	\textbf{Claim.} The set $K_0$ has the following properties.
	\begin{enumerate}
		\item For each $v\in \Sph^3$ there is some $w\in \RR^4$ such that $S_v+w\subseteq K_0$.
		\item We have $K_0\cap C\subseteq \{(p,t):p\in \Sph^1, t\in A_p\}$, and $K_0$ has no point $(x,y,z,w)$ with $x^2+y^2>1$.
		\item The set $K_0$ is compact.
	\end{enumerate}
Note that these properties are preserved when taking convex hull. So the claim above implies the statement of the lemma.\medskip
	
	\textbf{Proof of Claim.} The first property holds because $K_{i,v}$ contains a translate of $S_v$ if $v\in V_i$. To see that the second property holds, observe that $K_0$ is a union of sets of the form $S_v+(0,0,t)$ for some $t\in \RR^2$. It follows that $K_0$ has no point $(x,y,z,w)$ with $x^2+y^2>1$. Also, if $(p,t)\in K_0\cap C$ ($p\in \Sph^1, t\in \RR^2)$, then $(p,t)\in S_v+(0,0,t)$ for some $v\in \Sph^3$ having $v_1^2+v_2^2\geq 1/4$, and $t\in T_v$ and $(p,0,0)\in C\cap S_v$. But $(p,0,0)\in C\cap S_v$ implies $T_v\subseteq A_p$, so $t\in A_p$, as claimed. It is easy to see that $K_0$ is bounded, so the only property left to check is that $K_0$ is closed. It is enough to show that $K_1, K_2, K_3$ are all closed.
	
	We first show that $K_3$ is closed. Assume that $(q_n)$ is a sequence of points in $K_3$ with $(q_n)\to q$, we show that $q\in K_3$. We know $q_n\in S_{v(n)}$ for some $v(n)\in V_3$. By taking an appropriate subsequence, we may assume that $v(n)$ converges to some $v\in V_3$. It is easy to see that $q\in S_v$ must hold, so then $q\in K_3$.
	
	Next, we show that $K_2$ is closed. As before, assume that $(q_n)$ is a sequence of points in $K_2$ with $(q_n)\to q$. We have $q_n\in S_{v(n)}+(0,0,t_n)$ for some $v(n)\in V_2$ and $t_n\in T_{p_{v(n)}}=A_{2(p_{v(n)})_1,2(p_{v(n)})_2}$. By taking a subsequence, we may assume that $v(n)$ converges to some $v\in V_2$, and $(t_n)$ converges to some $t\in \RR^2$. Observe that $(p_{v(n)})\to p_v$. But then $t\in A_{2(p_v)_1,2(p_v)_2}=T_{p_v}$ and hence $q\in S_v+(0,0,t)\subseteq S_v+\{(0,0,t'):t'\in T_{p_v}\}$, so $q\in K_{2}$, as required.
	
	Finally, we show that $K_1$ is also closed. Again, assume that $(q_n)$ is a sequence of points in $K_1$ with $(q_n)\to q$. We have $q_n\in S_{v(n)}+(0,0,t_n)$ for some $v(n)\in V_1$ and $t_n\in T_{v(n)}$. As before, by taking a subsequence we may assume that $v(n)$ converges to some $v\in V_1$ and $t_n$ converges to some $t\in \RR^2$. We claim that this implies $t\in T_v$. Observe that $C\cap S_{v(n)}$ is of the form $\{v'(n),-v'(n),v''(n),-v''(n)\}$, where $v'(n)=\pm v''(n)$ if and only if $v(n)_1^2+v(n)_2^2=1/4$. So we have
	$$T_{v(n)}=A_{v'(n)_1,v'(n)_2}\cap A_{v''(n)_1,v''(n)_2}.$$
	
	By taking an appropriate subsequence, we may assume that $v'(n)$ converges to $v'$ and $v''(n)$ converges to $v''$, where $C\cap S_v=\{v',-v',v'',-v''\}$. But we have $t_n\in A_{v'(n)_1,v'(n)_2}$ for all $n$, and hence $t\in A_{v'_1,v'_2}$. Similarly, $t\in A_{v''_1,v''_2}$. Hence $t\in T_v$, as claimed. But then $$q\in S_v+(0,0,t)\subseteq S_v+\{(0,0,t'):t'\in T_v\}=K_{1,v}\subseteq K_1,$$ as claimed. This finishes the proof of the claim and hence the lemma.
\end{proof}

\begin{proof}[Proof of Lemma~\ref{lemma_constructionworks}]
		Assume, for contradiction, that $\gamma(t)\not=(1,0,0,0)$ for some $t$. We may assume that $\gamma(t)_1>9/10$ for all $t$, and that for all $t>0$ we have $\gamma(t)\not =(1,0,0,0)$. There are some continuous functions $v', v'': [0,1]\to C$ such that $S_{\gamma(t)}\cap C=\{v'(t),-v'(t),v''(t),-v''(t)\}$, $\langle \gamma(t), v'(t)\rangle=\langle \gamma(t),v''(t)\rangle=1/2$ and $v'(0),v''(0)=(1/2,\pm\sqrt{3}/2,0,0)$.
	
	Observe that if $\gamma(t)\not =(1,0,0,0)$ then $v'(t)_1\not=1/2$ or $v''(t)_1\not =1/2$. Indeed, we would have $v'(t),v''(t)=(1/2,\pm\sqrt{3}/2,0,0)$ and $1=\langle \gamma(t),v'(t)+v''(t)\rangle=\langle \gamma(t),(1,0,0,0)\rangle$, giving $\gamma(t)=(1,0,0,0)$. It follows that for all $t>0$, either $v'(t)_1\not=1/2$ or $v''(t)_1\not =1/2$.
	
	By continuity, there is some $\epsilon>0$ such that for all $t\leq\epsilon$ we have $|\delta(t)-\delta(0)|<1/100$. We know $v'(\epsilon)_1\not=1/2$ or $v''(\epsilon)_1\not =1/2$, we may assume by symmetry that $v'(\epsilon)_1\not=1/2$. By assumption, there is an $x_0$ lying between $1/2$ and $v'(\epsilon)_1$ such that whenever $p\in \Sph^1$ is of the form $p=(x_0,y_0)$ (for some $y_0$) then any point of $A_p$ is at least distance $1/100$ away from $(\delta(0)_3,\delta(0)_4)$. But, by continuity of $v'$, there is some $t_0\in [0,\epsilon]$ such that $v'(t_0)_1=x_0$. Observe that
	$$K\supseteq S_{\gamma(t_0)}+\delta(t_0)\supseteq \{v'(t_0),-v'(t_0)\}+\delta(t_0).$$
	But if $u,u'\in K$ with $u-u'=2(x,y,0,0)$ for some $x,y$ with $x^2+y^2=1$, then we must have $u,u'\in K\cap C$ and $u=(x,y,z,w)$, $u'=(-x,-y,z,w)$ for some $(z,w)\in A_{x,y}$. Hence $\delta(t_0)=(0,0,z,w)$ for some $(z,w)\in A_{v'(t_0)_1,v'(t_0)_2}$. But then $|\delta(t_0)-\delta(0)|>1/100$, giving a contradiction.
\end{proof}

\begin{proof}[Proof of Theorem~\ref{theorem_counterexamplereachable_introduction}]
	The result follows easily from Lemmas~\ref{lemma_cylinderproperties}, \ref{lemma_constructionexists} and \ref{lemma_constructionworks}.
\end{proof}

\section{Concluding remarks}\label{section_Kakeyaconcluding}

In this paper we answered Question~\ref{question_kakeya} and some related problems. However, there are still some open questions in this topic. For example, our counterexample in Theorem~\ref{theorem_counterexamplereachable_introduction} requires $d\geq 4$, whereas we know that there can be no $2$-dimensional counterexample (by Theorem~\ref{theorem_continuousin2d}). It would be interesting to see a counterexample in $3$ dimensions (we believe that such a construction should exist).

\begin{question}
Can we find convex bodies $S$ and $K$ in $\RR^3$ such that $S$ is $K$-Kakeya, but there are two $S$-copies in $K$ which cannot be rotated into each other within $K$?
\end{question}

Furthermore, we showed that if $S$ is a unit segment, then any two $S$ copies can be rotated into each other within a compact convex ($S$-)Kakeya set, but this fails for general bodies $S$. It would be interesting to determine if there are other sets $S$ (or families of such) for which this property holds. (A trivial example is given by closed balls.)

\begin{question}
	Can we find (compact, convex) sets $S$ in $\RR^d$ with $d\geq 3$ such that $S$ is not a segment or a ball, and whenever some convex body $K$ is $S$-Kakeya then any two $S$ copies can be rotated into each other within $K$?
\end{question}

\bibliography{Bibliography}
\bibliographystyle{abbrv}	

\end{document}